\newcommand{\safemath}[2]{\newcommand{#1}{\ensuremath{#2}\xspace}}
\safemath{\bma}{\mathbf{a}}
\safemath{\bmb}{\mathbf{b}}
\safemath{\bmc}{\mathbf{c}}
\safemath{\bmd}{\mathbf{d}}
\safemath{\bme}{\mathbf{e}}
\safemath{\bmf}{\mathbf{f}}
\safemath{\bmg}{\mathbf{g}}
\safemath{\bmh}{\mathbf{h}}
\safemath{\bmi}{\mathbf{i}}
\safemath{\bmj}{\mathbf{j}}
\safemath{\bmk}{\mathbf{k}}
\safemath{\bml}{\mathbf{l}}
\safemath{\bmm}{\mathbf{m}}
\safemath{\bmn}{\mathbf{n}}
\safemath{\bmo}{\mathbf{o}}
\safemath{\bmp}{\mathbf{p}}
\safemath{\bmq}{\mathbf{q}}
\safemath{\bmr}{\mathbf{r}}
\safemath{\bms}{\mathbf{s}}
\safemath{\bmt}{\mathbf{t}}
\safemath{\bmu}{\mathbf{u}}
\safemath{\bmv}{\mathbf{v}}
\safemath{\bmw}{\mathbf{w}}
\safemath{\bmx}{\mathbf{x}}
\safemath{\bmy}{\mathbf{y}}
\safemath{\bmz}{\mathbf{z}}
\safemath{\bmzero}{\mathbf{0}}
\safemath{\bmone}{\mathbf{1}}
\safemath{\bmpi}{\pmb{\pi}}
\safemath{\bmalpha}{\pmb{\alpha}}
\bmdefine{\biad}{a}
\bmdefine{\bibd}{b}
\bmdefine{\bicd}{c}
\bmdefine{\bidd}{d}
\bmdefine{\bied}{e}
\bmdefine{\bifd}{f}
\bmdefine{\bigd}{g}
\bmdefine{\bihd}{h}
\bmdefine{\biid}{i}
\bmdefine{\bijd}{j}
\bmdefine{\bikd}{k}
\bmdefine{\bild}{l}
\bmdefine{\bimd}{m}
\bmdefine{\bind}{n}
\bmdefine{\biod}{o}
\bmdefine{\bipd}{p}
\bmdefine{\biqd}{q}
\bmdefine{\bird}{r}
\bmdefine{\bisd}{s}
\bmdefine{\bitd}{t}
\bmdefine{\biud}{u}
\bmdefine{\bivd}{v}
\bmdefine{\biwd}{w}
\bmdefine{\bixd}{x}
\bmdefine{\biyd}{y}
\bmdefine{\bizd}{z}
\bmdefine{\bixid}{\xi}
\bmdefine{\bilambdad}{\lambda}
\bmdefine{\bimud}{\mu}
\bmdefine{\binud}{\nu}
\bmdefine{\bithetad}{\theta}
\bmdefine{\biomegad}{\omega}
\bmdefine{\biphid}{\phi}
\safemath{\bmia}{\biad}
\safemath{\bmib}{\bibd}
\safemath{\bmic}{\bicd}
\safemath{\bmid}{\bidd}
\safemath{\bmie}{\bied}
\safemath{\bmif}{\bifd}
\safemath{\bmig}{\bigd}
\safemath{\bmih}{\bihd}
\safemath{\bmii}{\biid}
\safemath{\bmij}{\bijd}
\safemath{\bmik}{\bikd}
\safemath{\bmil}{\bild}
\safemath{\bmim}{\bimd}
\safemath{\bmin}{\bind}
\safemath{\bmio}{\biod}
\safemath{\bmip}{\bipd}
\safemath{\bmiq}{\biqd}
\safemath{\bmir}{\bird}
\safemath{\bmis}{\bisd}
\safemath{\bmit}{\bitd}
\safemath{\bmiu}{\biud}
\safemath{\bmiv}{\bivd}
\safemath{\bmiw}{\biwd}
\safemath{\bmix}{\bixd}
\safemath{\bmiy}{\biyd}
\safemath{\bmiz}{\bizd}
\safemath{\bmxi}{\bixid}
\safemath{\bmlambda}{\bilambdad}
\safemath{\bmmu}{\bimud}
\safemath{\bmnu}{\binud}
\safemath{\bmtheta}{\bithetad}
\safemath{\bmomega}{\biomegad}
\safemath{\bmphi}{\biphid}
\safemath{\bA}{\mathbf{A}}
\safemath{\bB}{\mathbf{B}}
\safemath{\bC}{\mathbf{C}}
\safemath{\bD}{\mathbf{D}}
\safemath{\bE}{\mathbf{E}}
\safemath{\bF}{\mathbf{F}}
\safemath{\bG}{\mathbf{G}}
\safemath{\bH}{\mathbf{H}}
\safemath{\bI}{\mathbf{I}}
\safemath{\bJ}{\mathbf{J}}
\safemath{\bK}{\mathbf{K}}
\safemath{\bL}{\mathbf{L}}
\safemath{\bM}{\mathbf{M}}
\safemath{\bN}{\mathbf{N}}
\safemath{\bO}{\mathbf{O}}
\safemath{\bP}{\mathbf{P}}
\safemath{\bQ}{\mathbf{Q}}
\safemath{\bR}{\mathbf{R}}
\safemath{\bS}{\mathbf{S}}
\safemath{\bT}{\mathbf{T}}
\safemath{\bU}{\mathbf{U}}
\safemath{\bV}{\mathbf{V}}
\safemath{\bW}{\mathbf{W}}
\safemath{\bX}{\mathbf{X}}
\safemath{\bY}{\mathbf{Y}}
\safemath{\bZ}{\mathbf{Z}}
\safemath{\bZero}{\mathbf{0}}
\safemath{\bOne}{\mathbf{1}}
\safemath{\bDelta}{\mathbf{\Delta}}
\safemath{\bLambda}{\mathbf{\UpLambda}}
\safemath{\bPhi}{\mathbf{\Upphi}}
\safemath{\bSigma}{\mathbf{\Upsigma}}
\safemath{\bOmega}{\mathbf{\Upomega}}
\safemath{\bTheta}{\mathbf{\Uptheta}}
\bmdefine{\biAd}{A}
\bmdefine{\biBd}{B}
\bmdefine{\biCd}{C}
\bmdefine{\biDd}{D}
\bmdefine{\biEd}{E}
\bmdefine{\biFd}{F}
\bmdefine{\biGd}{G}
\bmdefine{\biHd}{H}
\bmdefine{\biId}{I}
\bmdefine{\biJd}{J}
\bmdefine{\biKd}{K}
\bmdefine{\biLd}{L}
\bmdefine{\biMd}{M}
\bmdefine{\biOd}{N}
\bmdefine{\biPd}{O}
\bmdefine{\biQd}{P}
\bmdefine{\biRd}{R}
\bmdefine{\biSd}{S}
\bmdefine{\biTd}{T}
\bmdefine{\biUd}{U}
\bmdefine{\biVd}{V}
\bmdefine{\biWd}{W}
\bmdefine{\biXd}{X}
\bmdefine{\biYd}{Y}
\bmdefine{\biZd}{Z}
\bmdefine{\biDelta}{\Delta}
\bmdefine{\biLambda}{\Lambda}
\bmdefine{\biPhi}{\Phi}
\bmdefine{\biSigma}{\Sigma}
\bmdefine{\biOmega}{\Omega}
\bmdefine{\biTheta}{\Theta}
\safemath{\bimA}{\biAd}
\safemath{\bimB}{\biBd}
\safemath{\bimC}{\biCd}
\safemath{\bimD}{\biDd}
\safemath{\bimE}{\biEd}
\safemath{\bimF}{\biFd}
\safemath{\bimG}{\biGd}
\safemath{\bimH}{\biHd}
\safemath{\bimI}{\biId}
\safemath{\bimJ}{\biJd}
\safemath{\bimK}{\biKd}
\safemath{\bimL}{\biLd}
\safemath{\bimM}{\biMd}
\safemath{\bimN}{\biNd}
\safemath{\bimO}{\biOd}
\safemath{\bimP}{\biPd}
\safemath{\bimQ}{\biQd}
\safemath{\bimR}{\biRd}
\safemath{\bimS}{\biSd}
\safemath{\bimT}{\biTd}
\safemath{\bimU}{\biUd}
\safemath{\bimV}{\biVd}
\safemath{\bimW}{\biWd}
\safemath{\bimX}{\biXd}
\safemath{\bimY}{\biYd}
\safemath{\bimZ}{\biZd}
\safemath{\bimDelta}{\biDelta}
\safemath{\bimLambda}{\biLambda}
\safemath{\bimPhi}{\biPhi}
\safemath{\bimSigma}{\biSigma}
\safemath{\bimOmega}{\biOmega}
\safemath{\bimTheta}{\biTheta}
\safemath{\setA}{\mathcal{A}}
\safemath{\setB}{\mathcal{B}}
\safemath{\setC}{\mathcal{C}}
\safemath{\setD}{\mathcal{D}}
\safemath{\setE}{\mathcal{E}}
\safemath{\setF}{\mathcal{F}}
\safemath{\setG}{\mathcal{G}}
\safemath{\setH}{\mathcal{H}}
\safemath{\setI}{\mathcal{I}}
\safemath{\setJ}{\mathcal{J}}
\safemath{\setK}{\mathcal{K}}
\safemath{\setL}{\mathcal{L}}
\safemath{\setM}{\mathcal{M}}
\safemath{\setN}{\mathcal{N}}
\safemath{\setO}{\mathcal{O}}
\safemath{\setP}{\mathcal{P}}
\safemath{\setQ}{\mathcal{Q}}
\safemath{\setR}{\mathcal{R}}
\safemath{\setS}{\mathcal{S}}
\safemath{\setT}{\mathcal{T}}
\safemath{\setU}{\mathcal{U}}
\safemath{\setV}{\mathcal{V}}
\safemath{\setW}{\mathcal{W}}
\safemath{\setX}{\mathcal{X}}
\safemath{\setY}{\mathcal{Y}}
\safemath{\setZ}{\mathcal{Z}}
\safemath{\emptySet}{\varnothing}
\safemath{\colA}{\mathscr{A}}
\safemath{\colB}{\mathscr{B}}
\safemath{\colC}{\mathscr{C}}
\safemath{\colD}{\mathscr{D}}
\safemath{\colE}{\mathscr{E}}
\safemath{\colF}{\mathscr{F}}
\safemath{\colG}{\mathscr{G}}
\safemath{\colH}{\mathscr{H}}
\safemath{\colI}{\mathscr{I}}
\safemath{\colJ}{\mathscr{J}}
\safemath{\colK}{\mathscr{K}}
\safemath{\colL}{\mathscr{L}}
\safemath{\colM}{\mathscr{M}}
\safemath{\colN}{\mathscr{N}}
\safemath{\colO}{\mathscr{O}}
\safemath{\colP}{\mathscr{P}}
\safemath{\colQ}{\mathscr{Q}}
\safemath{\colR}{\mathscr{R}}
\safemath{\colS}{\mathscr{S}}
\safemath{\colT}{\mathscr{T}}
\safemath{\colU}{\mathscr{U}}
\safemath{\colV}{\mathscr{V}}
\safemath{\colW}{\mathscr{W}}
\safemath{\colX}{\mathscr{X}}
\safemath{\colY}{\mathscr{Y}}
\safemath{\colZ}{\mathscr{Z}}
\safemath{\opA}{\mathbb{A}}
\safemath{\opB}{\mathbb{B}}
\safemath{\opC}{\mathbb{C}}
\safemath{\opD}{\mathbb{D}}
\safemath{\opE}{\mathbb{E}}
\safemath{\opF}{\mathbb{F}}
\safemath{\opG}{\mathbb{G}}
\safemath{\opH}{\mathbb{H}}
\safemath{\opI}{\mathbb{I}}
\safemath{\opJ}{\mathbb{J}}
\safemath{\opK}{\mathbb{K}}
\safemath{\opL}{\mathbb{L}}
\safemath{\opM}{\mathbb{M}}
\safemath{\opN}{\mathbb{N}}
\safemath{\opO}{\mathbb{O}}
\safemath{\opP}{\mathbb{P}}
\safemath{\opQ}{\mathbb{Q}}
\safemath{\opR}{\mathbb{R}}
\safemath{\opS}{\mathbb{S}}
\safemath{\opT}{\mathbb{T}}
\safemath{\opU}{\mathbb{U}}
\safemath{\opV}{\mathbb{V}}
\safemath{\opW}{\mathbb{W}}
\safemath{\opX}{\mathbb{X}}
\safemath{\opY}{\mathbb{Y}}
\safemath{\opZ}{\mathbb{Z}}
\safemath{\opZero}{\mathbb{O}}
\safemath{\identityop}{\opI}
\safemath{\veca}{\bma}
\safemath{\vecb}{\bmb}
\safemath{\vecc}{\bmc}
\safemath{\vecd}{\bmd}
\safemath{\vece}{\bme}
\safemath{\vecf}{\bmf}
\safemath{\vecg}{\bmg}
\safemath{\vech}{\bmh}
\safemath{\veci}{\bmi}
\safemath{\vecj}{\bmj}
\safemath{\veck}{\bmk}
\safemath{\vecl}{\bml}
\safemath{\vecm}{\bmm}
\safemath{\vecn}{\bmn}
\safemath{\veco}{\bmo}
\safemath{\vecp}{\bmmp}
\safemath{\vecq}{\bmq}
\safemath{\vecr}{\bmr}
\safemath{\vecs}{\bms}
\safemath{\vect}{\bmt}
\safemath{\vecu}{\bmu}
\safemath{\vecv}{\bmv}
\safemath{\vecw}{\bmw}
\safemath{\vecx}{\bmx}
\safemath{\vecy}{\bmy}
\safemath{\vecz}{\bmz}
\safemath{\veczero}{\bmzero}
\safemath{\vecone}{\bmone}
\safemath{\vecxi}{\bmxi}
\safemath{\veclambda}{\bmlambda}
\safemath{\vecmu}{\bmmu}
\safemath{\vecnu}{\bmnu}
\safemath{\vecomega}{\bmomega}
\safemath{\vectheta}{\bmtheta}
\safemath{\vecphi}{\bmphi}
\safemath{\vecpi}{\bmpi}
\safemath{\vecalpha}{\bmalpha}
\safemath{\matA}{\bA}
\safemath{\matB}{\bB}
\safemath{\matC}{\bC}
\safemath{\matD}{\bD}
\safemath{\matE}{\bE}
\safemath{\matF}{\bF}
\safemath{\matG}{\bG}
\safemath{\matH}{\bH}
\safemath{\matI}{\bI}
\safemath{\matJ}{\bJ}
\safemath{\matK}{\bK}
\safemath{\matL}{\bL}
\safemath{\matM}{\bM}
\safemath{\matN}{\bN}
\safemath{\matO}{\bO}
\safemath{\matP}{\bP}
\safemath{\matQ}{\bQ}
\safemath{\matR}{\bR}
\safemath{\matS}{\bS}
\safemath{\matT}{\bT}
\safemath{\matU}{\bU}
\safemath{\matV}{\bV}
\safemath{\matW}{\bW}
\safemath{\matX}{\bX}
\safemath{\matY}{\bY}
\safemath{\matZ}{\bZ}
\safemath{\matzero}{\bmzero}
\safemath{\matDelta}{\bDelta}
\safemath{\matLambda}{\bLambda}
\safemath{\matPhi}{\bPhi}
\safemath{\matSigma}{\bSigma}
\safemath{\matOmega}{\bOmega}
\safemath{\matTheta}{\bTheta}
\safemath{\matidentity}{\matI}
\safemath{\matone}{\matO}
\safemath{\rnda}{A}
\safemath{\rndb}{B}
\safemath{\rndc}{C}
\safemath{\rndd}{D}
\safemath{\rnde}{E}
\safemath{\rndf}{F}
\safemath{\rndg}{G}
\safemath{\rndh}{H}
\safemath{\rndi}{I}
\safemath{\rndj}{J}
\safemath{\rndk}{K}
\safemath{\rndl}{L}
\safemath{\rndm}{M}
\safemath{\rndn}{N}
\safemath{\rndo}{O}
\safemath{\rndp}{P}
\safemath{\rndq}{Q}
\safemath{\rndr}{R}
\safemath{\rnds}{S}
\safemath{\rndt}{T}
\safemath{\rndu}{U}
\safemath{\rndv}{V}
\safemath{\rndw}{W}
\safemath{\rndx}{X}
\safemath{\rndy}{Y}
\safemath{\rndz}{Z}
\safemath{\rveca}{\bimA}
\safemath{\rvecb}{\bimB}
\safemath{\rvecc}{\bimC}
\safemath{\rvecd}{\bimD}
\safemath{\rvece}{\bimE}
\safemath{\rvecf}{\bimF}
\safemath{\rvecg}{\bimG}
\safemath{\rvech}{\bimH}
\safemath{\rveci}{\bimI}
\safemath{\rvecj}{\bimJ}
\safemath{\rveck}{\bimK}
\safemath{\rvecl}{\bimL}
\safemath{\rvecm}{\bimM}
\safemath{\rvecn}{\bimN}
\safemath{\rveco}{\bomO}
\safemath{\rvecp}{\bimP}
\safemath{\rvecq}{\bimQ}
\safemath{\rvecr}{\bimR}
\safemath{\rvecs}{\bimS}
\safemath{\rvect}{\bimT}
\safemath{\rvecu}{\bimU}
\safemath{\rvecv}{\bimV}
\safemath{\rvecw}{\bimW}
\safemath{\rvecx}{\bimX}
\safemath{\rvecy}{\bimY}
\safemath{\rvecz}{\bimZ}
\safemath{\rvecxi}{\bmxi}
\safemath{\rveclambda}{\bmlambda}
\safemath{\rvecmu}{\bmmu}
\safemath{\rvectheta}{\bmtheta}
\safemath{\rvecphi}{\bmphi}
\safemath{\rmatA}{\bimA}
\safemath{\rmatB}{\bimB}
\safemath{\rmatC}{\bimC}
\safemath{\rmatD}{\bimD}
\safemath{\rmatE}{\bimE}
\safemath{\rmatF}{\bimF}
\safemath{\rmatG}{\bimG}
\safemath{\rmatH}{\bimH}
\safemath{\rmatI}{\bimI}
\safemath{\rmatJ}{\bimJ}
\safemath{\rmatK}{\bimK}
\safemath{\rmatL}{\bimL}
\safemath{\rmatM}{\bimM}
\safemath{\rmatN}{\bimN}
\safemath{\rmatO}{\bimO}
\safemath{\rmatP}{\bimP}
\safemath{\rmatQ}{\bimQ}
\safemath{\rmatR}{\bimR}
\safemath{\rmatS}{\bimS}
\safemath{\rmatT}{\bimT}
\safemath{\rmatU}{\bimU}
\safemath{\rmatV}{\bimV}
\safemath{\rmatW}{\bimW}
\safemath{\rmatX}{\bimX}
\safemath{\rmatY}{\bimY}
\safemath{\rmatZ}{\bimZ}
\safemath{\rmatDelta}{\bimDelta}
\safemath{\rmatLambda}{\bimLambda}
\safemath{\rmatPhi}{\bimPhi}
\safemath{\rmatSigma}{\bimSigma}
\safemath{\rmatOmega}{\bimOmega}
\safemath{\rmatTheta}{\bimTheta}
\newenvironment{textbmatrix}{	\setlength{\arraycolsep}{2.5pt}%
								\big[\begin{matrix}}{\end{matrix}\big]%
								\raisebox{0.08ex}{\vphantom{M}}}
\def\be{\begin{equation}}
\def\ee{\end{equation}}
\def\een{\nonumber \end{equation}}
\def\mat{\begin{bmatrix}}
\def\emat{\end{bmatrix}}
\def\btm{\begin{textbmatrix}}
\def\etm{\end{textbmatrix}}
\def\ba#1\ea{\begin{align}#1\end{align}}
\def\bas#1\eas{\begin{align*}#1\end{align*}}
\def\bs#1\es{\begin{split}#1\end{split}} 
\def\bg#1\eg{\begin{gather}#1\end{gather}} 
\def\bi#1\ei{\begin{itemize}#1\end{itemize}}
\DeclareMathOperator{\Prob}{\opP}			%
\DeclareMathOperator{\Exop}{\opE}			%
\safemath{\dirac}{\delta}					%
\safemath{\krond}{\dirac}					%
\safemath{\upto}{\uparrow}
\safemath{\downto}{\downarrow}
\safemath{\iu}{j}							%
\safemath{\ev}{\lambda}						%
\safemath{\hilseqspace}{l^{2}}				%
\newcommand{\banachfunspace}[1]{\setL^{#1}}	%
\safemath{\hilfunspace}{\banachfunspace{2}}	%
\safemath{\SNR}{\text{\sc snr}} 				%
\safemath{\No}{N_0}							%
\safemath{\Es}{E_s}							%
\safemath{\Eb}{E_b}							%
\safemath{\EbNo}{\frac{\Eb}{\No}}
\safemath{\EsNo}{\frac{\Es}{\No}}
\DeclareMathOperator{\CHop}{\ensuremath{\opH}} %
\safemath{\tvir}{\rndh_{\CHop}}				%
\safemath{\tvtf}{\rndl_{\CHop}}				%
\safemath{\spf}{\rnds_{\CHop}}				%
\safemath{\bff}{H_{\CHop}}					%
\safemath{\ircf}{r_{h}}						%
\safemath{\tftvcf}{r_{s}}					%
\safemath{\tfcf}{r_{l}}						%
\safemath{\bfcf}{r_{H}}						%
\safemath{\tcorr}{c_h}						%
\safemath{\scf}{c_{s}}						%
\safemath{\tfcorr}{c_{l}}					%
\safemath{\fcorr}{c_{H}}						%
\safemath{\mi}{I}							%
\safemath{\capacity}{C}						%
\safemath{\normal}{\mathcal{N}}			%
\safemath{\jpg}{\mathcal{CN}}			%
\safemath{\mchain}{\leftrightarrow}		%
\safemath{\dB}{\,\mathrm{dB}}
\safemath{\dBm}{\,\mathrm{dBm}}
\safemath{\Hz}{\,\mathrm{Hz}}
\safemath{\kHz}{\,\mathrm{kHz}}
\safemath{\MHz}{\,\mathrm{MHz}}
\safemath{\GHz}{\,\mathrm{GHz}}
\safemath{\s}{\,\mathrm{s}}
\safemath{\ms}{\,\mathrm{ms}}
\safemath{\mus}{\,\mathrm{\mu s}}
\safemath{\ns}{\,\mathrm{ns}}
\safemath{\meter}{\,\mathrm{m}}
\safemath{\mm}{\,\mathrm{mm}}
\safemath{\cm}{\,\mathrm{cm}}
\safemath{\m}{\,\mathrm{m}}
\safemath{\W}{\,\mathrm{W}}
\safemath{\J}{\,\mathrm{J}}
\safemath{\K}{\,\mathrm{K}}
\safemath{\bit}{\,\mathrm{bit}}
\safemath{\define}{=}			%
\safemath{\equivalent}{\sim}
\safemath{\distas}{\sim}					%
\newcommand{\AND}{\,\mathbf{and}\,}		%
\safemath{\sdiff}{\Delta}				%
\safemath{\reals}{\mathbb{R}}
\safemath{\positivereals}{\reals_{+}}
\safemath{\integers}{\mathbb{Z}}
\safemath{\posint}{\integers_{+}}
\safemath{\naturals}{\mathbb{N}}
\safemath{\posnaturals}{\naturals_{+}}
\safemath{\complexset}{\mathbb{C}}
\safemath{\rationals}{\mathbb{Q}}
\safemath{\vecF}{\mathbf{F}}
\title{
A Variational Inequality Approach to Independent Learning in Static Mean-Field Games
}
\author{\name Batuhan Yardim \email alibatuhan.yardim@inf.ethz.ch \\
     \addr Department of Computer Science\\
          ETH Z\"urich\\
          Z\"urich, Switzerland\\
     \AND
     \name Semih Cayci   \email cayci@mathc.rwth-aachen.de \\
     \addr Department of Mathematics\\
          RWTH Aachen University \\
          Aachen, Germany\\
     \AND
     \name Niao He   \email niao.he@inf.ethz.ch \\
     \addr Department of Computer Science\\
          ETH Z\"urich\\
          Z\"urich, Switzerland
}
\begin{document}

\maketitle

\begin{abstract}%
Competitive games involving thousands or even millions of players are prevalent in real-world contexts, such as transportation, communications, and computer networks. However, learning in these large-scale multi-agent environments presents a grand challenge, often referred to as the "curse of many agents". In this paper, we formalize and analyze the Static Mean-Field Game (SMFG) under both full and bandit feedback, offering a generic framework for modeling large population interactions while enabling independent learning.

We first establish close connections between SMFG and variational inequality (VI),  showing that SMFG can be framed as a VI problem in the infinite agent limit. Building on the VI perspective, we propose independent learning and exploration algorithms that efficiently converge to approximate Nash equilibria, when dealing with a finite number of agents.   Theoretically, we provide explicit finite sample complexity guarantees for independent learning across various feedback models in repeated play scenarios, assuming (strongly-)monotone payoffs. Numerically, we validate our results through both simulations and real-world applications in city traffic and network access management. 

\end{abstract}

\begin{keywords}
variational inequality, independent learning, mean-field games, multi-agent systems, bandit feedback
\end{keywords}

\section{Introduction}

Multi-agent reinforcement learning (MARL) has been an active area of research, with a vast range of successful applications in games such as Chess, Shogi \citep{silver2017mastering}, Go \citep{silver2018general}, Stratego \citep{perolat2022mastering}, as well as real-world applications for instance in robotics \citep{matignon2007hysteretic}.
However, the applications of MARL to games of a much larger scale involving thousands or millions of agents remain a theoretical and practical challenge \citep{wang2020breaking}.

Despite this limitation, competitive games with many players are ubiquitous and typically high-stakes.
In many real-world games, extremely large-scale competitive multi-agency is the rule rather than the exception: for instance, when commuting every morning between cities using infrastructure shared with millions of other commuters,
when periodically accessing an internet resource by querying servers used simultaneously by many users,
or when sending information over common communication channels.
Crucially, in such settings, learning can only feasibly occur in a \emph{decentralized manner} due to the massive scale of the problem. 
Each agent can only utilize their local observations (often in the form of partial/bandit feedback) to maximize their selfish utilities.
Another common feature in such games is \emph{statelessness}.
For instance, the capacity of an intercity highway or an internet server is only a function of its (and other highways'/servers') immediate load and not a function of time\footnote{Unless the timescale is years/decades, in which long-term degradation effects will become significant.}.
In our setting, we are interested in \emph{independent learning (IL) using repeated play} in such games, that is, algorithms where each agent learns from their own (noisy) interactions without observing others or a centralized coordinator.
IL, despite being theoretically challenging, is natural for such games as centralized control can be an unrealistic assumption for large populations.

\subsection{Game Formulation}\label{sec:game_initial_formulation}
We formalize the static mean-field game (SMFG), the main mathematical object of interest in this work.
The SMFG problem with repeated plays consists of the following.
\begin{itemize}
 \setlength\itemsep{0em}
    \item Finite set of players $\setN = \{ 1, \ldots, N\}$ with $|\setN| = N \in \mathbb{N}_{ > 0}$,
    \item Set of finitely many actions $\setA$, with $|\setA| = K \in \mathbb{N}_{ > 0}$, 
    \item A payoff function $\vecF: \Delta_\setA \rightarrow [0,1]^K$, which maps the occupancy measure of the population over actions to a corresponding payoff in $[0,1]$ for each action. 
\end{itemize}
Here $\Delta_\setA$ denotes the probability simplex over a finite set $\setA$. For $\vecmu\in\Delta_\setA$, $a\in\setA$, we denote the entry of $\vecF(\vecmu)\in\mathbb{R}^{\setA}$  corresponding to action $a$ as  $\vecF(\vecmu, a) = \vecF(\vecmu)(a) \in\mathbb{R}$.

SMFG is played across multiple rounds where players are allowed to alter their strategies in between observations, where at each round $t\in \mathbb{N}_{\geq 0}$,
\begin{enumerate} 
 \setlength\itemsep{0em}
    \item Each player $i \in \setN$ picks an action $a_t^i \in \setA$,
    \item The empirical occupancy measure over actions is set to be $\widehat{\vecmu}_t := \frac{1}{N} \sum_{i=1}^N \vece_{a_t^i}$ 
    \item Depending on the feedback model, each agent $i \in \setN$ observes noisy rewards:
    
    \textbf{Full feedback: } The (noisy) payoffs for \emph{each} action $\vecr^i_t := \vecF(\widehat{\vecmu}_t) + \vecn_t^i \in \mathbb{R}^{K}$ or,
    
    \textbf{Bandit feedback: } The (noisy) payoff for their chosen action $r^i_t := \vecr^i_t(a_t^i) \in \mathbb{R}$.
    \item Each agent receives the payoff $r^i_t$.
\end{enumerate}
Here $\vece_a \in \mathbb{R}^\setA$ is the standard unit vector with coordinate $a$ set to 1, and we assume for each action $a\in\setA$ the noise  $\vecn_t^i(a)$ are independent for all $i \in \setN, t \geq 0$ and have zero mean and variance upper bounded by $\sigma^2$.
Intuitively, SMFG models games where the payoff obtained from choosing an action depends on how the population is statistically distributed over actions, without distinguishing between particular players.
Hence, each agent observes a noisy version of the payoff of their action (or the payoff of all actions with full feedback) under the current empirical occupancy over actions.

We motivate the relevance of the SMFG framework with three examples from real-world applications.
\begin{enumerate}
    \item \textbf{Network resource management.}
Assume there are $K$ resources (e.g., servers or computational nodes) available on a computer network shared by a large number of (say, $N \gg K$) users.
These resources might have varying capabilities, hence the delay in accessing each will be a non-linear function of the overall load.
At each time step, each user tries to access a resource and experiences a delay that increases with the number of users trying to access the same node.
The Tor network application is a particularly good example of SMFGs: the network has 2-3 million users and functions by each user choosing an entry node to use in a decentralized fashion \citep{tormainpage}.

    \item \textbf{Repeated commuting with large populations.}
Every morning, $N$ commuters from city A to city B choose among $K$ routes to drive to their target (typically $N \gg K$), observing only how long it takes them to commute.
The distribution of choices among the population affects how much time each person spends traveling.
The commute times in modern road infrastructure can be a very complex function of overall load \citep{hoogendoorn2013traffic} due to non-trivial feedback loops and adaptive systems such as load-dependent traffic lights.
On the other hand, city traffic is a prime example of finite resources (with potentially complex payoff functions) shared by thousands or millions of inhabitants, especially during heavy commute hours \citep{ambuhl2017empirical}. 
    The empirical study of macroscopic flow diagrams \citep{geroliminis2008existence, ambuhl2017empirical} suggests that at least in aggregate, city traffic flow is explained by its momentary occupancy.
    This empirical observation motivates modeling periodic commutes as a stateless game.
\item \textbf{Multi-player multi-armed bandits with soft collisions.}
Multiplayer MAB have already been studied in the special case where collisions (i.e., multiple players choosing the same arm) result in zero returns.
In many real-world applications, arms used by multiple players yield diminished (but non-zero) utilities when occupied by multiple players.
For instance, in many radio communications applications (Bluetooth, Wi-Fi), common frequencies are automatically used via \emph{time-sharing}, yielding delayed but successful communication when collisions occur.

\end{enumerate}

\subsection{Learning Objectives}
We consider competitive games, that is, each agent aims to maximize their personal expected payoff without regard to social welfare.
We allow agents to play randomized actions (mixed strategies), where each agent $i$ randomly chooses their actions at time $t$ with respect to their mixed strategy $\vecpi_t^i \in \Delta_\setA$.
The primary solution concept for such a game is the Nash equilibrium (NE).

\begin{definition}[Expected payoff, exploitability, Nash equilibrium]\label{def:main_def_nplayer}
For an $N$-tuple of mixed strategies $(\vecpi^1, \ldots, \vecpi^N) \in \Delta_\setA^N$, we define the expected payoff $V^i$ of an agent $i \in \setN$ as
\begin{align*}
    V^i(\vecpi^1, \ldots, \vecpi^N) := \Exop \left[ \vecF\left( \frac{1}{N} \sum_{j=1}^N \vece_{a^j}\right)(a^i) \middle|
a^j \sim \vecpi^j, \forall j = 1, \ldots, N
\right].
\end{align*}
We define the \emph{exploitability} of agent $i$ for the tuple as $\setE^i_{\text{exp}}(\{\vecpi^j\}_{j=1}^N) := \max_{\vecpi'\in \Delta_\setA} V^i(\vecpi', \vecpi^{-i}) - V^i(\vecpi^1, \ldots, \vecpi^N)$. An $N$-tuple $(\vecpi^1, \ldots, \vecpi^N) \in \Delta_\setA^N$ is called a NE if for all $i$, $\setE^i_{\text{exp}}(\{\vecpi^j\}_{j=1}^N)=0$, namely, 
 $V^i(\vecpi^1, \ldots, \vecpi^N) = \max_{\vecpi\in \Delta_\setA} V^i(\vecpi, \vecpi^{-i}),$
and  it is called a $\delta$-NE ($\delta>0$) if 
for all $i$, $\setE^i_{\text{exp}}(\{\vecpi^j\}_{j=1}^N)\leq\delta$, namely, $V^i(\vecpi^1, \ldots, \vecpi^N) \geq \max_{\vecpi\in \Delta_\setA} V^i(\vecpi, \vecpi^{-i}) - \delta$.

\end{definition}

Intuitively, under a mixed strategy profile $(\vecpi^1, \ldots, \vecpi^N)$ that is a Nash equilibrium, no agent has the incentive to deviate from their mixed strategy as in expectation each agent plays optimally with respect to the rest.
A Nash equilibrium of the SMFG can be shown to always exist in this setting without any further assumptions \citep{nash1950non}, as the SMFG is also an $N$-player normal form game when $\setA$ is finite.

\textbf{Goal.}
With the SMFG problem definition and the solution concept introduced, we state our objective: in both feedback models, we would like to find \emph{sample efficient} algorithms\footnote{We rigorously formalize the notion of an algorithm in the different feedback models in the appendix (Section~\ref{section:alg_formalization}).} that learn an \emph{approximate NE} (in the sense of low exploitability) \emph{independently} from repeated plays by $N$ agents when $N$ is \emph{large}.

We will solve the SMFG in the regime when $N$ is very large by leveraging its connection to the corresponding problem in the infinite-player limit, i.e., $N\rightarrow\infty$. 
For this purpose, we introduce the following MF-NE concept.
\begin{definition}[MF-NE]
A mean-field Nash equilibrium (MF-NE) $\vecpi^* \in \Delta_\setA$ associated with payoff operator $\vecF$ is a probability distribution over actions such that $\sum_a \vecpi^*(a) \vecF(\vecpi^*, a) = \max_{\vecpi \in \Delta_\setA} \sum_a \vecpi(a) \vecF(\vecpi^*, a)$.
If it holds that $\sum_a \vecpi^*(a) \vecF(\vecpi^*, a) \geq \max_{\vecpi \in \Delta_\setA} \sum_a \vecpi(a) \vecF(\vecpi^*, a) - \delta$ for some $\delta > 0$, we call $\vecpi^*$ a $\delta$-MF-NE.
\end{definition}
Intuitively, the mean-field limit simplifies the SMFG problem by assuming each agent follows the same policy and replacing the notion of $N$ independent agents with a single agent playing against a continuum of infinitely many identical agents.
Notably, finding an MF-NE is equivalent to solving the following variational inequality (VI) problem corresponding to the operator $\vecF$ and domain $\Delta_\setA$:
\begin{align}\label{eq:mfg_vi_statement}
    \text{Find } \vecpi^* \in \Delta_\setA \text{ s.t. } \vecF(\vecpi^*)^\top (\vecpi^* - \vecpi) \geq 0, \forall \vecpi\in \Delta_\setA. \tag{MF-VI}
\end{align}
The $\delta$-MF-NE is related to the strong gap function of \eqref{eq:mfg_vi_statement}, as $\vecpi^*$ is a $\delta$-MF-NE if and only if $\min _{\vecpi \in \Delta_\setA}\vecF(\vecpi^*)^\top (\vecpi^* - \vecpi) \geq -\delta$.
While the MF-VI is not a concrete game and MF-NE is not strictly speaking a Nash equilibrium of any game, they will be crucial tools in constructing independent learning (IL) algorithms for SMFG.

\subsection{Summary of Contributions}
Based on the VI perspective, we develop algorithms that efficiently converge to approximate NE solutions for the SMFG problem, providing finite-sample, finite-agent, and IL guarantees under different feedback models.
Our key technical contributions are as follows:
\begin{enumerate}
    \item \textbf{VI formulation and approximation.} We formulate the infinite-agent mean-field game limit as a VI.
    We make connections between MF-NE, VIs with regularization, and NE of the SMFG, providing explicit bounds on the bias introduced due to approximating the $N$-agent game with an infinite player game.
    In particular, we show that a solution of the \eqref{eq:mfg_vi_statement} is a $\mathcal{O}(\sfrac{1}{\sqrt{N}})$-NE of the $N$-player game, thereby justifying the usefulness of the approximation \eqref{eq:mfg_vi_statement} when $N$ is large.
    \item \textbf{Independent learning with $N$ agents and full feedback.} Leveraging techniques from optimization and VIs, we analyze independent learning in the $N$-agent setting and prove finite sample bounds with regularized learning. Our work highlights how VI and operator theory can provide insights into IL with full or partial feedback.
    In each feedback model, we propose a $\tau$-Tikhonov regularization scheme that stabilizes the divergence between agent policies without communication, which introduces a $\mathcal{O}(\tau)$ bias.
   When agents have full (though noisy) feedback of reward payoffs, we show that after $T$ rounds of play, our algorithm produces policies with expected exploitability upper bounded by $\mathcal{O} (\frac{\tau^{-2}}{\sqrt{T}}+ \frac{\tau^{-1}}{\sqrt{N}} + \tau)$ with monotone payoffs. 
    For $\lambda$-strongly monotone payoffs, we improve this to $\mathcal{O} (\frac{\tau^{-\sfrac{3}{2}} \lambda^{-\sfrac{1}{2}}}{\sqrt{T}} + \frac{\tau^{-\sfrac{1}{2}} \lambda^{-\sfrac{1}{2}}}{\sqrt{N}} + \tau)$.
    \item \textbf{Learning guarantees with bandit feedback.} We further analyze the learning algorithm under a bandit feedback model and prove finite sample guarantees.
    In this case, we propose a probabilistic exploration scheme that enables agents to obtain low-variance estimates of the payoff operator $\vecF$ without centralized coordination.
    In this scenario, we show that after $T$ rounds of play in expectation the exploitability in the \emph{unregularized} SMFG is bounded by
    $\widetilde{\mathcal{O}}(\frac{N^{\sfrac{3}{4}}}{\sqrt{T}} + \frac{1}{\sqrt[4]{N}})$ for monotone payoffs and by
    $\widetilde{\mathcal{O}}(\frac{N^{\sfrac{3}{4}} \lambda^{-\sfrac{1}{2}}}{\sqrt{T}} + \frac{\lambda^{-\sfrac{1}{2}}}{\sqrt[3]{N}})$ for $\lambda$-strongly monotone payoffs.
    \item \textbf{Empirical validation.} We verify our theoretical results through synthetic and real-world experiments such as the beach bar problem, linear payoffs, city traffic management, and network access within the Tor network.
    These experiments demonstrate the effectiveness of the SMFG formulation and our IL algorithms for addressing games with large populations. 
\end{enumerate}

\subsection{Related Work and Comparisons}
This work is situated at the intersection of multiple areas of research, and we discuss each.

\textbf{Mean-field games.}
Mean-field games (MFG), originally proposed independently by \citet{lasry2007mean} and \citet{huang2006large}, have been an active research area in multi-agent RL literature.
MFG is a useful theoretical tool for analyzing a specific class of MARL problems consisting of a large number of players with symmetric (but competitive) interests by formulating the infinite-agent limit.
The MFG formalism has been used successfully to model large-scale problems such as
auctions \citep{pinasco2023learning},
electricity markets \citep{gomes2021mean},
epidemics \citep{aurell2022finite, huang2022game}, 
and carbon markets \citep{carmona2022mean}.

In this work, we analyze a learning problem in a static (or stateless) finite-action MFG.
Closer to our setting, discrete-time dynamic MFGs have been analyzed under various assumptions, reward models, and solution concepts, such as Lasry-Lions games \citep{perrin2020fictitious,perolat2022scaling}, stationary MFG \citep{anahtarci2022q,xie2021learning,zaman2023oracle, yardim2023policy}, linear quadratic MFG \citep{guo2022entropy, fu2019actor}, and MFGs on graphs \citep{yang2017learning,gu2021mean}.
However, finite-sample learning guarantees for mean-field games only exist under specific assumptions.
For Lasry-Lions games, convergence guarantees exist with an infinite agent oracle \citep{perrin2020fictitious, zhang2024learning}, while IL guarantees with finite agents in this setting do not exist to the best of our knowledge.
Recently, theoretical learning guarantees for centralized learning on monotone MFGs with graphon structure \citep{zhang2024learning} and potential MFGs \citep{geist2021concave} have been shown.
IL in stationary MFGs has been studied either under a subjective equilibrium solution concept \citep{yongacoglu2022independent} or with strong regularization bias and  poor sample complexity \citep{yardim2023policy}.

\textbf{Variational inequalities (VI).}
VIs and relevant algorithms have been an active area of research in classical and recent optimization literature; see e.g., \citet{nemirovski2004prox, facchinei2003finite, nesterov2007dual,lin2022perseus, kotsalis2022simple}, just to list a few.
VIs have been extensively analyzed in the case of (strongly) monotone operators.
Classical algorithms in monotone VIs include the projected gradient method \citep{facchinei2003finite}, the extragradient method \citep{korpelevich1976extragradient}, and proximal point algorithms \citep{rockafellar1976monotone}.
Several works provide extensions to more general operator classes such as generalized monotone operators \citep{kotsalis2022simple} or pseudo-monotone operators \citep{karamardian1976complementarity}.
A recent survey for methods for solving VIs can be found in \citet{beznosikov2023smooth}.
In this work, we use VIs to formalize the infinite-player limit and use algorithmic tools from VI literature to analyze the more challenging IL setting.
Our algorithms are adaptations of the projected gradient method to the decentralized learning under partial feedback.
However, in the SMFG setting, it is not possible to have oracle access to the VI operator (or even unbiased samples of it), posing an algorithmic and theoretical design challenge.
Strictly speaking, the VI given by \eqref{eq:mfg_vi_statement} exists only as an abstract approximation in our work and not as an oracle model.

\textbf{Decentralized VI algorithms.}
Another setting that is related but orthogonal to our problem is the distributed/decentralized VI problem studied for example by \citet{srivastava2011distributed, mukherjee2020decentralized, kovalev2022optimal}.
In this setting, the VI operator is assumed to decompose into $M$ components $\vecF := \sum_{m=1}^M \vecF_m$ each of which can be evaluated locally by $M$ workers.
Workers are permitted to occasionally transmit information along a communication graph: the communication complexity of algorithms as well as the computational complexity is of interest.
Our setting fundamentally differs from the decentralized VI problem. 
Firstly, the VI operator in our case exists only implicitly as an approximation of the finite-player game dynamics: strictly speaking, the problem becomes a VI only at the infinite-player limit.
Furthermore, the VI operator $\vecF$ in our setting does not admit components that can be independently evaluated by any player due to the game interactions: our stochastic oracle is much more restrictive.
Finally, the independent players cannot communicate at all throughout the repeated plays, introducing the algorithmic challenge of fully decentralized learning.

\textbf{Multi-player MAB (MMAB).}
Our model is related to the MMAB problem.
The standard reward model for MMAB is collisions \citep{anandkumar2011distributed}, where agents receive a reward of 0 if more than one pulls the same arm.
Results have focused on the so-called no-communications setting, proving regret guarantees for the cooperative setting without a coordinator \citep{avner2014concurrent,rosenski2016multi, bubeck2021cooperative}.
Our SMFG model can capture much more general reward models than binary collisions.
Moreover, in our competitive setting, our primary metric is exploitability or proximity to NE rather than regret as typically employed in the collaborative setting of MMAB (similar to the NE for MMAB result established by \citet{lugosi2022multiplayer}).

\textbf{Other work.}
A setting that has a similar set of keywords is mean-field bandits \citep{gummadi2013mean,wang2021mean}, as mentioned in the survey by \citet{lauriere2022learning}.
However, these models do not analyze a \emph{competitive (Nash)} equilibrium, but rather a population steady-state reached by an infinite population under prescribed unknown agent behavior.
In these works, optimality or exploitability is not a concern, hence a direct comparison with our work is not possible (differences detailed in appendix, Section~\ref{sec:detailed_comparison}).
On the other hand, our model is equivalent to decentralized learning on static MFGs \citep{lauriere2022learning} with finitely many agents and partial feedback.
The problem of IL is fundamental in algorithmic game theory and has been separately investigated in zero-sum Markov games \citep{daskalakis2020independent, sayin2021decentralized, ozdaglar2021independent} and potential games \citep{ding2022independent, heliou2017learning,alatur2024independent}.
Another related literature is population games and evolutionary dynamics \citep{sandholm2015population, quijano2017role}, where competitive populations are analyzed with differential equations.
While solution concepts overlap, we are interested in IL with repeated play under partial feedback, not the continuous-time dynamic system.

\subsection{Organization and Notations}
The paper is structured as follows.  In Section~\ref{sec:assumptions_examples}, we introduce our assumptions on the SMFG operator and discuss their theoretical and practical relevance.
In Section~\ref{sec:theoretical_tool}, we theoretically analyze the SMFG at the limit $N\rightarrow\infty$ and make connections to variational inequalities in optimization literature.
In Sections~\ref{sec:expert_feedback_results} and \ref{sec:bandit_feedback_results}, we formulate our algorithms and present the finite-time analysis of \emph{independent learning} in full and bandit feedback cases.
Finally, in Section~\ref{sec:experiments_main}, we experimentally verify our theory in numerical examples and two real-world use cases in city traffic and access to the Tor network.

\textbf{Notation.}
For any $M\in\mathbb{N}_{> 0}$, let $[M] := \{ 1, \ldots, M\}$. 
$\Delta_\setA$ denotes the probability simplex over a finite set $\setA$, and $\Delta_{\setA, N} \subset \Delta_\setA$ is the set of possible empirical measures with $N$ samples, that is, $\Delta_{\setA, N} := \{ \vecu = \{u_i\}_i \in\Delta_\setA | N u_i \in \mathbb{N}_{\geq 0} \}$.
For a set $\setA$ and a map $\vecF: \setX\rightarrow\mathbb{R}^{\setA}$ defined on an arbitrary set $\setX$, for $x\in\setX$, $a\in\setA$, we denote the entry of $\vecF(x)\in\mathbb{R}^{\setA}$ corresponding to $a$ as $\vecF(x, a) \in\mathbb{R}$.
For a vector $\vecu\in\mathbb{R}^{\setA}$, $\vecu(a) \in\mathbb{R}$ denotes its entry corresponding to $a\in\setA$.
$\vecone_K \in \mathbb{R}^K$ denotes a vector with all entries $1$.
For $N$-tuple $\vecv \in \setX^N$ and $u, v \in \setX$, $(v, \vecv^{-i})$ is the $N$-tuple with the $i$-th entry of $\vecv$ replaced by $v$ and $(u,v, \vecv^{-i, -j})$ is the $N$-tuple with the $i$-th and $j$-th entries of $\vecv$ replaced by $u,v$ respectively.
$\vece_a \in \mathbb{R}^\setA$ is the standard unit vector with coordinate $a$ set to 1.
$\mathbb{M}^{D_1,D_2}$ denotes the set of $D_1 \times D_2$ matrices, $\mathbb{S}_{++}^D$ denotes the set of positive definite $D\times D$ symmetric matrices. 
For a compact convex set $K \subset \mathbb{R}^H$, $\Pi_K: \mathbb{R}^H \rightarrow K$ denotes the projection operator.
For an event (i.e. measurable set) $E$ in a generic probability space $(\Omega, \setF, \mathbb{P})$, we denote the its complement $\Omega \setminus E$ as $\overline{E}$.

\section{Assumptions on Payoffs and Examples}\label{sec:assumptions_examples}

In general, finding a NE in normal form games is known to 
be challenging and possibly computationally intractable~\citep{papadimitriou1994complexity, daskalakis2009complexity}. This difficulty persists even in the mean-field regime \citep{yardim2024mean}.
Therefore, we will analyze SMFG under certain structural assumptions on $\vecF$ that are closely aligned with many real-world applications.

\subsection{Assumption on Lipschitz continuity}
Our first assumption is the Lipschitz continuity of the payoff.
In practice, action payoffs should not change drastically with small variations in population behavior: Assumption~\ref{ass:lipschitz} formalizes this intuition.

\begin{assumption}[Lipschitz continuous payoffs]\label{ass:lipschitz}
The payoff map $\vecF: \Delta_\setA \rightarrow [0,1]^K$ is called Lipschitz continuous with parameter $L > 0$ if for any $\vecmu, \vecmu' \in \Delta_\setA$, $\| \vecF(\vecmu) - \vecF(\vecmu')\|_2 \leq L \| \vecmu - \vecmu' \|_2$.
\end{assumption}

As a direct consequence of the Lipschitz continuous payoffs, we have the following technical lemmas, which will be used in the subsequent analysis. 
\begin{lemma}\label{lemma:technical_bound_1N}
For any policy profile $(\vecpi^1, \ldots, \vecpi^N) \in \Delta_\setA^N$, it holds that
 \begin{align*}
| V^i(\vecpi^1, \ldots, \vecpi^N) - \vecpi^{i, \top} \Exop \left[ \vecF(\widehat{\vecmu}) \middle| \substack{
a^j \sim \vecpi^j, \forall j \\
\widehat{\vecmu} := \frac{1}{N} \sum_{j=1}^N \vece_{a^j}} \right] | 
\leq 
\frac{L\sqrt{2}}{N}.
 \end{align*}
\end{lemma}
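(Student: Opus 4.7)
The plan is to use a one-sample coupling argument that isolates the dependence of $\widehat{\vecmu}$ on $a^i$. On an enlarged probability space I would introduce an independent copy $\tilde{a}^i \sim \vecpi^i$ of $a^i$ that is independent of $(a^j)_{j=1}^N$, and define the auxiliary empirical measure
\begin{align*}
\widetilde{\vecmu} := \tfrac{1}{N} \vece_{\tilde{a}^i} + \tfrac{1}{N}\sum_{j \ne i} \vece_{a^j}.
\end{align*}
Two properties drive the argument: (a) $\widetilde{\vecmu}$ has the same joint distribution with $(a^j)_{j\ne i}$ as $\widehat{\vecmu}$ does, so $\Exop[\vecF(\widetilde{\vecmu})] = \Exop[\vecF(\widehat{\vecmu})]$; and (b) $\widetilde{\vecmu}$ is independent of $a^i$, since it depends only on $\tilde{a}^i$ and $(a^j)_{j\ne i}$. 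Conditioning on $\widetilde{\vecmu}$ and using $a^i \sim \vecpi^i$, property (b) lets me factor
\begin{align*}
\Exop[\vecF(\widetilde{\vecmu})(a^i)] = \Exop\bigl[\textstyle\sum_a \vecpi^i(a)\,\vecF(\widetilde{\vecmu})(a)\bigr] = \vecpi^{i,\top} \Exop[\vecF(\widehat{\vecmu})].
\end{align*}

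With these two identities, the target $| V^i(\vecpi^1, \ldots, \vecpi^N) - \vecpi^{i,\top} \Exop[\vecF(\widehat{\vecmu})]|$ collapses to $\bigl|\Exop[\vecF(\widehat{\vecmu})(a^i) - \vecF(\widetilde{\vecmu})(a^i)]\bigr|$. I would then bound this by Jensen's inequality, the elementary comparison $|v(a)| \le \|v\|_\infty \le \|v\|_2$ valid for every $v \in \mathbb{R}^\setA$, and the Lipschitz continuity of $\vecF$ (Assumption~\ref{ass:lipschitz}):
\begin{align*}
\bigl|\Exop[\vecF(\widehat{\vecmu})(a^i) - \vecF(\widetilde{\vecmu})(a^i)]\bigr| \le L\,\Exop\|\widehat{\vecmu} - \widetilde{\vecmu}\|_2 = \tfrac{L}{N}\,\Exop\|\vece_{a^i} - \vece_{\tilde{a}^i}\|_2.
\end{align*}
Since $\|\vece_{a^i} - \vece_{\tilde{a}^i}\|_2$ equals $\sqrt{2}$ when $a^i \ne \tilde{a}^i$ and $0$ otherwise, this expectation is at most $\sqrt{2}$, which delivers the claimed bound $L\sqrt{2}/N$.

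I do not anticipate any serious obstacle; the only modest subtlety is setting up the coupling on a sufficiently rich probability space (a product extension suffices). The $\sqrt{2}$ constant is tight in the sense that it matches the $\ell_2$ diameter between two distinct standard basis vectors of $\mathbb{R}^\setA$, so this coupling essentially saturates the naive Lipschitz bound that one could hope to obtain by swapping out a single player's action.
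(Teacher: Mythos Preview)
Your proposal is correct and follows essentially the same coupling argument as the paper: both introduce an independent copy of $a^i$ (the paper calls it $\bar{a}^i$, you call it $\tilde{a}^i$), form the corresponding perturbed empirical measure, use independence to factor out $\vecpi^{i,\top}$, and then bound the residual via Lipschitz continuity and $\|\vece_{a^i}-\vece_{\tilde{a}^i}\|_2\le\sqrt{2}$. The only cosmetic difference is that the paper bounds the residual via Cauchy--Schwarz with $\|\vece_{a^i}\|_2=1$ whereas you use $|v(a)|\le\|v\|_2$, which is equivalent.
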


In addition, we can also obtain the Lipschitz continuity of $V^i$ and $\setE^i_{\text{exp}}$, as stated below. 

\begin{lemma}[$V^i, \setE^i_{\text{exp}}$ are Lipschitz]\label{lemma:phi_lipschitz}
For any $i\in \setN$, the value functions $V^i:\Delta_\setA^N \rightarrow \mathbb{R}$ and exploitability functions $\setE^i_{\text{exp}}: \Delta_\setA^N \rightarrow \mathbb{R}$ are Lipschitz continuous, that is, for any $(\vecpi^1, \ldots, \vecpi^N) \in \Delta_\setA^N, \vecpi,\bar{\vecpi} \in \Delta_\setA$,
\begin{align*}
| V^i(\vecpi, \vecpi^{-j}) - V^i(\bar{\vecpi}, \vecpi^{-j}) | \leq &L_{j,i} \| \vecpi - \bar{\vecpi}\|_2, \\
    | \setE^i_{\text{exp}}(\vecpi, \vecpi^{-j}) - \setE^i_{\text{exp}}(\bar{\vecpi}, \vecpi^{-j}) | \leq &\bar{L}_{j,i} \| \vecpi - \bar{\vecpi}\|_2,
\end{align*}
with the Lipschitz moduli given by
\begin{align*}
    L_{i,j} = \begin{cases}
\sqrt{K}, \text{if } i = j, \\
\frac{2L\sqrt{2K}}{N}, \text{if } i\neq j,
\end{cases}
\qquad 
\bar{L}_{i,j} = \begin{cases}
\sqrt{K}, \text{if } i = j, \\
\frac{4L\sqrt{2K}}{N}, \text{if } i\neq j.
\end{cases}
\end{align*}
\end{lemma}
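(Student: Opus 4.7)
The plan is to prove the Lipschitz bounds for $V^i$ first and then derive the bounds for $\setE^i_{\text{exp}}$ as a short corollary via the standard inequality $|\max f - \max g|\leq \max|f-g|$. In each case I separate the sub-case where the perturbed player is the ego agent ($i=j$) from the sub-case where it is a different agent ($i\neq j$). The first is an elementary linearity calculation, while the second is where the mean-field $O(1/N)$ scaling must actually be extracted.

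For $V^i$ with $i=j$, condition on $a^k\sim\vecpi^k$ for all $k\neq i$ and rewrite
\[
V^i(\vecpi,\vecpi^{-i}) \;=\; \sum_{a\in\setA} \vecpi(a)\,h(a), \qquad h(a)\;:=\;\Exop\bigl[\vecF(\widehat{\vecmu})(a)\,\bigl|\,a^i=a,\,a^k\sim\vecpi^k\text{ for }k\neq i\bigr],
\]
so $V^i$ is linear in $\vecpi$ with gradient $h\in[0,1]^K$. Cauchy--Schwarz then gives $|V^i(\vecpi,\vecpi^{-i})-V^i(\bar\vecpi,\vecpi^{-i})|\leq\|h\|_2\|\vecpi-\bar\vecpi\|_2\leq\sqrt K\,\|\vecpi-\bar\vecpi\|_2$, which is $L_{i,i}=\sqrt K$.

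For $V^i$ with $i\neq j$, I condition on $a^j$ instead: writing $\widehat{\vecmu}^{a^*}$ for the (random) empirical measure obtained when agent $j$ plays $a^*$ while the actions of all other agents are coupled across the two configurations,
\[
V^i(\vecpi,\vecpi^{-j}) - V^i(\bar\vecpi,\vecpi^{-j}) \;=\; \sum_{a^*\in\setA}\bigl(\vecpi(a^*)-\bar\vecpi(a^*)\bigr)\,g(a^*), \qquad g(a^*)\;:=\;\Exop\bigl[\vecF(\widehat{\vecmu}^{a^*})(a^i)\bigr].
\]
The crucial point is that $g$ varies slowly in $a^*$: since $\widehat{\vecmu}^{a^*}-\widehat{\vecmu}^{a^*{}'}=\tfrac{1}{N}(\vece_{a^*}-\vece_{a^*{}'})$ has $\ell_2$-norm at most $\sqrt{2}/N$, Assumption~\ref{ass:lipschitz} combined with Jensen gives $|g(a^*)-g(a^*{}')|\leq L\sqrt{2}/N$. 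I then exploit $\sum_{a^*}(\vecpi(a^*)-\bar\vecpi(a^*))=0$ to recenter $g$ about any fixed $a_0$ before applying Cauchy--Schwarz, which yields a Lipschitz modulus of order $L\sqrt{2K}/N$, matching the claim up to an absolute constant. The precise factor $2$ in the statement is obtained by instead first invoking Lemma~\ref{lemma:technical_bound_1N} to approximate $V^i$ by its mean-field linearization $\vecpi^{i,\top}\Exop[\vecF(\widehat{\vecmu})]$, bounding the cross-difference with $\|\vecpi^i\|_2\leq 1$, and absorbing the $O(1/N)$ linearization error into the Lipschitz constant by using $\|\vecpi-\bar\vecpi\|_2\leq\sqrt 2$.

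To pass from $V^i$ to $\setE^i_{\text{exp}}$, observe that when $i=j$ the inner maximum $\max_{\vecpi'}V^i(\vecpi',\vecpi^{-i})$ is independent of the perturbed $\vecpi$ (since $\vecpi'$ overwrites agent $i$'s own slot anyway), so the exploitability difference collapses to the $V^i$ difference, giving $\bar L_{i,i}=\sqrt K$. When $i\neq j$, I split the exploitability difference into the difference of the two maxima and the difference of the two baseline values, apply $|\max f -\max g|\leq \max|f-g|$ to the former, and note that each of the two resulting pieces is a perturbation of $V^i$ in agent $j$'s slot with $i\neq j$, hence is bounded by $L_{j,i}\|\vecpi-\bar\vecpi\|_2$; summing gives $\bar L_{j,i}=2L_{j,i}$. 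The main obstacle in the whole argument is the $i\neq j$ case for $V^i$: a direct $\|g\|_2\leq\sqrt K$ bound would yield only the constant $\sqrt K$ and miss the $1/N$ mean-field saving, and the recentering trick is essential to convert the global bound $\vecF\in[0,1]^K$ into a bound on the \emph{variation} of $g$, which is what ultimately scales like $1/N$ through Assumption~\ref{ass:lipschitz}.
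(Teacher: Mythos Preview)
Your approach is essentially the same as the paper's: for $i=j$, linearity in $\vecpi$ plus Cauchy--Schwarz; for $i\neq j$, the recentering trick exploiting $\sum_a(\vecpi(a)-\bar\vecpi(a))=0$ together with the fact that changing agent $j$'s action perturbs $\widehat\vecmu$ by at most $\sqrt{2}/N$ in $\ell_2$; and for $\setE^i_{\text{exp}}$, the standard $|\max f-\max g|\leq\max|f-g|$ reduction. The only cosmetic difference is that the paper applies the recentering pointwise (for each realization of the other agents' actions) before averaging, whereas you recenter the conditional expectation $g$; both versions are valid.

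One remark: your recentering argument already yields the modulus $L\sqrt{2K}/N$, which is in fact tighter than the stated $2L\sqrt{2K}/N$ (the paper's proof simply carries a loose factor). Your proposed alternative route via Lemma~\ref{lemma:technical_bound_1N} to recover ``the precise factor $2$'' does not work as described: that lemma gives an \emph{additive} error $O(1/N)$ independent of $\|\vecpi-\bar\vecpi\|_2$, and an additive error cannot be absorbed into a Lipschitz constant by invoking an \emph{upper} bound on $\|\vecpi-\bar\vecpi\|_2$ --- you would need a lower bound, which of course fails as $\vecpi\to\bar\vecpi$. This detour is unnecessary anyway, since the recentering argument already delivers the claim with room to spare.
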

We defer the proofs to Sections~\ref{app:technical_lemma} and \ref{sec:extended_proof_lema_lipschitz_phi} in the appendix.

\subsection{Assumption on Monotonicity}
The next fundamental assumption, monotonicity, is standard in variational inequality literature \citep{facchinei2003finite} and is similar in form to Lasry-Lions conditions in mean-field games literature (initially studied by \cite{lasry2007mean}, later analyzed with reinforcement learning in MFGs by \cite{perrin2020fictitious, perolat2022scaling} and many others).
\begin{assumption}[Monotone/Strongly monotone payoff]
\label{ass:monotone}
The vector map $\vecF: \Delta_\setA \rightarrow [0,1]^K$ is called monotone if for some $\lambda \geq 0$, for all $\forall \vecmu_1, \vecmu_2 \in \Delta_\setA$, it holds that
\begin{align*}
     \left(\vecF(\vecmu_1) - \vecF(\vecmu_2)\right)^\top (\vecmu_1 - \vecmu_2) &= \sum_{a\in\setA} (\vecmu_1(a) - \vecmu_2(a) ) ( \vecF(\vecmu_1, a) - \vecF(\vecmu_2, a) ) \\
     &\leq - \lambda \| \vecmu_1 - \vecmu_2 \|_2^2.
\end{align*}
Furthermore, if the above holds for some $\lambda > 0$, $\vecF$ is called $\lambda$-strongly monotone.
\end{assumption}
Monotone payoffs, as in the case of Lasry-Lions conditions, can be intuitively thought of as modeling problems where the payoff of an action on average decreases as the occupancy increases.
In comparison to \cite{lasry2007mean, perrin2020fictitious}, since the game is stateless in our case, the monotonicity condition degenerates to ``decreasing payoffs with increasing \emph{action occupancy}''.

While Assumption~\ref{ass:monotone} is abstract, one large class of concrete monotone payoff functions are given by games where actions have payoffs non-increasing with their occupancy and there is no interaction between actions, demonstrated below by Example~\ref{ex:decreasing}.

\begin{example}[Non-increasing payoffs]\label{ex:decreasing}
Let $F_a:[0,1] \rightarrow [0,1]$ for $a\in\setA$ be Lipschitz continuous and non-increasing functions, define $\vecF(\vecmu) := \sum_a F_a(\vecmu(a)) \vece_a$.
Then $\vecF$ is monotone and Lipschitz.
If $F_a$ is also \emph{strictly} decreasing and there exists $\lambda > 0$ such that $|F_a(x) - F_a(x')| \geq \lambda |x-x'|$ for all $x,x'\in[0,1], a \in \setA$, then $\vecF$ is $\lambda$-strongly monotone.
\end{example}
Applications where such an $F_a$ exists are common and intuitive: non-increasing $F_a$ models congestion effects on common resources.
Even theoretically, the existence of a non-increasing $F_a$ in wireless communications can be motivated by well-known results on the capacity of multiple access channels in information theory \citep{el1980multiple, skwirzynski1981new}.
Likewise, the conceptual models of traffic flow (e.g., see the macroscopic flow diagram \citep{loder2019understanding}) imply traffic \emph{speed} is monotonically decreasing with occupancy in aggregate.
As a special case of Example~\ref{ex:decreasing}, the classical multi-player multi-armed bandits setting with collisions yields a monotone Lipschitz continuous payoff.
We state this explicitly in the following example. 

\begin{example}[Multi-player MAB with Collisions]
For the $N$-player game, for each action $a\in\setA$, take the functions $F_{col}^a: [0,1] \rightarrow [0,1]$ such that
\begin{align*}
    F_{col}^a(x) = \begin{cases}
        \alpha^a , \text{if } x \leq \frac{1}{N}, \\
        \alpha^a N (\frac{2}{N} - x), \text{if } \frac{1}{N} \leq x \leq \frac{2}{N}, \\
        0, \text{if } x \geq \frac{2}{N}.
    \end{cases}
\end{align*}
where $\alpha^a \in [0,1]$ is the expected payoff of the action $a$ when no collision occurs.
Take the payoff map $\vecF(\vecmu) := \sum_{a\in\setA} F_{col}^a(\mu_a) \vece_a$.
The payoff map $\vecF$ is Lipschitz continuous and monotone and corresponds to the classical multi-player MAB with collisions. 

Due to this fact, one interpretation of our SMFG setting is that we allow soft collisions for action payoffs, as actions may yield non-zero but diminished payoffs when chosen by multiple players.
In fact, a model with hard collisions would not be feasible (or realistic) in applications where $N \gg K$ as any policy supported by all actions would lead to collisions with probability approaching $1$.
On the other hand, a general non-increasing function $F_{col}^a$ as in Example~\ref{ex:decreasing} can model more realistic use cases where collisions happen almost surely due to a large number of players/users.
\end{example}

Before concluding the discussion of monotonicity of payoffs, we also compare our payoff assumptions with two other common game-theoretical settings -- potential games\footnote{Not to be confused with $\vecF$ that admits a potential $\Phi$ such that $\vecF = \nabla \Phi$, also called a gradient field.} \citep{monderer1996potential} and their subset,  congestion games \citep{rosenthal1973class}, for which  algorithms with IL guarantees already exist \citep{leonardos2021global}. 
We show that SMFGs with monotone payoffs are not special cases of either of these settings. Specifically, we provide a counter-example of SMFG that does not admit a game potential.

\begin{remark}[SMFG is not a potential game]
The SMFG is a \emph{potential game} if there exists a map $P: \setA^N \rightarrow \mathbb{R}$ such that for all $i\in\setN, \veca = (a_1, \ldots, a_N) \in \setA^N, a\in\setA$, it holds that
\begin{align}\label{eq:potgame}
    V^{i}(a^i, \veca^{-i}) - V^{i}(a, \veca^{-i}) = P(a^i, \veca^{-i}) - P(a, \veca^{-i}),
\end{align}
where $V^{i}(\veca)$ denotes the expected reward of player $i$ when each player $j \in \setN$ (deterministically) plays action $a^j$.
Note that $V^{i}(\veca) = \vecF(\widehat{\vecmu}(\veca))(a^i)$ where $\widehat{\vecmu}(\veca) \in \Delta_\setA$ is the empirical distribution of actions over actions induced by action profile $\veca$.

We provide an example of an SMFG where no such $P$ exists.
Let $\matS \in \mathbb{S}_{++}^{D}$ be a symmetric positive definite matrix, $\vecb\in\mathbb{R}^D$ be an arbitrary vector, and $\matX \in \mathbb{M}^{D, D}$ be a anti-symmetric matrix such that $\matX = -\matX^\top$.
Take the payoff operator
\begin{align*}
    \vecF(\vecmu) = (-\matS + \matX) \vecmu + \vecb,
\end{align*}
which can be trivially shown to be monotone.
In particular, take $N=3, K=3, \setA = \{ a_1, a_2, a_3\}$, and define the reward $\vecF$ as
\begin{align*}
    \vecF \begin{pmatrix}
\mu_1 \\
\mu_2 \\
\mu_3
\end{pmatrix} = \begin{pmatrix}
-\mu_1 - \mu_2 \\
\mu_1 \\
-\mu_3
\end{pmatrix},
\end{align*}
which is monotone.
Assume that a potential $P$ exists and satisfies Equation~(\ref{eq:potgame}).
Then, it would follow that:
\begin{align*}
    V^1(a_2, a_1, a_1) - V^1(a_3, a_1, a_1) &= P(a_2, a_1, a_1) - P(a_3, a_1, a_1), \\
    V^2(a_3, a_1, a_1) - V^2(a_3, a_2, a_1) &= P(a_3, a_1, a_1) - P(a_3, a_2, a_1), \\
    V^1(a_3, a_2, a_1) - V^1(a_2, a_2, a_1) &= P(a_3, a_2, a_1) - P(a_2, a_2, a_1), \\
    V^2(a_2, a_2, a_1) - V^2(a_2, a_1, a_1) &= P(a_2, a_2, a_1) - P(a_2, a_1, a_1) .
\end{align*}
Hence, adding the inequalities above, we have that
\begin{align*}
    0 = & V^1(a_2, a_1, a_1) - V^1(a_3, a_1, a_1)
        + V^2(a_3, a_1, a_1) - V^2(a_3, a_2, a_1) \\
        & + V^1(a_3, a_2, a_1) - V^1(a_2, a_2, a_1)  
        + V^2(a_2, a_2, a_1) - V^2(a_2, a_1, a_1) \\
     = & \vecF((\sfrac{2}{3}, \sfrac{1}{3}, 0), a_2) - \vecF((\sfrac{2}{3}, 0, \sfrac{1}{3}), a_3) +
     \vecF((\sfrac{2}{3}, 0, \sfrac{1}{3}), a_1) - \vecF((\sfrac{1}{3}, \sfrac{1}{3}, \sfrac{1}{3}), a_2) \\
     & + \vecF((\sfrac{1}{3}, \sfrac{1}{3}, \sfrac{1}{3}), a_3) - \vecF((\sfrac{1}{3}, \sfrac{2}{3}, 0), a_2)
     + \vecF((\sfrac{1}{3},\sfrac{2}{3},0), a_2) - \vecF((\sfrac{2}{3},\sfrac{1}{3},0), a_1) \\
     = &  \sfrac{2}{3} -  (-\sfrac{1}{3}) 
     + (-\sfrac{2}{3}) - \sfrac{1}{3} 
     + (-\sfrac{1}{3}) - \sfrac{1}{3}
     + \sfrac{1}{3} - (-1) 
     \neq  0,
\end{align*}
leading to a contradiction.
As a result,  no such potential $P$ could exist, and this (monotone) SMFG cannot be a potential game.
\end{remark}

\subsection{Assumption on Stochastic Noise}
Finally, our results require the standard assumption of independent noise with bounded variance. 
We formalize this below in Assumption~\ref{ass:noise}. 
\begin{assumption}[Independent, bounded variance noise]
\label{ass:noise}
We assume that the payoff noise is zero mean, has entrywise variance upper bounded by $\sigma^2$, and is independent.
That is, we assume the following hold:
\begin{enumerate}
    \item For all $t \geq 0, i \in \setN$ and $a\in \setA$, it holds that $\Exop[ \vecn_t^i(a) ] = 0$ and $\Exop[ \vecn_t^i(a)^2 ] \leq \sigma^2$, 
    \item $\left\{\vecn_t^i(a)\right\}_{i\in  \setN, t \geq 0, a \in \setA}$ are independent random variables,
    \item For any $t \geq 0, i\in \setN$, the random vector $\vecn_t^i$ is independent from $\{a_{t'}^j\}_{t' \leq t, j \in \setN}$ (i.e., actions taken up to step $t$ by all players).
\end{enumerate}
\end{assumption}

The assumption of independent noise with bounded variance is common in bandits literature \citep{anandkumar2011distributed, avner2014concurrent, bubeck2021cooperative}.
We emphasize that the noise vectors $\vecn_t^i$ have entry-wise bounded variance by $\sigma^2$ in Assumption~\ref{ass:noise}.
In optimization or variational inequality settings, the assumption of independent sources of noise in stochastic oracles is also standard, although typically a bound on $\Exop[\| \vecn_t^i \|_2^2]$ is assumed \citep{  juditsky2011solving}.

\section{The VI Approximation as $N\rightarrow\infty$}\label{sec:theoretical_tool}

Our first set of theoretical results presented in this chapter makes the connection between the NE and a solution of \eqref{eq:mfg_vi_statement} explicit.
We will show that solutions of \eqref{eq:mfg_vi_statement} form good approximations of the true NE in the $N$-player game if $N$ is large.

\begin{remark}[Existence and Uniqueness of MF-NE]
\label{remark:vi_existence}
Let $\vecF:\Delta_\setA \rightarrow [0,1]^K$ be a continuous function.
Then $\vecF$ has at least one MF-NE $\vecpi^*$, and the set of MF-NE is compact.
Furthermore, if $\vecF$ is also $\lambda$-strongly monotone for some $\lambda > 0$, then the MF-NE is unique.
This can be seen as follows.
The MF-NE corresponds to solutions of the VI: $\forall \vecpi \in \Delta_\setA, \vecF(\vecpi^*)^\top (\vecpi^* - \vecpi) \geq 0$.
The domain set $\Delta_\setA$ is compact and convex, and the assumption that $\vecF$ is continuous yields the existence of a solution using Corollary~2.2.5 of \cite{facchinei2003finite}.
For uniqueness in the case of strong monotonicity, see Theorem~2.3.3 of \cite{facchinei2003finite}.
\end{remark}

The following theorem shows that the solution of \eqref{eq:mfg_vi_statement}, when deployed by all players, is a $\mathcal{O}\left(\sfrac{1}{\sqrt{N}}\right)$ solution of the $N$-player game.
Therefore, the MF-NE solution will be an arbitrarily good approximation of the true NE when $N\rightarrow\infty$, and the bias introduced by studying the $N$-player game can be explicitly quantified.

\begin{theorem}\label{theorem:mfg_ne}
    Let $\vecF$ be $L$-Lipschitz, $\delta\geq 0$ arbitrary, and let $\vecpi^*$ be a $\delta$-MF-NE.
Then, the strategy profile $(\vecpi^*, \ldots, \vecpi^*) \in \Delta_\setA^N$ is a $\mathcal{O}\left(\delta + \frac{L}{\sqrt{N}}\right)$-NE of the $N$-player SMFG. 
\end{theorem}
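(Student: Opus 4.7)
The plan is to bound the exploitability $\setE^i_{\text{exp}}(\vecpi^*,\ldots,\vecpi^*)$ directly by reducing the $N$-player expected payoffs to the mean-field payoff $\vecF(\vecpi^*)$ up to $\mathcal{O}(1/\sqrt{N})$ error, then applying the $\delta$-MF-NE property.

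\textbf{Step 1: Reduction via Lemma~\ref{lemma:technical_bound_1N}.} First I would apply Lemma~\ref{lemma:technical_bound_1N} to rewrite, up to an additive $\mathcal{O}(L/N)$ term, both the value $V^i(\vecpi^*, \vecpi^{*,-i})$ and the values $V^i(\vecpi', \vecpi^{*,-i})$ appearing in the max, as $\vecpi^{i,\top} \Exop[\vecF(\widehat{\vecmu})]$ where $\widehat{\vecmu}$ is the empirical measure formed by sampling each $a^j$ from the corresponding player's mixed strategy. This converts the problem to bounding $\Exop[\vecF(\widehat{\vecmu})] - \vecF(\vecpi^*)$ in norm.

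\textbf{Step 2: Concentration of the empirical measure.} The hard ingredient is the bound $\Exop\|\widehat{\vecmu} - \vecpi^*\|_2 \leq \mathcal{O}(1/\sqrt{N})$. If everyone plays $\vecpi^*$, then $\Exop[\widehat{\vecmu}]=\vecpi^*$ and coordinatewise variances satisfy $\sum_a \mathrm{Var}(\widehat{\mu}(a)) \leq 1/N$, so Jensen gives $\Exop\|\widehat{\vecmu}-\vecpi^*\|_2 \leq 1/\sqrt{N}$. When player $i$ deviates to $\vecpi'$, the expected measure shifts by only $\mathcal{O}(1/N)$ and the variance bound is essentially unchanged, so the same $\mathcal{O}(1/\sqrt{N})$ bound holds. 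Combining with the $L$-Lipschitz property of $\vecF$ and Jensen's inequality yields $\|\Exop[\vecF(\widehat{\vecmu})] - \vecF(\vecpi^*)\|_2 \leq L\cdot\Exop\|\widehat{\vecmu}-\vecpi^*\|_2 \leq \mathcal{O}(L/\sqrt{N})$ uniformly over admissible deviations.

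\textbf{Step 3: Invoke the $\delta$-MF-NE property.} After Steps 1--2, I would have
\begin{align*}
\setE^i_{\text{exp}}(\vecpi^*,\ldots,\vecpi^*) \;\leq\; \max_{\vecpi'\in\Delta_\setA} \vecpi'^{\,\top}\vecF(\vecpi^*) - \vecpi^{*,\top}\vecF(\vecpi^*) + \mathcal{O}\!\left(\tfrac{L}{\sqrt{N}}\right),
\end{align*}
where the extra $\mathcal{O}(L/N)$ from Lemma~\ref{lemma:technical_bound_1N} is absorbed. The definition of $\delta$-MF-NE bounds the first two terms by $\delta$, yielding $\setE^i_{\text{exp}}(\vecpi^*,\ldots,\vecpi^*) \leq \delta + \mathcal{O}(L/\sqrt{N})$ uniformly in $i$, which is the claim.

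\textbf{Main obstacle.} The only nontrivial step is controlling $\Exop\|\widehat{\vecmu}-\vecpi^*\|_2$ when player $i$ deviates, since the joint distribution is no longer a clean multinomial. This is handled either by conditioning on $a^i$ and applying the symmetric argument to the remaining $N-1$ i.i.d.\ samples, or by noting that one-sample changes to the empirical measure contribute at most $\mathcal{O}(1/N)$ to both the mean and the norm, which is negligible at the target rate $1/\sqrt{N}$. Everything else is bookkeeping with Lipschitz constants.
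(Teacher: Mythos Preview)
Your proposal is correct and follows essentially the same approach as the paper's proof: reduce both $V^i(\vecpi^*,\vecpi^{*,-i})$ and $V^i(\vecpi',\vecpi^{*,-i})$ to inner products with $\vecF(\vecpi^*)$ via Lemma~\ref{lemma:technical_bound_1N}, control $\|\Exop[\vecF(\widehat{\vecmu})]-\vecF(\vecpi^*)\|_2$ through the variance bound $\Exop\|\widehat{\vecmu}-\vecpi^*\|_2^2\leq \mathcal{O}(1/N)$ and Lipschitz continuity, then invoke the $\delta$-MF-NE inequality. The paper handles your ``main obstacle'' exactly as you suggest, by passing through the shifted mean $\frac{N-1}{N}\vecpi^*+\frac{1}{N}\vecpi'$ and using Lipschitz continuity to absorb the $\mathcal{O}(1/N)$ shift.
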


\begin{proof}
Firstly, define the independent random variables $a^j \sim \vecpi^*$ for all $j\in\setN$ for a $\delta$-MF-NE $\vecpi^* \in \Delta_\setA$.
Define the random variable $\widehat{\vecmu} := \sfrac{1}{N} \sum_{j=1}^N \vece_{a^j}$, which is the empirical distribution of players over actions in a single round of an SMFG.
The proof will proceed by formally proving that if $N$ is large enough, then $\Exop[\vecF(\widehat{\vecmu})] \approx \vecF(\vecpi^*)$ and $a^j$ is approximately independent from $\vecF(\widehat{\vecmu})$.

It is straightforward that $\Exop \left[ \widehat{\vecmu}  \right] = \vecpi^*$.
Furthermore, by independence of the random vectors $\vece_{a^j}$, we have
\begin{align*}
    \Exop \left[ \left\| \widehat{\vecmu} - \vecpi^* \right\|_2  \right] \leq
    &\sqrt{\Exop \Big[ \Big\| \frac{1}{N} \sum_{j=1}^N \vece_{a^j} - \vecpi^* \Big\|_2^2  \Big]} 
    \leq  \sqrt{\frac{1}{N^2} \sum_{j=1}^N \Exop \left[  \left\| \vece_{a^j} - \vecpi^* \right\|_2^2 \right]} \leq \frac{2}{\sqrt{N}}.
\end{align*}
Hence, as $\vecF$ is $L$-Lipschitz, we have that
\begin{align}\label{eq:theorem1:ineq2}
\|\Exop[\vecF(\widehat{\vecmu})|a_j\sim \vecpi^*] - \vecF(\vecpi^*)\|_2 \leq \Exop[\|\vecF(\widehat{\vecmu}) - \vecF(\vecpi^*) \|_2] \leq \frac{2L}{\sqrt{N}}.
\end{align}

Now let $i\in \setN$ be arbitrary, and let $\vecpi' \in \Delta_\setA$ be any distribution over actions that satisfies $V^i(\vecpi', \vecpi^{*, -i}) = \max_{\vecpi} V^i(\vecpi, \vecpi^{*, -i})$.
We also define the quantities
\begin{align*}
    \overline{\vecF}_1 = \Exop\left[\vecF(\widehat{\vecmu}) \middle| a^j \sim \vecpi^*, \forall j\in\setN \right], \qquad
    \overline{\vecF}_2 = \Exop\left[\vecF(\widehat{\vecmu}) \middle| a^j \sim \vecpi^* \text{ for } \forall i \neq j, \quad a^i \sim \vecpi' \right].
\end{align*}
We will bound $V^i(\vecpi', \vecpi^{*, -i}) - V^i(\vecpi^*, \vecpi^{*, -i})$.
Combining Lemma~\ref{lemma:technical_bound_1N} and the inequality \eqref{eq:theorem1:ineq2}, we observe
\begin{align*}
    |V^i(\vecpi', \vecpi^{*, -i}) - \vecpi'^\top\vecF(\vecpi^*)| \leq & |V^i(\vecpi', \vecpi^{*, -i}) -  \vecpi'^\top \overline{\vecF}_2 |
    + \Big|\vecpi'^\top \overline{\vecF}_2 - \vecpi'^\top\vecF\Big(\frac{N-1}{N}\vecpi^* + \frac{1}{N} \vecpi'\Big)\Big| \\
    &+ \Big|\vecpi'^\top\vecF\Big(\frac{N-1}{N}\vecpi^* + \frac{1}{N} \vecpi'\Big) - \vecpi'^\top\vecF(\vecpi^*)\Big| \\
    \leq & \frac{L\sqrt{2}}{N} + \frac{2L}{\sqrt{N}} + \frac{2L}{N},
\end{align*}
since $\vecF$ is $L$-Lipschitz.
Likewise, using Lemma~\ref{lemma:technical_bound_1N} once again, we have
\begin{align*}
    |V^i(\vecpi^*, \vecpi^{*, -i}) - \vecpi^{*,\top}\vecF(\vecpi^*)| &\leq |V^i(\vecpi^*, \vecpi^{*, -i}) - \vecpi^{*,\top}\overline{\vecF}_1| + |\vecpi^{*,\top} \overline{\vecF}_1 - \vecpi^{*,\top}\vecF(\vecpi^*)|  \\
    &\leq \frac{L\sqrt{2}}{N} + \frac{2L}{\sqrt{N}}.
\end{align*}
Finally, using the definition of a $\delta$-MF-NE, it holds that
\begin{align*}
V^i(\vecpi', \vecpi^{*, -i}) - V^i(\vecpi^*, \vecpi^{*, -i}) \leq &\vecF(\vecpi^*)^\top (\vecpi' - \vecpi^*) +|V^i(\vecpi^*, \vecpi^{*, -i}) -  \vecpi^{*,\top}\vecF(\vecpi^*)| \\
    & + |V^i(\vecpi', \vecpi^{*, -i}) - \vecpi'^\top\vecF(\vecpi^*)| \\
\leq &\delta + \frac{L(2\sqrt{2} + 4)}{N} + \frac{4L}{\sqrt{N}}.
\end{align*}
\end{proof}

Recall our goal in the context of the $N$-player SMFG is to find policies $\{\vecpi^j\}_{j=1}^N$ with low exploitability $\setE_{\text{exp}}^i$ for all $i$. Theorem~\ref{theorem:mfg_ne} considers that agents adopt the same policy $\vecpi^*$ from solving the VI corresponding to operator $\vecF$ to obtain a low-exploitability approximation. 
We will generalize this result to explicitly bound $\setE_{\text{exp}}^i$ when agent policies can also deviate and when agents can employ regularization.
In our algorithms, regularizing the MF-VI problem will play a crucial role in the IL setting, as it will prevent the policies of agents from diverging when there is no centralized controller synchronizing the policies of agents.
For this reason, our algorithms in the later sections will introduce extraneous regularization to \eqref{eq:mfg_vi_statement} and instead solve the following $\tau$-Tikhonov regularized VI problem:
\begin{align}\label{eq:mfg_rvi_statement}
    \text{Find } \vecpi^* \in \Delta_\setA \text{ s.t. } (\vecF - \tau \matI)(\vecpi^*)^\top (\vecpi^* - \vecpi) \geq 0, \forall \vecpi\in \Delta_\setA. \tag{MF-RVI}
\end{align}

The following theorem quantifies the additional exploitability incurred in the $N$-player game due to (1) extraneous regularization, which is useful for algorithm design, and (2) deviations in agent policies from the MF-NE, potentially due to stochasticity in learning.
Theorem~\ref{theorem:mfgrvi_and_explotability} will be a more useful result later in a learning setting since the learned policies $(\vecpi^1, \ldots, \vecpi^N)$ will only approximate the solution of \eqref{eq:mfg_rvi_statement}.

\begin{theorem}
\label{theorem:mfgrvi_and_explotability}
Let $\vecF$ be monotone, $L$-Lipschitz.
Let $\vecpi_{\tau}^* \in \Delta_\setA$ be the (unique) MF-NE of the regularized map $\vecF - \tau \matI$.
Let $\vecpi^1, \ldots, \vecpi^N \in \Delta_\setA$ be such that $\|\vecpi^i - \vecpi_\tau^*\|_2 \leq \delta$ for all $i$, then it holds that $\setE^i_{\text{exp}}(\{\vecpi^j\}_{j=1}^N) = \mathcal{O}(\tau + \delta + \sfrac{1}{\sqrt{N}})$ for all $i\in\setN$.
\end{theorem}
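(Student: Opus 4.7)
The strategy is to reduce to Theorem~\ref{theorem:mfg_ne} by (i) showing $\vecpi_\tau^*$ is an approximate MF-NE of the \emph{unregularized} operator $\vecF$, (ii) applying Theorem~\ref{theorem:mfg_ne} to lift this to an exploitability bound for the symmetric profile $(\vecpi_\tau^*,\dots,\vecpi_\tau^*)$, and then (iii) using the Lipschitz continuity of $\setE^i_{\text{exp}}$ from Lemma~\ref{lemma:phi_lipschitz} to pass to the nearby profile $(\vecpi^1,\dots,\vecpi^N)$.

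\textbf{Step 1: $\vecpi_\tau^*$ is an $\mathcal{O}(\tau)$-MF-NE of $\vecF$.} Since $\vecpi_\tau^*$ solves \eqref{eq:mfg_rvi_statement}, for every $\vecpi\in\Delta_\setA$ we have $(\vecF(\vecpi_\tau^*) - \tau \vecpi_\tau^*)^\top(\vecpi_\tau^* - \vecpi)\geq 0$, whence $\vecF(\vecpi_\tau^*)^\top(\vecpi_\tau^* - \vecpi) \geq \tau\,\vecpi_\tau^{*,\top}(\vecpi_\tau^* - \vecpi) \geq -\tau\|\vecpi_\tau^*\|_2\|\vecpi_\tau^*-\vecpi\|_2 \geq -\sqrt{2}\,\tau$, using that the simplex has diameter $\sqrt{2}$ and $\|\vecpi_\tau^*\|_2\leq 1$. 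Thus $\vecpi_\tau^*$ is a $\sqrt{2}\tau$-MF-NE of $\vecF$.

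\textbf{Step 2: Exploitability at the symmetric profile.} Applying Theorem~\ref{theorem:mfg_ne} with the approximate MF-NE from Step~1 gives $\setE^i_{\text{exp}}(\vecpi_\tau^*,\dots,\vecpi_\tau^*) = \mathcal{O}\bigl(\tau + L/\sqrt{N}\bigr)$ for every $i\in\setN$.

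\textbf{Step 3: Perturbation via Lipschitz continuity.} I will move from the symmetric profile $(\vecpi_\tau^*,\dots,\vecpi_\tau^*)$ to the target profile $(\vecpi^1,\dots,\vecpi^N)$ one coordinate at a time, replacing player $j$'s policy at step $j$. Each replacement changes one coordinate by $\|\vecpi^j-\vecpi_\tau^*\|_2\leq\delta$, so by Lemma~\ref{lemma:phi_lipschitz} the change in $\setE^i_{\text{exp}}$ is at most $\bar{L}_{j,i}\,\delta$. Summing over $j=1,\ldots,N$ yields a total perturbation bounded by
\begin{align*}
\sum_{j=1}^N \bar{L}_{j,i}\,\delta \;=\; \Bigl(\sqrt{K} + (N-1)\tfrac{4L\sqrt{2K}}{N}\Bigr)\delta \;=\; \mathcal{O}(\delta),
\end{align*}
treating $K$ and $L$ as constants. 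Combining with Step~2 via the triangle inequality gives
\begin{align*}
\setE^i_{\text{exp}}(\{\vecpi^j\}_{j=1}^N) \;\leq\; \setE^i_{\text{exp}}(\vecpi_\tau^*,\ldots,\vecpi_\tau^*) + \mathcal{O}(\delta) \;=\; \mathcal{O}\!\bigl(\tau+\delta+\tfrac{1}{\sqrt{N}}\bigr),
\end{align*}
as claimed.

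\textbf{Main obstacle.} None of the three steps is individually deep: Step~1 is a one-line simplex-diameter estimate, Step~2 is a direct invocation, and Step~3 is a telescoping Lipschitz bound. The only subtle point is that the off-diagonal Lipschitz modulus in Lemma~\ref{lemma:phi_lipschitz} scales as $1/N$, which is precisely what prevents the sum over $N$ coordinate swaps from blowing up; one must check that this cancellation gives an $N$-independent factor in front of $\delta$ so that the $\mathcal{O}(\delta)$ term does not hide a factor of $N$.
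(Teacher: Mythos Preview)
Your proposal is correct and follows essentially the same approach as the paper: both show $\vecpi_\tau^*$ is an $\mathcal{O}(\tau)$-MF-NE of $\vecF$ via a simplex-diameter estimate, invoke Theorem~\ref{theorem:mfg_ne} for the symmetric profile, and use the coordinatewise Lipschitz constants of Lemma~\ref{lemma:phi_lipschitz} (with the crucial $1/N$ scaling off-diagonal) to absorb the $\delta$-perturbations. The only cosmetic differences are the order of the steps and your slightly sharper constant $\sqrt{2}\tau$ versus the paper's $2\tau$ in Step~1.
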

\begin{proof}
By the Lipschitz continuity of exploitability (Lemma~\ref{lemma:phi_lipschitz}), we have
\begin{align}
    \setE^i_{\text{exp}}(\{ \vecpi^j \}_{j=1}^N) \leq &\setE^i_{\text{exp}}(\{ \vecpi_\tau^* \}_{j=1}^N) + \sqrt{K} \| \vecpi^i - \vecpi_\tau^* \|_2 + \sum_{j\neq i} \frac{4L\sqrt{2K}}{N} \| \vecpi^j - \vecpi_\tau^*\|_2 \notag \\
        \leq & \setE^i_{\text{exp}}(\{ \vecpi_\tau^* \}_{j=1}^N) + \delta \sqrt{K} + 4L\sqrt{2K} \delta. \label{eq:theorem:rviexpbound}
\end{align}
Since $\vecpi_\tau^*$ is the unique MF-NE of the operator $\vecF - \tau \matI$, it holds by definition that
\begin{align*}
    (\vecF - \tau \matI)(\vecpi_{\tau})^\top \vecpi_{\tau} &\geq (\vecF - \tau \matI)(\vecpi_{\tau})^\top \vecpi.
\end{align*}
Organizing both sides, we have
\begin{align*}
    \vecF(\vecpi_{\tau})^\top\vecpi_{\tau} &\geq \vecF(\vecpi_{\tau})^\top \vecpi + \tau \vecpi_{\tau}^\top (\vecpi_{\tau} - \vecpi) \geq \vecF(\vecpi_{\tau})^\top \vecpi - 2\tau,
\end{align*}
as $|\vecpi_{\tau}^\top (\vecpi_{\tau} - \vecpi)| \leq \|\vecpi_{\tau}\|_2 \|\vecpi_{\tau} - \vecpi\|_2 \leq 2$.
Then, $\vecpi^*_{\tau}$ is a $2\tau$-MF-NE for the operator $\vecF$, and by Theorem~\ref{theorem:mfg_ne}, $\setE^i_{\text{exp}}(\{ \vecpi_\tau^* \}_{j=1}^N) \leq \mathcal{O}(\tau + \sfrac{1}{\sqrt{N}})$.
Placing this in~(\ref{eq:theorem:rviexpbound}) proves the theorem.
\end{proof}

To summarize, this section presented key approximation results linking the solutions of \eqref{eq:mfg_vi_statement} and \eqref{eq:mfg_rvi_statement} to the $N$-player exploitability in the SMFG.
The next sections will be devoted to designing sample-efficient IL algorithms.

\section{Convergence in the Full Feedback Case}\label{sec:expert_feedback_results}

We first present an IL algorithm for the full feedback setting, as a first step towards analyzing the more interesting bandit feedback setting.
In this setting, while there is no centralized controller, independent noisy reports of all action payoffs are available to each agent after each round.

\textbf{Is it possible to simply solve MF-RVI in our IL setting?}
Before we present our results, we note the following:
Past works in MFG have already proved approximation results of $N$-agent games by MFG albeit in different settings \citep{saldi2019approximate, yardim2024mean}, but these results do not consider when \emph{learning itself} is carried out with $N$ agents. 
If $N$ agents can not communicate, it is theoretically challenging to approximate the MF-RVI and to tackle bandit feedback.
Most importantly, the IL algorithms formalized in Section~\ref{section:alg_formalization} can not query an operator oracle or maintain a common iterate throughout repeated plays.
Therefore, the approximation properties of \eqref{eq:mfg_vi_statement} do not immediately imply the MF-NE can be learned using VI algorithms.
In this section and the next, we prove the more challenging result of convergence with IL, first under full feedback and later under partial (bandit) feedback.

Our analysis builds up on Tikhonov regularized projected ascent (TRPA).
The TRPA operator is defined as
\begin{align}
    \Gamma^{\eta, \tau}(\vecpi) := \Pi_{\Delta_\setA} ( \vecpi + \eta (\vecF - \tau \matI)(\vecpi) ) = \Pi_{\Delta_\setA} ( (1-\eta\tau) \vecpi + \eta \vecF(\vecpi) ), \tag{TRPA}
\end{align}
for a learning rate $\eta > 0$ and regularization $\tau > 0$.
Intuitively, $\Gamma^{\eta, \tau}$ uses $\vecF$ evaluated at $\vecpi$ to modify action probabilities in the direction of the greatest payoff, incorporating an $\ell_2$ regularizer of $\tau$.
Furthermore, the unique MF-NE $\vecpi^*$ of \eqref{eq:mfg_rvi_statement} is also a fixed point of $\Gamma^{\eta, \tau}$.
The analysis of TRPA is standard and known to converge for monotone $\vecF$ \citep{facchinei2003finite, nemirovski2004prox}, when (stochastic) oracle access to $\vecF$ is assumed.
Naturally, the main complication in applying the method above will be the fact that in the IL setting, agents can not evaluate the operator $\vecF$ arbitrarily, but rather can only observe (a noisy) estimate of $\vecF$ as a function of the empirical population distribution and not of their policy $\vecpi$.
In the full feedback setting, we analyze the following dynamics:
\begin{align}
     \vecpi_0^i := \operatorname{Unif} (\setA) = \frac{1}{K}\vecone_K, \hspace{1em} \vecpi^i_{t+1} =\Pi_{\Delta_\setA} ( (1 - \tau \eta_t) \vecpi_t^i + \eta_t \vecr_t^i ), \tag{TRPA-Full}
\end{align}
for a time varying learning rate $\eta_t$, for each agent $i \in \setN$.
The extraneous $\ell_2$-regularization incorporated in each agent running TRPA-Full is critical for the analysis and convergence in IL, as it allows explicit synchronization of policies of agents without communication.
We state the TRPA-Full algorithm in Algorithm~\ref{alg:full} for reference.

We state the following standard result regarding the TRPA operator without proof, as it will be used later.

\begin{lemma}[cf. Theorem 12.1.2 of \cite{facchinei2003finite}]\label{lemma:contraction_pg}
Assume $\vecF$ is $\lambda\geq 0$-monotone and $L$-Lipschitz.
Then $\Gamma^{\eta,\tau}$ is Lipschitz with constant $\sqrt{1 - 2 (\lambda + \tau) \eta + \eta^2 (L+\tau)^2}$ with respect to the $\ell_2$-norm.
\end{lemma}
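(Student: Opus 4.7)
The plan is to bound the squared $\ell_2$-distance $\|\Gamma^{\eta,\tau}(\vecpi_1) - \Gamma^{\eta,\tau}(\vecpi_2)\|_2^2$ directly by expanding the definition of the TRPA step and invoking the monotonicity and Lipschitz assumptions on $\vecF$. The first step is to discard the projection using the non-expansiveness of $\Pi_{\Delta_\setA}$ (the Euclidean projection onto a closed convex set is $1$-Lipschitz), which reduces the problem to bounding
\begin{align*}
\bigl\| (\vecpi_1 - \vecpi_2) + \eta\bigl[(\vecF - \tau\matI)(\vecpi_1) - (\vecF - \tau\matI)(\vecpi_2)\bigr]\bigr\|_2^2.
\end{align*}

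Next I would expand this square into three pieces: the term $\|\vecpi_1 - \vecpi_2\|_2^2$, a cross term $2\eta \bigl[(\vecF-\tau\matI)(\vecpi_1) - (\vecF-\tau\matI)(\vecpi_2)\bigr]^\top (\vecpi_1 - \vecpi_2)$, and the squared-norm term $\eta^2 \|(\vecF-\tau\matI)(\vecpi_1) - (\vecF-\tau\matI)(\vecpi_2)\|_2^2$. For the cross term, I would split $\vecF - \tau\matI$ additively: the $-\tau\matI$ contributes $-2\eta\tau\|\vecpi_1 - \vecpi_2\|_2^2$, while the $\vecF$ contribution is controlled by the monotonicity in Assumption~\ref{ass:monotone}, which (with the paper's sign convention for payoffs) yields $(\vecF(\vecpi_1) - \vecF(\vecpi_2))^\top(\vecpi_1-\vecpi_2) \leq -\lambda \|\vecpi_1-\vecpi_2\|_2^2$, contributing an additional $-2\eta\lambda \|\vecpi_1-\vecpi_2\|_2^2$. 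For the quadratic term, the triangle inequality combined with the $L$-Lipschitz continuity of $\vecF$ gives that $\vecF - \tau\matI$ is $(L+\tau)$-Lipschitz, so this term is at most $\eta^2(L+\tau)^2 \|\vecpi_1 - \vecpi_2\|_2^2$.

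Summing these three contributions yields the bound
\begin{align*}
\|\Gamma^{\eta,\tau}(\vecpi_1) - \Gamma^{\eta,\tau}(\vecpi_2)\|_2^2 \leq \bigl(1 - 2(\lambda+\tau)\eta + \eta^2(L+\tau)^2\bigr)\|\vecpi_1 - \vecpi_2\|_2^2,
\end{align*}
and taking square roots gives the claim. I expect no serious obstacle here: every step is a direct application of an assumption. The only subtle point is the sign convention for monotonicity, since the paper treats $\vecF$ as a payoff and defines monotonicity so that $(\vecF(\vecpi_1)-\vecF(\vecpi_2))^\top(\vecpi_1-\vecpi_2) \leq -\lambda\|\vecpi_1-\vecpi_2\|_2^2$; one has to make sure this sign is correctly threaded through the cross-term expansion so that both the $-\tau$ and the monotonicity produce negative contributions that reinforce rather than cancel.
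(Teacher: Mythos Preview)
Your proof is correct and is exactly the standard argument for this fact. The paper does not provide its own proof of this lemma; it is stated without proof as a standard result attributed to Theorem~12.1.2 of Facchinei and Pang, so there is nothing to compare against.
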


\begin{algorithm}
    \caption{TRPA-Full: IL with full feedback algorithm for each agent $i \in \setN$.}\label{alg:full}
    \begin{algorithmic}
    \Require Number of actions $K$, regularization $\tau > 0$, learning rate $\{\eta_t\}_{t=0}^T$, rounds $T > 0$.
    \State $\vecpi^i_0 \leftarrow \frac{1}{K} \vecone$
    \For{$t = 0, \ldots, T-1$}
    \State \text{Play action with current policy $a^i_{t}\sim \vecpi^i_t$}.
    \State \text{Observe payoff $\vecr^i_{t}$}
    \State $\vecpi^i_{t+1} = \Pi_{\Delta_\setA} ( (1 - \tau \eta_t) \vecpi^i_t + \eta_t \vecr^i_t )$
    \EndFor
    \State Return $\vecpi^i_T$
    \end{algorithmic}
\end{algorithm}

For abuse of notation, let $\vecpi^* \in \Delta_\setA$ be the unique solution of \eqref{eq:mfg_rvi_statement} for the regularization $\tau > 0$.
Also define the sigma algebra $\mathcal{F}_{t} := \mathcal{F}(\{ \vecpi_{t'}^i \}_{t'=0, \ldots, t}^{i=1, \ldots, N})$.
We maintain the definitions of the core random variables of the SMFG dynamics introduced in Section~\ref{sec:game_initial_formulation},
\begin{align*}
    \widehat{\vecmu}_t := \frac{1}{N} \sum_{i=1}^N \vece_{a_t^i}, \quad \vecr^i_t := \vecF(\widehat{\vecmu}_t) + \vecn_t^i.
\end{align*}
Under TRPA-Full dynamics, we also define the following random variables that assist our analysis.
\begin{align*}
    \bar{\vecmu}_t &:= \frac{1}{N} \sum_{i=1}^N \vecpi_t^i, \quad e_t^i := \|\vecpi^i_t - \bar{\vecmu}_t \|_2^2, \quad
    u_t^i := \Exop\left[\| \vecpi_t^i - \vecpi^* \|_2^2\right].
\end{align*}
We call $\bar{\vecmu}_t$ the mean policy, $e_t^i$ the mean policy deviation, and $u_t^i$ the expected $\ell_2$-deviation from the regularized MF-NE.
Our goal is to bound the sequence or error terms $u_t^i$; however, the process is complicated by the fact that in general the policy deviations of agents $e_t^i$ are nonzero.
Our strategy is as follows: 
(1) derive a  recursion for $u_t^i$ incorporating the terms $e_t^i$, 
(2) bound the terms $e_t^i$, showing the deviation of the policies of the agents goes to zero in expectation, and
(3) solve the recursion to obtain the convergence rate.

The following lemma captures the first step and provides a recurrence for the evolution of $u_t^i$ under TRPA-Full.

\begin{lemma}[Error recurrence under full feedback]\label{lemma:full_error_recurrence}
    Under TRPA-Full with learning rates $\eta_t$, it holds for $L$-Lipschitz and $\lambda$-strongly monotone $\vecF$ that
    \begin{align*}
    \Exop\left[\| \vecpi_{t+1}^i - \vecpi^* \|_2^2\right] \leq &3\eta_t^2 K(1 + \sigma^2) + 2\eta_t^2(L+\tau)^2 + \frac{4\eta_t L^2 \lambda^{-1}}{ N } \\
        & + 2\eta_t L^2 \lambda^{-1} \Exop\left[e_t^i\right] + \left(1 - 2 \eta_t(\sfrac{\lambda}{2} + \tau)\right) \Exop\left[\| \vecpi_t^i - \vecpi^* \|_2^2\right],
\end{align*}
and for $L$-Lipschitz and monotone $\vecF$ that
\begin{align*}
    \Exop\left[\| \vecpi_{t+1}^i - \vecpi^* \|_2^2\right] \leq &3\eta_t^2 K(1 + \sigma^2) + 2\eta_t^2(L+\tau)^2 + \frac{4\tau^{-1} \eta_t L^2 \delta^{-1}}{ N} \\
        & + \tau^{-1}\eta_t L^2\delta^{-1} \Exop\left[e_t^i\right] + \left(1 - 2\tau \eta_t (1-\delta)\right) \Exop\left[\| \vecpi_t^i - \vecpi^* \|_2^2\right],
\end{align*}
for arbitrary $\delta \in (0,1)$.
\end{lemma}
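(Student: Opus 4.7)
The plan is to bound $\|\vecpi_{t+1}^i - \vecpi^*\|_2^2$ by peeling off sources of error one at a time: the projection (via non-expansiveness), the noise (via zero-mean independence), the nonlinearity of $\vecF$ (via monotonicity and Young's inequality), and the empirical-measure deviation (via a conditional variance computation).

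I would first rewrite the iteration as $\vecpi_{t+1}^i = \Pi_{\Delta_\setA}(\vecw_t^i + \eta_t\vecn_t^i)$ with $\vecw_t^i := (1-\tau\eta_t)\vecpi_t^i + \eta_t\vecF(\widehat{\vecmu}_t)$, and note that since $\vecpi^*$ solves \eqref{eq:mfg_rvi_statement} it is a fixed point of $\Gamma^{\eta_t,\tau}$; in particular $\vecpi^* = \Pi_{\Delta_\setA}(\vecy^*)$ with $\vecy^* := (1-\tau\eta_t)\vecpi^* + \eta_t\vecF(\vecpi^*)$. Non-expansiveness of the simplex projection, followed by expansion of the squared norm, gives $\|\vecpi_{t+1}^i - \vecpi^*\|_2^2 \leq \|\vecw_t^i - \vecy^*\|_2^2 + 2\eta_t\langle\vecw_t^i - \vecy^*, \vecn_t^i\rangle + \eta_t^2\|\vecn_t^i\|_2^2$. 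Conditioning on $\mathcal{F}_t$ together with the round-$t$ actions (which determine $\vecw_t^i$), Assumption~\ref{ass:noise} makes $\vecn_t^i$ conditionally zero-mean, so the linear cross term vanishes in expectation, while the quadratic noise term contributes at most $\eta_t^2 K\sigma^2$.

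The heart of the argument is bounding $\mathbb{E}\|\vecw_t^i - \vecy^*\|_2^2$. Fully expanded, the pure quadratic piece gives $(1-\tau\eta_t)^2\|\vecpi_t^i - \vecpi^*\|_2^2$, and $\eta_t^2\|\vecF(\widehat{\vecmu}_t) - \vecF(\vecpi^*)\|_2^2$ is bounded by $\eta_t^2 K$ using $\vecF \in [0,1]^K$. The crucial cross term $2\eta_t(1-\tau\eta_t)\langle\vecpi_t^i - \vecpi^*, \vecF(\widehat{\vecmu}_t) - \vecF(\vecpi^*)\rangle$ is where the two regimes diverge. Splitting $\vecF(\widehat{\vecmu}_t) - \vecF(\vecpi^*) = [\vecF(\vecpi_t^i) - \vecF(\vecpi^*)] + [\vecF(\widehat{\vecmu}_t) - \vecF(\vecpi_t^i)]$, monotonicity bounds the first piece by $-\lambda\|\vecpi_t^i - \vecpi^*\|_2^2$ (or by $0$ when only monotone), while Cauchy--Schwarz together with $L$-Lipschitzness bounds the second by $L\|\vecpi_t^i - \vecpi^*\|_2 \,\|\widehat{\vecmu}_t - \vecpi_t^i\|_2$. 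I would then apply Young's inequality with parameter $\gamma = \lambda$ in the strongly monotone case, sacrificing half the contraction for a $-\eta_t\lambda$ factor on $\|\vecpi_t^i - \vecpi^*\|_2^2$ and a $\tfrac{\eta_t L^2}{2\lambda}$ prefactor on $\|\widehat{\vecmu}_t - \vecpi_t^i\|_2^2$; and with parameter $\gamma = 2\tau\delta$ in the only-monotone case, where the $-2\tau\eta_t$ term coming from $(1-\tau\eta_t)^2$ must supply the contraction, producing the $(1-\delta)$ shrinkage and $\tfrac{\eta_t L^2}{2\tau\delta}$ prefactor. The residual $O(\eta_t^2)$ contributions from both $(1-\tau\eta_t)^2$ and the Young term assemble into a $\eta_t^2(L+\tau)^2\|\vecpi_t^i - \vecpi^*\|_2^2$ remainder, which is absorbed into the additive $2\eta_t^2(L+\tau)^2$ via $\|\vecpi_t^i - \vecpi^*\|_2^2 \leq 2$ on the simplex.

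Finally, to turn the perturbation into $e_t^i$ and $1/N$, I would split $\widehat{\vecmu}_t - \vecpi_t^i = (\widehat{\vecmu}_t - \bar{\vecmu}_t) + (\bar{\vecmu}_t - \vecpi_t^i)$ and apply $\|x+y\|_2^2 \leq 2\|x\|_2^2 + 2\|y\|_2^2$, identifying the second piece as $2 e_t^i$; for the first, conditional on $\mathcal{F}_t$ the indicators $\vece_{a_t^j}$ are independent across $j$ with mean $\vecpi_t^j$, so a direct variance computation yields $\mathbb{E}[\|\widehat{\vecmu}_t - \bar{\vecmu}_t\|_2^2 \mid \mathcal{F}_t] \leq 1/N$. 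Collecting all terms and taking the total expectation then produces the two claimed recursions (with some slack in absolute constants). The main obstacle is the calibration of the Young parameter $\gamma$: choosing it too small leaves no contraction surplus, while choosing it too large inflates the $1/N$ and $e_t^i$ prefactors beyond what the lemma allows. The strongly monotone and only-monotone cases require genuinely different calibrations because the contraction is sourced from different places (the $\lambda$ in the operator versus the $\tau$ regularization in the step), and once $\gamma$ is pinned down everything else reduces to routine constant bookkeeping and repeated use of the simplex bound and $\vecF \in [0,1]^K$.
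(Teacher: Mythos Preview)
Your proposal is correct and follows essentially the same approach as the paper: both arguments use the fixed-point characterization of $\vecpi^*$, non-expansiveness of the projection, monotonicity of $\vecF-\tau\matI$ for contraction, Young's inequality (calibrated to $\lambda$ or to $\tau\delta$) to control the perturbation $\vecF(\widehat{\vecmu}_t)-\vecF(\vecpi_t^i)$, and the conditional-variance bound on $\|\widehat{\vecmu}_t-\bar{\vecmu}_t\|_2^2$. The only organizational difference is that the paper groups the ``ideal TRPA step'' as the single term $\|\alpha_t(\vecpi_t^i-\vecpi^*)+\eta_t(\vecF(\vecpi_t^i)-\vecF(\vecpi^*))\|_2^2$ and invokes the contraction Lemma~\ref{lemma:contraction_pg} as a black box, whereas you separate the noise first and expand the quadratic directly; the resulting bookkeeping is the same up to constants.
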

\begin{proof}
We analyze for any $i\in[N]$ the error term $\| \vecpi_t^i - \vecpi^*\|_2^2$.
Denote $\alpha_t := (1 - \tau \eta_t)$.
For the regularized solution $\vecpi^*$, we have the fixed point result
\begin{align*}
    \Pi_{\Delta_{\setA}} ((1 - \tau \eta_t) \vecpi^* + \eta_t \vecF(\vecpi^*)) = \Pi_{\Delta_{\setA}} (\vecpi^* + \eta_t (\vecF - \tau \matI)(\vecpi^*)) = \vecpi^*.
\end{align*}
The proof strategy is to decompose the $\ell_2$ distance of player policies to $\vecpi^*$ into 3 components using this property.
We can bound the quantity $\| \vecpi_{t+1}^i - \vecpi^*\|_2^2$ by using the non-expansiveness of $\Pi_{\Delta_{\setA}}$:
\begin{align}
    \| \vecpi_{t+1}^i - \vecpi^*\|_2^2 = &\| \Pi_{\Delta_{\setA}}(\alpha_t \vecpi^i_t + \eta_t \vecr_t^i) - \Pi_{\Delta_{\setA}} (\alpha_t \vecpi^* + \eta_t \vecF(\vecpi^*)) \|_2^2 \notag \\
        \leq &\| \alpha_t \vecpi_t^i + \eta_t \vecF(\vecpi_t^i) - \alpha_t \vecpi^* - \eta_t \vecF(\vecpi^*) + \eta_t (\vecr_t^i - \vecF(\vecpi_t^i) )\|_2^2 \notag \\
        = & \eta_t^2\| \vecr_t^i - \vecF(\vecpi_t^i) \|_2^2 + 2\eta_t (\alpha_t (\vecpi_t^i - \vecpi^*) + \eta_t (\vecF(\vecpi_t^i) - \vecF(\vecpi^*)) )^\top (\vecr_t^i - \vecF(\vecpi_t^i)) \notag \\
         & + \|\alpha_t (\vecpi_t^i - \vecpi^*) + \eta_t (\vecF(\vecpi_t^i) - \vecF(\vecpi^*))\|_2^2 \notag \\
         \leq & \underbrace{\eta_t^2\| \vecr_t^i - \vecF(\vecpi_t^i) \|_2^2 + 2\eta_t^2 (\vecF(\vecpi_t^i) - \vecF(\vecpi^*))^\top (\vecr_t^i - \vecF(\vecpi_t^i))}_{(a)} \notag \\
            &+ \underbrace{2\eta_t\alpha_t (\vecpi_t^i - \vecpi^*)^\top (\vecr_t^i - \vecF(\vecpi_t^i))}_{(b)} + \underbrace{\|\alpha_t (\vecpi_t^i - \vecpi^*) + \eta_t (\vecF(\vecpi_t^i) - \vecF(\vecpi^*))\|_2^2 }_{(c)}. \label{ineq:decomp_abc_full_recur}
\end{align}

We analyze the three marked terms separately.
For term $(a)$, using the independence assumption of the noise vectors and Young's inequality, in expectation we obtain
\begin{align*}
    \Exop[(a)] \leq &\eta_t^2 \Exop[\| \vecr_t^i - \vecF(\vecpi_t^i) \|_2^2] + \eta_t^2 \Exop[\|\vecF(\vecpi_t^i) - \vecF(\vecpi^*)\|_2^2] + \eta_t^2 \Exop[\| \vecr_t^i - \vecF(\vecpi_t^i) \|_2^2] \\
    \leq &2\eta_t^2 \Exop[\| \vecr_t^i - \vecF(\vecpi_t^i) \|_2^2] + \eta_t^2 \Exop[\|\vecF(\vecpi_t^i) - \vecF(\vecpi^*)\|_2^2] \\
    \leq & 2\eta_t^2 \Exop[\| \vecr_t^i - \vecF(\widehat{\vecmu}_t)\|_2^2 + \| \vecF(\widehat{\vecmu}_t) - \vecF(\vecpi^i_t)\|_2^2 ] + \eta_t^2 K \\
    \leq & 2\eta_t^2 \sigma^2 K + 3\eta_t^2 K \leq 3\eta_t^2 K(\sigma^2 + 1)
\end{align*}
For the term $(c)$, we obtain
\begin{align*}
    (c) = &\|\alpha_t (\vecpi_t^i - \vecpi^*) + \eta_t (\vecF(\vecpi_t^i) - \vecF(\vecpi^*))\|_2^2 \\
        = & \|(\vecpi_t^i - \vecpi^*) + \eta_t (\vecF(\vecpi_t^i) - \tau \vecpi_t^i - \vecF(\vecpi^*) + \tau \vecpi^* )\|_2^2 \\
        \leq & \left(1 - 2 (\lambda + \tau) \eta_t + (L + \tau)^2 \eta_t^2\right) \| \vecpi_t^i - \vecpi^* \|_2^2 \\
        \leq & \left(1 - 2 (\lambda + \tau) \eta_t \right) \| \vecpi_t^i - \vecpi^* \|_2^2 + 2(L + \tau)^2 \eta_t^2
\end{align*}
where the last inequality holds from the Lipschitz continuity result of Lemma~\ref{lemma:contraction_pg}.

For the term $(b)$, first taking the strongly monotone problem $\lambda > 0$ , we have that
\begin{align*}
(b) = & 2\eta_t\alpha_t (\vecpi_t^i - \vecpi^*)^\top (\vecr_t^i - \vecF(\vecpi_t^i)) \\
 = & 2\eta_t\alpha_t (\vecpi_t^i - \vecpi^*)^\top (\vecr_t^i - \vecF(\widehat{\vecmu}_t)) + 2\eta_t\alpha_t (\vecpi_t^i - \vecpi^*)^\top (\vecF(\widehat{\vecmu}_t) - \vecF(\bar{\vecmu}_t)) \\
    & + 2\eta_t\alpha_t (\vecpi_t^i - \vecpi^*)^\top (\vecF(\bar{\vecmu}_t) - \vecF(\vecpi_t^i)) \\
\leq &2\eta_t\alpha_t \left( \frac{\lambda}{4} \|\vecpi_t^i - \vecpi^* \|_2^2 + \frac{1}{\lambda} \|\vecF(\widehat{\vecmu}_t) - \vecF(\bar{\vecmu}_t)\|_2^2\right) + 2\eta_t\alpha_t \left(\frac{\lambda}{4} \|\vecpi_t^i - \vecpi^* \|_2^2 + \frac{1}{\lambda} \|\vecF(\bar{\vecmu}_t) - \vecF(\vecpi_t^i)\|_2^2 \right) \\
    &+2\eta_t\alpha_t (\vecpi_t^i - \vecpi^*)^\top (\vecr_t^i - \vecF(\widehat{\vecmu}_t)) \\
\leq & \eta_t \lambda \|\vecpi_t^i - \vecpi^* \|_2^2 + 2\eta_t\lambda^{-1}\|\vecF(\widehat{\vecmu}_t) - \vecF(\bar{\vecmu}_t)\|_2^2 + 2\eta_t\lambda^{-1} \|\vecF(\bar{\vecmu}_t) - \vecF(\vecpi_t^i)\|_2^2 \\
    &+2\eta_t\alpha_t (\vecpi_t^i - \vecpi^*)^\top (\vecr_t^i - \vecF(\widehat{\vecmu}_t)),
\end{align*}
which follows from applications of Young's inequality.
For the last three terms we observe:
\begin{align*}
    \Exop\left[2\eta_t\alpha_t (\vecpi_t^i - \vecpi^*)^\top (\vecr_t^i - \vecF(\widehat{\vecmu}_t)) | \mathcal{F}_t\right] = &0, \\
    \Exop[\|\vecF(\widehat{\vecmu}_t) - \vecF(\bar{\vecmu}_t)\|_2^2 | \mathcal{F}_{t}] \leq & L^2 \Exop\left[ \|\widehat{\vecmu}_t - \bar{\vecmu}_t\|_2^2 | \mathcal{F}_{t}\right] \\
    \leq & L^2\Exop\left[\frac{1}{N^2}\|\sum_{i}\vecpi_t^i - \sum_{i} \vece_{a_t^i}\|^2_2 | \mathcal{F}_{t}\right] \\
    = & \frac{L^2}{N^2}\sum_{i}\Exop[\|\vecpi_t^i - \vece_{a_t^i}\|^2_2 | \mathcal{F}_{t}] \leq \frac{2L^2}{N}, \\
    \|\vecF(\bar{\vecmu}_t) - \vecF(\vecpi_t^i)\|_2^2 \leq & L^2 \|\bar{\vecmu}_t - \vecpi_t^i\|_2^2 = L^2 e_t^i.
\end{align*}
The second inequality above follows from the fact that $\widehat{\vecmu}_t$ is the sum of $N$ independent random variables and has expectation $\bar{\vecmu}_t$.
Hence, putting in the bounds for $(a), (b), (c)$ and taking expectations, we obtain the inequality
\begin{align*}
    \Exop\left[\| \vecpi_{t+1}^i - \vecpi^*\|_2^2 \right] \leq & 3 \eta_t^2 K(1 + \sigma^2) + \frac{4\eta_t L^2}{\lambda N} + \frac{2\eta_t L^2}{\lambda} \Exop\left[e_t^i\right] \\
        &+\left(1 - 2 (\sfrac{\lambda}{2} + \tau) \eta_t \right) \Exop\left[\| \vecpi_t^i - \vecpi^* \|_2^2\right] + 2(L + \tau)^2 \eta_t^2.
\end{align*}

Turning back to the monotone case, if $\lambda=0$, vary the upper bound on $(b)$ as follows.
Take any arbitrary $\delta \in (0,1)$.
Then, once again applying Young's inequality, we obtain
\begin{align*}
(b) = & 2\eta_t\alpha_t (\vecpi_t^i - \vecpi^*)^\top (\vecr_t^i - \vecF(\vecpi_t^i)) \\
 = & 2\eta_t\alpha_t (\vecpi_t^i - \vecpi^*)^\top (\vecr_t^i - \vecF(\widehat{\vecmu}_t)) + 2\eta_t\alpha_t (\vecpi_t^i - \vecpi^*)^\top (\vecF(\widehat{\vecmu}_t) - \vecF(\bar{\vecmu}_t)) \\
    & + 2\eta_t\alpha_t (\vecpi_t^i - \vecpi^*)^\top (\vecF(\bar{\vecmu}_t) - \vecF(\vecpi_t^i)) \\
\leq &2\eta_t\alpha_t \left( \frac{\tau\delta}{2} \|\vecpi_t^i - \vecpi^* \|_2^2 + \frac{1}{2\tau\delta} \|\vecF(\widehat{\vecmu}_t) - \vecF(\bar{\vecmu}_t)\|_2^2\right) + 2\eta_t\alpha_t \left(\frac{\tau\delta}{2} \|\vecpi_t^i - \vecpi^* \|_2^2 + \frac{1}{2\tau\delta} \|\vecF(\bar{\vecmu}_t) - \vecF(\vecpi_t^i)\|_2^2 \right)  \\
    &+2\eta_t\alpha_t (\vecpi_t^i - \vecpi^*)^\top (\vecr_t^i - \vecF(\widehat{\vecmu}_t)) \\
\leq & 2 \eta_t \tau\delta \|\vecpi_t^i - \vecpi^* \|_2^2 + \frac{\eta_t}{\tau\delta}\|\vecF(\widehat{\vecmu}_t) - \vecF(\bar{\vecmu}_t)\|_2^2 + \frac{\eta_t}{\tau\delta} \|\vecF(\bar{\vecmu}_t) - \vecF(\vecpi_t^i)\|_2^2 \\
    &+2\eta_t\alpha_t (\vecpi_t^i - \vecpi^*)^\top (\vecr_t^i - \vecF(\widehat{\vecmu}_t)).
\end{align*}
Applying the same bounds for the terms $(a), (c)$ as before yields  the lemma.
\end{proof}

This above lemma has two key features: a dependence on expected mean policy deviation $\Exop\left[e_t^i\right]$, and a term that scales as $\mathcal{O}(\sfrac{1}{N})$.
While the $\mathcal{O}(\sfrac{1}{N})$ term can be anticipated (and asymptotically ignored when $N$ is large) due to the finite-agent mean-field bias (as shown previously in Section~\ref{sec:theoretical_tool}), the term $\Exop\left[e_t^i\right]$ must be controlled separately in the independent learning setting, where policies cannot be synchronized through explicit communication between agents.
The term $\Exop\left[e_t^i\right]$ reflects the core difference of the SMFG model from typical VI stochastic oracles.
Unlike typical VI oracle models, in SMFG the operator $\vecF$ cannot be evaluated at the current iterate $\vecpi^i_t$ of a player $i$  but only approximately at the mean $\bar{\vecmu}_t$.
This is due to decentralized learning: players can only evaluate the current payoffs at the ``mean-iterate'' given by $\vecF(\widehat{\vecmu}_t) \approx \vecF(\bar{\vecmu}_t)$ (up to some stochastic noise) that is almost surely different than their iterates $\{\vecpi^i_t\}_i$ apart from the case with degenerate/zero noise.
Furthermore, Lemma~\ref{lemma:full_error_recurrence} suggests that the algorithmic scheme must guarantee that $\Exop\left[e_t^i\right]$ decays with the rate at least $\mathcal{O}(\sfrac{1}{t})$ to obtain a non-vacuous bound on exploitability.
Taking inspiration from algorithmic stability literature \citep{ahn2022reproducibility, zhang2024optimal}, we utilize a regularization scheme to ensure the iterates of players do not diverge.
The following lemma shows that by introducing explicit regularization $\tau>0$, the expected mean policy deviation can be controlled throughout training.

\begin{lemma}[Policy variations bound]\label{lemma:policy_variations_bound_trpa_full}
    Under TRPA-Full with learning rates $\eta_t :=\frac{\tau^{-1}}{t+2}$, we have $\Exop\left[e_t^i\right] \leq \frac{14 \tau^{-2} K\sigma^2 + 14}{t+2}$.
\end{lemma}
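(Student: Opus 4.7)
The plan is to exploit the fact that under full feedback every agent sees the \emph{same} payoff signal $\vecF(\widehat{\vecmu}_t)$ up to an independent zero-mean noise vector, so the spread of the agent policies around their mean $\bar{\vecmu}_t$ is driven purely by the independent noise and can be controlled by an $\ell_2$-contraction argument on \emph{pairwise} distances between agents. No inter-agent communication is ever needed, which is precisely what makes the scheme fit the IL setting.

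\textbf{Step 1 -- reduce to pairwise distances.} Introduce $g_t^{ij} := \Exop[\|\vecpi^i_t - \vecpi^j_t\|_2^2]$. Since $\bar{\vecmu}_t = \tfrac{1}{N}\sum_{j=1}^N \vecpi^j_t$, Jensen's inequality gives
\begin{align*}
\Exop[e_t^i] = \Exop\Big[\Big\|\tfrac{1}{N}\sum_{j=1}^N(\vecpi^i_t - \vecpi^j_t)\Big\|_2^2\Big] \leq \frac{1}{N}\sum_{j=1}^N g_t^{ij},
\end{align*}
so it suffices to bound $g_t^{ij}$ uniformly in $i,j$.

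\textbf{Step 2 -- one-step pairwise recursion.} Under full feedback the common $\vecF(\widehat{\vecmu}_t)$ cancels, so $\vecr^i_t - \vecr^j_t = \vecn^i_t - \vecn^j_t$, and non-expansiveness of $\Pi_{\Delta_\setA}$ gives
\begin{align*}
\|\vecpi^i_{t+1} - \vecpi^j_{t+1}\|_2^2 \leq \|(1-\tau\eta_t)(\vecpi^i_t - \vecpi^j_t) + \eta_t(\vecn^i_t - \vecn^j_t)\|_2^2.
\end{align*}
Expanding the square and taking expectation conditional on the sigma-algebra $\sigma(\mathcal{F}_t,\{a^k_t\}_k)$ -- with respect to which $\vecpi^i_t - \vecpi^j_t$ is measurable while $\vecn^i_t, \vecn^j_t$ remain zero-mean and independent by Assumption~\ref{ass:noise}(3) -- kills the cross term, while independence plus entrywise variance at most $\sigma^2$ yields $\Exop[\|\vecn^i_t - \vecn^j_t\|_2^2] \leq 2K\sigma^2$. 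This produces the recursion
\begin{align*}
g_{t+1}^{ij} \leq (1-\tau\eta_t)^2\, g_t^{ij} + 2\eta_t^2 K\sigma^2.
\end{align*}

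\textbf{Step 3 -- telescoping with $\eta_t = \tau^{-1}/(t+2)$.} With this schedule $1-\tau\eta_t = (t+1)/(t+2)$, so multiplying by $(t+2)^2$ turns the recursion into
\begin{align*}
(t+2)^2\, g_{t+1}^{ij} \leq (t+1)^2\, g_t^{ij} + 2\tau^{-2} K\sigma^2.
\end{align*}
Since every agent is initialized at the uniform policy, $g_0^{ij} = 0$, and telescoping gives $(t+1)^2 g_t^{ij} \leq 2t\tau^{-2}K\sigma^2$, equivalently $g_t^{ij} = \mathcal{O}(\tau^{-2}K\sigma^2/(t+2))$. Substituting into Step~1 delivers a bound of the claimed form; the absolute constant on the $\tau^{-2}K\sigma^2$ term and the additive slack are obtained by rounding to accommodate $(1-\tau\eta_t)^2 \leq 1$ and the trivial $\ell_2$-diameter bound on $\Delta_\setA$ for the first few rounds.

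\textbf{Main obstacle.} The delicate point is that $\vecpi^i_t, \vecpi^j_t$ are themselves random functions of past noise, so the cross term $\Exop[(\vecpi^i_t - \vecpi^j_t)^\top(\vecn^i_t - \vecn^j_t)]$ is not tautologically zero. Assumption~\ref{ass:noise}(3) is exactly what allows conditioning on $\mathcal{F}_t$ together with the current actions and eliminating the cross term; once that is done, the proof reduces to the one-step telescoping above, and the choice $\eta_t \propto 1/(t+2)$ is the natural schedule that makes the $(t+2)^2$-weighted recursion telescope cleanly.
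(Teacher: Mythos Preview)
Your proof is correct and follows the same high-level strategy as the paper: reduce $\Exop[e_t^i]$ to pairwise expected squared distances via Jensen, use non-expansiveness of $\Pi_{\Delta_\setA}$, exploit that the common signal $\vecF(\widehat{\vecmu}_t)$ cancels so that $\vecr^i_t-\vecr^j_t=\vecn^i_t-\vecn^j_t$, and kill the cross term by conditioning (the paper conditions on $\mathcal{F}_t$; your conditioning on $\sigma(\mathcal{F}_t,\{a^k_t\}_k)$ is fine too).

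The one genuine difference is in how the recursion is solved. The paper expands $(1-\tau\eta_t)^2 \leq 1-\tfrac{2}{t+2}+\tfrac{1}{(t+2)^2}$, absorbs the extra $\tfrac{1}{(t+2)^2}\Exop[\|\vecpi^i_t-\vecpi^j_t\|_2^2]$ via the simplex diameter bound, and then invokes the general recurrence Lemma~\ref{lemma:general_recurrence}, which produces the constant $14$ and the $+14$ additive slack. Your Step~3 instead keeps $(1-\tau\eta_t)^2=\big(\tfrac{t+1}{t+2}\big)^2$ exactly, multiplies by $(t+2)^2$, and telescopes directly. This is more elementary and in fact sharper: it yields $g_t^{ij}\leq \tfrac{2t\,\tau^{-2}K\sigma^2}{(t+1)^2}\leq \tfrac{2\tau^{-2}K\sigma^2}{t+2}$, i.e.\ a constant of $2$ rather than $14$ and no additive term at all. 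Consequently your final sentence about ``rounding to accommodate $(1-\tau\eta_t)^2\leq 1$ and the trivial $\ell_2$-diameter bound'' is unnecessary for your own argument---those are artifacts of the paper's route through Lemma~\ref{lemma:general_recurrence}, not of yours.
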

\begin{proof}
Note that for any $i,j\in\setN$ such that $i\neq j$, using the non-expansiveness of the projection operator it holds that
\begin{align*}
    \| \vecpi^i_{t+1} - \vecpi^j_{t+1} \|_2^2 = &  \| \Pi_{\Delta_\setA}((1 - \tau \eta_t) \vecpi^i_t + \eta_t \vecr_t^i) - \Pi_{\Delta_\setA}((1 - \tau \eta_t) \vecpi^j_t + \eta_t \vecr_t^j) \|_2^2 \\
    \leq & \| (1 - \tau \eta_t) \vecpi^i_t + \eta_t \vecr_t^i - (1 - \tau \eta_t) \vecpi^j_t - \eta_t \vecr_t^j \|_2^2 \\
    \leq & \| (1 - \tau \eta_t) (\vecpi^i_t - \vecpi^j_t) + \eta_t (\vecr_t^i - \vecr_t^j) \|_2^2 \\
    = & (1 - \tau \eta_t)^2 \| \vecpi^i_t - \vecpi^j_t \|_2^2 + \eta_t^2 \|  \vecr_t^i - \vecr_t^j \|_2^2 + 2 (1 - \tau \eta_t) \eta_t (\vecpi^i_t - \vecpi^j_t) ^ \top ( \vecr_t^i - \vecr_t^j )
\end{align*}
Taking the conditional expectation on both sides, we obtain
\begin{align*}
    \Exop \left[ \| \vecpi^i_{t+1} - \vecpi^j_{t+1} \|_2^2 | \mathcal{F}_t \right] \leq & (1 - \tau \eta_t)^2 \| \vecpi^i_t - \vecpi^j_t \|_2^2 + \Exop \left[ \eta_t^2 \|  \vecr_t^i - \vecr_t^j \|_2^2 | \mathcal{F}_t \right] \\
        &+ 2 (1 - \tau \eta_t) \eta_t (\vecpi^i_t - \vecpi^j_t) ^ \top \Exop\left[\vecr_t^i - \vecr_t^j | \mathcal{F}_t \right] \\
    = & (1 - \tau \eta_t)^2 \|  \vecpi^i_t - \vecpi^j_t \|_2^2 + \eta_t^2 \Exop \left[\|\vecn^i_t - \vecn^j_t\|_2^2 | \mathcal{F}_t \right] \\
    = & (1 - \tau \eta_t)^2 \|  \vecpi^i_t - \vecpi^j_t \|_2^2 + 2\eta_t^2 K \sigma^2
\end{align*}
almost surely, since we have $\vecr_t^i := \vecF(\widehat{\vecmu}_t) + \vecn^i_t$.
Then, taking the expectation on both sides, 
\begin{align*}
    \Exop \left[ \| \vecpi^i_{t+1} - \vecpi^j_{t+1} \|_2^2 \right] \leq &(1 - \tau \eta_t)^2 \Exop\left[\|  \vecpi^i_t - \vecpi^j_t \|_2^2\right] + 2\eta_t^2 K\sigma^2 \\
    \leq & \left(1 - \frac{1}{t+2}\right)^2 \Exop\left[\|  \vecpi^i_t - \vecpi^j_t \|_2^2\right] + \left(\frac{\tau^{-1}}{t+2}\right)^2 2K\sigma^2 \\
    \leq & \left(1 - \frac{2}{t+2}\right) \Exop\left[\|  \vecpi^i_t - \vecpi^j_t \|_2^2\right] + \frac{1}{(t+2)^2} \Exop\left[\|  \vecpi^i_t - \vecpi^j_t \|_2^2\right] + \frac{2\tau^{-2}K\sigma^2}{(t+2)^2} \\
    \leq & \left(1 - \frac{2}{t+2}\right) \Exop\left[\|  \vecpi^i_t - \vecpi^j_t \|_2^2\right] + \frac{2\tau^{-2}K\sigma^2 + 2}{(t+2)^2}
\end{align*}
To bound the recurrence, we can use the recurrence lemma (Lemma~\ref{lemma:general_recurrence}, noting $\gamma=2, a = 2, u_0 = 0, c_0 = 0, c_1 = 2\tau^{-2}K\sigma^2 + 2$ in its statement):
\begin{align*}
    \Exop \left[ \| \vecpi^i_{t+1} - \vecpi^j_{t+1} \|_2^2 \right] \leq & 5\frac{2\tau^{-2}K\sigma^2 + 2}{(t+2)^2} + 3\frac{2\tau^{-2}K\sigma^2 + 2}{t+2} + \frac{2\tau^{-2}K\sigma^2 + 2}{(t+2)^2} \leq \frac{14 \tau^{-2} K\sigma^2 + 14}{t+2}.
\end{align*}
Then, the expected values of $e_t^i$ can be bounded using:
\begin{align*}
     e_t^i = &\|\vecpi^i_t - \bar{\vecmu}_t \|_2^2 
     =  \left\|\vecpi^i_t - \frac{1}{N} \sum_{j=1}^N \vecpi^j_t  \right\|_2^2 
     \leq \frac{1}{N} \sum_{j=1}^N \| \vecpi^i_t - \vecpi^j_t \|_2^2
\end{align*}
by an application of Jensen's inequality.
Then we have $\Exop\left[e_t^i\right] \leq \frac{14 \tau^{-2} K\sigma^2 + 14}{t+2}$.
\end{proof}

With an explicit bound in expectation on the mean policy deviation $e_t^i$, we can now proceed to the main recurrence for the expected error terms $u_t^i$ in order to prove our main convergence result.
We state our main convergence result for TRPA-Full dynamics in Theorem~\ref{theorem:expert_short} by solving these two recurrences for the monotone and strongly monotone cases.

\begin{theorem}[Convergence, full feedback]\label{theorem:expert_short}
Assume $\vecF$ is Lipschitz, monotone.
Assume $N$ agents run the TRPA-Full update rule for $T$ time steps with learning rates $\eta_t := \frac{\tau^{-1}}{t+2}$ and arbitrary regularization $\tau>0$.
Then it holds for any $i\in[N]$ that $\Exop\left[ \setE^i_{\text{exp}}( \{\vecpi^j_{T}\}_{j=1}^N ) \right] \leq \mathcal{O} (\frac{\tau^{-2}}{\sqrt{T}}+ \frac{\tau^{-1}}{\sqrt{N}} + \tau)$.
Furthermore, if $\vecF$ is $\lambda$-strongly monotone, then $\Exop\left[ \setE^i_{\text{exp}}( \{\vecpi^j_{T}\}_{j=1}^N ) \right] \leq \mathcal{O} (\frac{\tau^{-\sfrac{3}{2}} \lambda^{-\sfrac{1}{2}}}{\sqrt{T}} + \frac{\tau^{-\sfrac{1}{2}} \lambda^{-\sfrac{1}{2}}}{\sqrt{N}} + \tau)$.
\end{theorem}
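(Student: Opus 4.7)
The plan is to combine the error recurrence of Lemma~\ref{lemma:full_error_recurrence} with the mean-policy-deviation bound of Lemma~\ref{lemma:policy_variations_bound_trpa_full}, solve a scalar recurrence to obtain a bound on $u_T^i := \Exop[\|\vecpi_T^i - \vecpi^*\|_2^2]$, and then translate this $\ell_2$ bound into an exploitability bound through the Lipschitz property of Lemma~\ref{lemma:phi_lipschitz} together with the MF-to-$N$-player approximation of Theorem~\ref{theorem:mfg_ne}.

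First, I would substitute $\eta_t = \tau^{-1}/(t+2)$ and the bound $\Exop[e_t^i] \leq \mathcal{O}(\tau^{-2})/(t+2)$ into the two branches of Lemma~\ref{lemma:full_error_recurrence}. In the $\lambda$-strongly monotone branch, every $\mathcal{O}(1/(t+2)^2)$ term collapses into one of order $\tau^{-3}\lambda^{-1}/(t+2)^2$, the $1/N$-term has order $\tau^{-1}\lambda^{-1}/(N(t+2))$, and the contraction factor is $1 - (\lambda\tau^{-1}+2)/(t+2)$. In the merely monotone branch, I would fix the free Young constant $\delta = 1/4$ so that the contraction becomes $1-(3/2)/(t+2)$, the $1/(t+2)^2$ term has order $\tau^{-4}$, and the $1/(N(t+2))$ term has order $\tau^{-2}$.

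Second, I would invoke a standard recurrence inequality of the same type already used inside the proof of Lemma~\ref{lemma:policy_variations_bound_trpa_full}: whenever $u_{t+1} \leq (1-\gamma/(t+2))u_t + c_2/(t+2)^2 + c_1/(N(t+2))$ with $\gamma > 1$, one obtains $u_T = \mathcal{O}(c_2/T + c_1/N)$ (up to constants depending on $\gamma$). This yields $u_T^i = \mathcal{O}(\tau^{-3}\lambda^{-1}/T + \tau^{-1}\lambda^{-1}/N)$ in the strongly monotone case and $u_T^i = \mathcal{O}(\tau^{-4}/T + \tau^{-2}/N)$ in the monotone case. Jensen's inequality then gives $\Exop[\|\vecpi_T^i - \vecpi^*\|_2] \leq \sqrt{u_T^i}$.

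Third, rather than applying Theorem~\ref{theorem:mfgrvi_and_explotability} as a black box (it is phrased with a pathwise deviation bound $\delta$), I would repeat its argument in expectation: by Lemma~\ref{lemma:phi_lipschitz},
\begin{align*}
\Exop[\setE^i_{\text{exp}}(\{\vecpi_T^j\}_{j=1}^N)] \leq \setE^i_{\text{exp}}(\{\vecpi^*\}_{j=1}^N) + \sqrt{K}\,\Exop[\|\vecpi_T^i-\vecpi^*\|_2] + \frac{4L\sqrt{2K}}{N}\sum_{j\neq i}\Exop[\|\vecpi_T^j-\vecpi^*\|_2],
\end{align*}
where $\vecpi^*$, as a $2\tau$-MF-NE of $\vecF$, makes the first term $\mathcal{O}(\tau+1/\sqrt{N})$ via Theorem~\ref{theorem:mfg_ne}. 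Plugging the square roots of the two $u_T^i$ bounds into the remaining terms produces exactly the advertised rates $\mathcal{O}(\tau^{-2}/\sqrt{T} + \tau^{-1}/\sqrt{N} + \tau)$ and $\mathcal{O}(\tau^{-3/2}\lambda^{-1/2}/\sqrt{T} + \tau^{-1/2}\lambda^{-1/2}/\sqrt{N} + \tau)$.

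The main obstacle is essentially bookkeeping: tracking four parameters $\tau,\lambda,N,T$ through two nested recurrences without losing a factor. The one genuinely delicate point is the choice of the Young constant $\delta$ in the monotone branch of Lemma~\ref{lemma:full_error_recurrence}: it must be picked so that the effective contraction coefficient $\gamma = 2(1-\delta)$ strictly exceeds $1$ (so the recurrence lemma applies with a $1/T$ rate), which is precisely what forces the $\tau$ dependence in the $1/(t+2)^2$ perturbation to jump from $\tau^{-3}$ (strongly monotone) to $\tau^{-4}$ (monotone), and accounts for the gap in the two rates after taking square roots.
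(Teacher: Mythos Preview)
Your proposal is correct and follows essentially the same approach as the paper: substitute the policy-deviation bound of Lemma~\ref{lemma:policy_variations_bound_trpa_full} into the recurrence of Lemma~\ref{lemma:full_error_recurrence}, solve the resulting scalar recurrence (the paper uses Lemma~\ref{lemma:general_recurrence}, with the same choice $\delta=\sfrac{1}{4}$ in the monotone branch), and then convert $\sqrt{u_T^i}$ into an exploitability bound via Lemma~\ref{lemma:phi_lipschitz} and Theorem~\ref{theorem:mfg_ne}. Your observation that Theorem~\ref{theorem:mfgrvi_and_explotability} must be rerun in expectation rather than applied as a black box is exactly what the paper does in the first display of its proof.
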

\begin{proof}
Note that the exploitability in the main statement of the theorem can be related to $u_t^i$ as follows using Lemma~\ref{lemma:phi_lipschitz}:
\begin{align*}
    \Exop[\setE^i_{\text{exp}}(\{\vecpi^j_{t}\}_{j=1}^N)] \leq &\setE^i_{\text{exp}}(\{\vecpi^*\}_{j=1}^N) + \sqrt{K} \Exop[\| \vecpi_t^i - \vecpi^* \|_2] + \frac{4L\sqrt{2K}}{N} \sum_{j\neq i} \Exop[\| \vecpi_t^j - \vecpi^* \|_2] \\
    \leq & \setE^i_{\text{exp}}(\{\vecpi^*\}_{j=1}^N) + \sqrt{K} \sqrt{u_t^i} +  \frac{4L\sqrt{2K}}{N} \sum_{j\neq i} \sqrt{u_t^j} \\
    \leq & \setE^i_{\text{exp}}(\{\vecpi^*\}_{j=1}^N) + \frac{\max\{ \sqrt{K}, 4L\sqrt{2K} \}}{N} \sum_{j} \sqrt{u_t^j}
\end{align*}
Hence the bounds on $u_t^j$ will yield the result of the theorem by linearity of expectation, along with an invocation of Theorem~\ref{theorem:mfgrvi_and_explotability}.

Finally, we solve the recurrences for $\lambda = 0$ and $\lambda > 0$ using Lemma~\ref{lemma:full_error_recurrence}.
For the case $\lambda > 0$, if $\eta_t=\frac{\tau^{-1}}{t+2}$, Lemma~\ref{lemma:full_error_recurrence} provides the bound 
\begin{align*}
    u_{t+1}^i \leq &\frac{ 3\tau^{-2} K(1 + \sigma^2) + 2\tau^{-2}(L+\tau)^2}{(t+2)^2} + \frac{4\tau^{-1} L^2 \lambda^{-1}}{ N (t+2)} + \frac{2\tau^{-1} L^2 \lambda^{-1}}{t+2} \Exop\left[e_t^i\right] \\
        &+ \left(1 - \frac{2 \tau^{-1}(\sfrac{\lambda}{2} + \tau)}{t+2}\right) u_{t}^i. 
\end{align*}
By placing $\Exop\left[ e^i_t\right] \leq \frac{14 \tau^{-2} K\sigma^2 + 14}{t+2}$ due to Lemma~\ref{lemma:policy_variations_bound_trpa_full}, we obtain
\begin{align*}
    u_{t+1}^i \leq &\frac{ 3\tau^{-2}K(1 + \sigma^2) + 2\tau^{-2}(L+\tau)^2 +  28\tau^{-3} K \sigma^2 \lambda^{-1} L^2 + 28 \tau^{-1} L^2 \lambda^{-1}}{(t+2)^2} \\
        &+ \frac{4\tau^{-1} L^2 \lambda^{-1}}{ N (t+2)} + \left(1 - \frac{2}{t+2}\right) u_{t}^i.
\end{align*}
Invoking a generic recurrence lemma (Lemma~\ref{lemma:general_recurrence} in Appendix~\ref{app:basic_inequalities}) leads to the main statement of the theorem.

For the monotone case $\lambda = 0$, we have the recursion:
\begin{align*}
    u_{t+1}^i \leq &\frac{ 3\tau^{-2}K(1 + \sigma^2) + 2\tau^{-2}(L+\tau)^2}{(t+2)^2} + \frac{4\tau^{-2} L^2 \delta^{-1}}{ N (t+2)} + \frac{ \tau^{-2} L^2\delta^{-1}}{ t+2 } \Exop\left[e_t^i\right] \\
        & + \left(1 - \frac{2 (1-\delta)}{t+2}\right) u_{t}^i.
\end{align*}
and once again placing the upper bound on expected policy deviation due to Lemma~\ref{lemma:policy_variations_bound_trpa_full},
\begin{align*}
    u_{t+1}^i \leq &\frac{ 3\tau^{-2}K(1 + \sigma^2) + 2\tau^{-2}(L+\tau)^2 + 28 K \tau^{-4} L^2\delta^{-1}\sigma^2 +28 \tau^{-2} L^2\delta^{-1}}{(t+2)^2} \\
        &+ \frac{2\tau^{-2} L^2 \delta^{-1}}{ N (t+2)} + \left(1 - \frac{2 (1-\delta)}{t+2}\right) u_{t}^i.
\end{align*}
Another invocation of Lemma~\ref{lemma:general_recurrence} concludes the proof, choosing $\delta=\sfrac{1}{4}$.
\end{proof}

This convergence result is stated in terms of exploitability of the unregularized game, leading to an additional $\mathcal{O}(\tau)$ term.
However, in many cases, the Nash equilibrium of the regularized game itself is of interest, in which case the upper bounds should read
$\mathcal{O} (\frac{\tau^{-2}}{\sqrt{T}}+ \frac{\tau^{-1}}{\sqrt{N}})$
and
$\mathcal{O} (\frac{\tau^{-\sfrac{3}{2}} \lambda^{-\sfrac{1}{2}}}{\sqrt{T}} + \frac{\tau^{-\sfrac{1}{2}} \lambda^{-\sfrac{1}{2}}}{\sqrt{N}})$
for the monotone and strongly monotone cases respectively.

In the choice of learning rate $\eta_t$ above, no intrinsic problem parameter is assumed to be known.
Furthermore, due to (1) the regularization $\tau$ and (2) a finite population, a non-vanishing exploitability of $\mathcal{O}(\tau + \sfrac{\tau^{-1}}{\sqrt{N}})$ will be induced in terms of the NE in the monotone case.
While Theorem~\ref{theorem:mfgrvi_and_explotability} readily suggested a bias of order $\mathcal{O}(\sfrac{1}{\sqrt{N}})$ is fundamental, when learning is conducted with finitely many agents Theorem~\ref{theorem:expert_short} shows this is amplified to $\mathcal{O}(\sfrac{\tau^{-1}}{\sqrt{N}})$.
Since for finite population SMFG, there will always be a non-vanishing exploitability in terms of NE due to the mean-field approximation, in practice $\tau$ could be chosen to incorporate an acceptable bias level.
Alternatively, if the exact value of the number of players $N$ is known by each agent, one could choose $\tau$ optimally, to obtain the following corollary.

\begin{corollary}[Optimal $\tau$, full feedback]\label{corollary:expert}
Assume the conditions of Theorem~\ref{theorem:expert_short}.
For monotone $\vecF$, choosing regularization parameter $\tau = \sfrac{1}{\sqrt[4]{N}}$ yields
$\Exop\left[\setE^i_{\text{exp}}(\{\vecpi^j_T\}_{j=1}^N) \right] \leq \mathcal{O}(\frac{\sqrt{N}}{\sqrt{T}} + \frac{1}{\sqrt[4]{N}})$ for any $i$.
For $\lambda$-strongly monotone $\vecF$, choosing $\tau = \sfrac{1}{\sqrt[3]{N}}$ yields $\Exop\left[\setE^i_{\text{exp}}(\{\vecpi^j_T\}_{j=1}^N) \right] \leq \mathcal{O}(\frac{ \lambda^{-\sfrac{1}{2}} \sqrt{N}}{\sqrt{T}} + \frac{\lambda^{-\sfrac{1}{2}}}{\sqrt[3]{N}})$.
\end{corollary}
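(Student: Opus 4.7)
The plan is to simply specialize the general bounds of Theorem~\ref{theorem:expert_short} to the prescribed choices of $\tau$ and verify that the stated rates fall out by elementary algebra. No new analytic ingredient is needed; the only task is to balance the three terms appearing in the exploitability bound (two of which decay with $T$ or $N$ while the third, $\mathcal{O}(\tau)$, \emph{grows} with decreasing regularization) so that the overall $N$-dependence is optimized.

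For the monotone case, I would start from the bound
$\Exop[\setE^i_{\text{exp}}(\{\vecpi^j_T\}_{j=1}^N)] \leq \mathcal{O}(\tau^{-2} T^{-1/2} + \tau^{-1} N^{-1/2} + \tau)$
supplied by Theorem~\ref{theorem:expert_short}. The first term is $T$-dependent and will vanish as $T\to\infty$, so the optimal $\tau$ should be chosen by balancing the residual bias $\tau^{-1} N^{-1/2} + \tau$. Setting these equal gives $\tau^2 = N^{-1/2}$, i.e.\ $\tau = N^{-1/4}$. Plugging this choice back yields $\tau^{-2}T^{-1/2} = \sqrt{N}/\sqrt{T}$ for the first term, while the remaining two terms both become $N^{-1/4}$, producing the claimed rate $\mathcal{O}(\sqrt{N}/\sqrt{T} + N^{-1/4})$.

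For the $\lambda$-strongly monotone case, the starting bound is
$\Exop[\setE^i_{\text{exp}}(\{\vecpi^j_T\}_{j=1}^N)] \leq \mathcal{O}(\tau^{-3/2} \lambda^{-1/2} T^{-1/2} + \tau^{-1/2} \lambda^{-1/2} N^{-1/2} + \tau)$.
Again balancing the two $\tau$-sensitive bias terms $\tau^{-1/2} N^{-1/2}$ and $\tau$ gives $\tau^{3/2} = N^{-1/2}$, hence $\tau = N^{-1/3}$. Substituting back yields $\tau^{-3/2}\lambda^{-1/2}T^{-1/2} = \lambda^{-1/2}\sqrt{N}/\sqrt{T}$ and $\tau^{-1/2}\lambda^{-1/2}N^{-1/2} = \lambda^{-1/2} N^{-1/3}$, with the $\mathcal{O}(\tau) = \mathcal{O}(N^{-1/3})$ term absorbed into the latter, giving the stated bound.

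There is no real obstacle in this argument; it is purely an optimization of a three-term envelope with respect to $\tau$. The only mild subtlety worth a brief sentence in the write-up is that we do \emph{not} attempt to optimize the $T$-dependent term jointly (it vanishes as $T\to\infty$ for any fixed $\tau>0$), so balancing is performed only between the $N$-bias term and the regularization bias, which is exactly what the exponents $1/4$ and $1/3$ reflect. The corollary therefore follows immediately by substitution into Theorem~\ref{theorem:expert_short}.
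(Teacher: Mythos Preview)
Your proposal is correct and matches the paper's approach: the corollary is stated without an explicit proof in the paper, as it follows immediately by substituting the prescribed values of $\tau$ into the bounds of Theorem~\ref{theorem:expert_short}. Your balancing argument and the resulting arithmetic are exactly what is required.
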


Even though TRPA-Full solves the regularized (hence strongly monotone) problem, compared to the $\mathcal{O}(\sfrac{1}{T})$ rate in classical strongly monotone VI \citep{kotsalis2022simple} or strongly convex optimization \citep{rakhlin2011making},
our worse $\mathcal{O}(\sfrac{1}{\sqrt{T}})$ time dependence is due to independent learning.
Intuitively, additional time is required to ensure the policies of independent learners are sufficiently close when ``collectively'' evaluating $\vecF$.
The additional dependence of the time-vanishing term on $\sqrt{N}$ is also a result of this fact.
Furthermore, when learning itself is performed by $N$ agents, we note that the bias as a function of $N$ decreases with $\mathcal{O}(\sfrac{1}{\sqrt[4]{N}})$ (or $\mathcal{O}(\sfrac{1}{\sqrt[3]{N}})$ for strongly monotone problems), and not with $\mathcal{O}(\sfrac{1}{\sqrt{N}})$ as Theorem~\ref{theorem:mfg_ne} might suggest.
We leave the question of whether this gap can be improved and whether knowledge of $N$ is required in Corollary~\ref{corollary:expert}, as future work.

\section{Convergence in the Bandit Feedback Case}\label{sec:bandit_feedback_results}

We now move on to the more challenging and realistic bandit feedback case, where agents can only observe the payoffs of the actions they have chosen.
Once again, we analyze the IL setting (or in bandits terminology, the ``no communications'' setting) where agents can not interact or coordinate with each other.
One of the main challenges of bandit feedback with IL in our setting is that it is difficult for each agent to identify itself (i.e., assign itself a unique number between $1,\ldots,N$) so that exploration of action payoffs can be performed in turns.
For instance, in MMAB algorithms, this is typically achieved using variants of the so-called musical chairs algorithm \citep{lugosi2022multiplayer}, which is not available in our formulation.
Instead, we adopt a \emph{probabilistic} exploration scheme where each agent probabilistically decides it is its turn to explore payoffs while the rest of the agents induce the required empirical population distribution on which $\vecF$ should be evaluated.

Our algorithm, which we call TRPA-Bandit, is straightforward and relies on exploration occurring over epochs, where policies are updated once in between epochs using the estimate of action payoffs constructed during the exploration phase.
We use the subscript $h$ to index epochs, which consist of $T_h$ repeated plays indexed by $(h,t)$ for $t=1,\ldots,T_h$.
While we formally presented TRPA-Bandit (Algorithm~\ref{alg:bandit}), the procedure informally is as follows for each agent, fixing an exploration parameter $\varepsilon \in (0,1)$ and an agent $i\in\setN$:
\begin{enumerate}
    \item At each epoch $h$, for $T_h > 0$ time steps, repeat the following:
    \begin{enumerate}
        \item With probability $\varepsilon$, sample uniformly an action $a^i_{h,t}$, observe the payoff $r^i_{h,t}$, and keep the importance sampling estimate $\widehat{\vecr}^i_h \leftarrow K r_{h,t}^i \vece_{a^i_{h,t}}$.
        \item Otherwise (with probability $1-\varepsilon$), sample action according to current policy $\vecpi^i_h$.
    \end{enumerate}
    \item Update the policy using TRPA, $\vecpi^i_{h+1} = \Pi_{\Delta_\setA} ( (1 - \tau \eta_h) \vecpi^i_h + \eta_h \widehat{\vecr}^i_h )$.
    If the agent did not explore this epoch, use $\widehat{\vecr}^i_h = \veczero$.
\end{enumerate}
Intuitively, the probabilistic sampling scheme allows some agents to build a low-variance estimate of $\vecF$, while others simply sample actions with their current policy in order to induce the empirical population distribution at which $\vecF$ should be evaluated.

\begin{algorithm}
    \caption{TRPA-Bandit: IL with bandit feedback algorithm for each agent $i\in\setN$.}\label{alg:bandit}
    \begin{algorithmic}
    \Require Number of actions $K$, regularization $\tau > 0$, exploration probability $\varepsilon > 0$, number of epochs $H$, epoch lengths $\{T_h\}_h$, learning rates $\{\eta_h\}_h$
    \State $\vecpi^i_0 \leftarrow \frac{1}{K} \vecone$
    \For{$h = 0, \ldots, H-1$}
    \State $\widehat{\vecr}^i_h \leftarrow \veczero$ %
    \For{$t = 1, \ldots, T_h$} \Comment{Exploration for $T_h$ rounds before policy update,}
    \State Sample Bernoulli r.v. $X_{h,t}^i \sim \operatorname{Ber}(\varepsilon)$.
    \If{$X_{h,t}^i=1$}
        \State \text{Play action $a^i_{h,t} \sim \operatorname{Unif}(\setA)$ uniformly at random}.
        \Comment{Explore with prob. $\varepsilon$,}
        \State \text{Observe payoff $r^i_{h,t}$}, set  $\widehat{\vecr}^i_h \leftarrow K r^i_{h,t}\vece_{a^i_{h,t}}$.
    \ElsIf{$X_{h,t}^i=0$}
        \State \text{Play action with current policy $a^i_{h,t}\sim \vecpi^i_h$}.
        \Comment{Else, play the current policy.}
    \EndIf
    \EndFor
    \State $\vecpi^i_{h+1} = \Pi_{\Delta_\setA} ( (1 - \tau \eta_h) \vecpi^i_h + \eta_h \widehat{\vecr}^i_h )$
    \Comment{After each epoch, update policy.}
    \EndFor
    \State Return $\vecpi^i_H$
    \end{algorithmic}
    \end{algorithm}

Similar to the full feedback setting, we introduce useful notation used throughout this chapter.
We define the sigma algebra $\mathcal{F}_{h} := \mathcal{F}(\{ \vecpi_{h'}^i \}_{h'=0, \ldots, h}^{i=1, \ldots, N})$.
Adapting the notation from Section~\ref{sec:game_initial_formulation} to the case with multiple epochs, we use
\begin{align*}
\widehat{\vecmu}_{h, t} := \frac{1}{N} \sum_{i=1}^N \vece_{a_{h, t}^i}, \qquad
    \vecr^i_{h, t} := \vecF(\widehat{\vecmu}_{h, t}) + \vecn_{h, t}^i,
\end{align*}
where the updated time indices $h, t$ simply refer to the $t$-th round of play in epoch $h$, and $a_{h, t}^i$ is the action played by player $i$ at epoch $h$, round $t$.
Under the dynamics of Algorithm~\ref{alg:bandit}, we define the following random variables to assist our proofs:
\begin{align*}
    \bar{\vecmu}_h &:= \frac{1}{N} \sum_{i=1}^N \vecpi_h^i, 
    \qquad
    e_t^i := \|\vecpi^i_h - \bar{\vecmu}_h \|_2^2, 
    \qquad
    u_h^i := \Exop\left[\| \vecpi_h^i - \vecpi^* \|_2^2\right],
\end{align*}
which correspond to the mean policy at epoch $h$, the policy deviation of agent $i$ from the mean at epoch $h$ and the $\ell_2$ distance from the MF-NE.
Note that since policies are updated only in between epochs, the above quantities are indexed by epochs $h$ rather than rounds $h, t$.

Our analysis follows the ideas in the case of expert feedback, the main difference being randomization due to the exploration probabilities and the errors being analyzed per epoch rather than per round.
Similar to the full feedback setting, we will proceed in several steps expressed as intermediate lemmas:
(1) we bound the added bias and variance due to the importance sampling strategy,
(2) we obtain a non-linear recursion for the expectation of the terms $e_t^i$ and possible sampling bias, 
(3) we bound the expected differences of each agent's action probabilities $e_t^i$, showing the deviation of the policies of the agents goes to zero in expectation, 
(4) we solve the recursion to obtain the convergence rate.

The next result, Lemma~\ref{lemma:exploration_bias_trpa_bandit}, provides an answer to the first step.
We show that despite the probabilistic exploration step, the estimates $\widehat{\vecr}_h^i$ in Algorithm~\ref{alg:bandit} have low bias and variance.

\begin{lemma}[Exploration bias]\label{lemma:exploration_bias_trpa_bandit}
Under the dynamics of TRPA-Bandit, it holds almost surely for each epoch $h \geq 0$ that
\begin{align*}
    \| \Exop[\widehat{\vecr}_h^i | 
 \mathcal{F}_h ] - \vecF(\varepsilon \frac{1}{K}\vecone + (1-\varepsilon) \bar{\vecmu}_h) \|_2 \leq K^{\sfrac{3}{2}} \sqrt{1 + \sigma^2} \exp\left\{ -\varepsilon T_{h}\right\} + \frac{2L}{N} + \frac{2L}{\sqrt{N}}.
\end{align*}
\end{lemma}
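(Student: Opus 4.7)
The plan is to split the expectation by conditioning on the event $E := \{X^i_{h,t}=1 \text{ for some } t\in\{1,\ldots,T_h\}\}$ that agent $i$ explores at least once during epoch $h$. On $\overline{E}$ the algorithm leaves $\widehat{\vecr}_h^i = \veczero$, so
\[
\Exop[\widehat{\vecr}_h^i \mid \mathcal{F}_h] = \Prob(E\mid\mathcal{F}_h)\,\Exop[\widehat{\vecr}_h^i \mid E, \mathcal{F}_h],
\]
and since the flags $\{X^i_{h,t}\}_t$ are i.i.d.\ $\mathrm{Ber}(\varepsilon)$ and independent of $\mathcal{F}_h$, one has $\Prob(\overline{E}\mid\mathcal{F}_h)=(1-\varepsilon)^{T_h}\leq e^{-\varepsilon T_h}$. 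Combined with a second-moment bound $\Exop[\|\widehat{\vecr}_h^i\|_2^2 \mid E, \mathcal{F}_h] \leq K^2(1+\sigma^2)$ (since $\widehat{\vecr}_h^i = K r^i_{h,\tau}\vece_{a^i_{h,\tau}}$ with $\Exop[(r^i_{h,\tau})^2]\leq 1+\sigma^2$), a triangle-inequality decomposition will account for the first term of the bound, namely $K^{3/2}\sqrt{1+\sigma^2}\,e^{-\varepsilon T_h}$.

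The remaining task is to bound the conditional bias of the importance-sampling estimate given $E$. The key structural observation that unblocks the analysis is that, by mutual independence of $\{X^i_{h,t}\}$, $\{X^j_{h,t}\}_{j\neq i}$, and the action draws across rounds, the conditional joint law of $(a^i_{h,\tau},\{a^j_{h,\tau}\}_{j\neq i})$ at the random last exploration round $\tau := \max\{t: X^i_{h,t}=1\}$ is the same as at any fixed round: $a^i_{h,\tau}\sim\mathrm{Unif}(\setA)$ with independent $a^j_{h,\tau}\sim \varepsilon\tfrac{1}{K}\vecone+(1-\varepsilon)\vecpi^j_h$ for each $j\neq i$. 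This single-round reduction, combined with the uniform draw of $a^i_{h,\tau}$, yields the importance-sampling identity
\[
\Exop[\widehat{\vecr}_h^i \mid E, \mathcal{F}_h] = \sum_{a\in\setA}\Exop\bigl[\vecF(\widehat{\vecmu}_{h,\tau})(a) \bigm| a^i_{h,\tau}=a,\,\mathcal{F}_h\bigr]\vece_a,
\]
so it remains to compare each coordinate to $\vecF(\varepsilon\tfrac{1}{K}\vecone + (1-\varepsilon)\bar{\vecmu}_h)(a)$.

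The last step is a standard finite-$N$ mean-field approximation. Setting $\vecxi := (1-\varepsilon)\vecpi^i_h + \varepsilon\tfrac{1}{K}\vecone$, a direct computation shows that the conditional mean bias $\Exop[\widehat{\vecmu}_{h,\tau}\mid a^i_{h,\tau}=a] - (\varepsilon\tfrac{1}{K}\vecone + (1-\varepsilon)\bar{\vecmu}_h) = \tfrac{1}{N}(\vece_a - \vecxi)$ is at most $2/N$ in $\ell_2$, while the conditional variance of $\widehat{\vecmu}_{h,\tau}$ (a sum of $N-1$ independent indicator vectors) has expected squared $\ell_2$ norm at most $2/N$. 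Combining these through Jensen and the triangle inequality and applying $L$-Lipschitz continuity of $\vecF$ will give the $\tfrac{2L}{N} + \tfrac{2L}{\sqrt{N}}$ term. The main obstacle is the stopping-time nature of $\tau$, which is cleanly resolved by the independence-based reduction to a single representative round, after which everything is a routine bias-plus-variance calculation for an importance-weighted estimator.
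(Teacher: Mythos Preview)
Your proposal is correct and follows essentially the same route as the paper. Both arguments (i) condition on the event that at least one exploration occurs in the epoch, bounding the residual by $(1-\varepsilon)^{T_h}\le e^{-\varepsilon T_h}$ times a crude norm bound on the estimator, and (ii) reduce the conditional expectation given exploration to a single ``representative'' round via independence, then control the remaining bias by a finite-$N$ mean-field argument using Lipschitz continuity of $\vecF$. Your independence-based reduction at the random last exploration time $\tau$ is exactly the paper's coupling with auxiliary variables $\bar a_h^j\sim\varepsilon\tfrac{1}{K}\vecone+(1-\varepsilon)\vecpi_h^j$ and $\bar a_{h,\mathrm{exp}}^i\sim\mathrm{Unif}(\setA)$; your explicit bias/variance split ($\tfrac{1}{N}(\vece_a-\vecxi)$ for the mean shift, $\mathcal{O}(1/N)$ for the second moment of $\widehat{\vecmu}$) is the same two-step decomposition the paper phrases as ``replace the exploring agent's action'' (giving $2L/N$) followed by ``empirical versus expected occupancy'' (giving $2L/\sqrt{N}$).
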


The full proof has been postponed to Appendix~\ref{sec:proof_lemma_bandit_exploration_bias}.
In summary, the proof strategy is to decompose and analyze the bias due to the possibility of no exploration round happening (the term $K^{\sfrac{3}{2}} \sqrt{1 + \sigma^2} \exp\left\{ -\varepsilon T_{h}\right\}$),  the impact of the exploring agent on payoffs (the term $\frac{2L}{N}$), and bias due to the finitely many agents, similar to Theorem~\ref{theorem:mfg_ne} (the term $\frac{2L}{\sqrt{N}}$).
The additional bias due to probabilistic exploration originates from the possibility that no exploration occurs in a given epoch: the probability of this event can be bounded by $\exp\left\{ -\varepsilon T_{h}\right\}$.

Lemma~\ref{lemma:exploration_bias_trpa_bandit} shows that even if the players do not have full feedback, they can obtain low-bias, low-variance estimates of $\vecF(\varepsilon \frac{1}{K}\vecone + (1-\varepsilon) \bar{\vecmu}_h) \approx \vecF(\bar{\vecmu}_h)$ when $\varepsilon$ is small.
It guarantees that even if the agents do not explore each epoch, in expectation the probabilistic exploration scheme yields a low bias if the epoch lengths $T_h$ are logarithmically large: hence, full feedback can be simulated by paying a logarithmic cost.
Therefore, in our epoched exploration scheme, the bias in ``querying'' the payoff operator $\vecF$ due to an exploring population can be controlled by tuning $\varepsilon$ and $T_h$.

We next state the error recursion in Lemma~\ref{lemma:bandit_main_recurrence}, which uses the result of Lemma~\ref{lemma:exploration_bias_trpa_bandit} to construct the main recurrence for the bandit feedback case.

\begin{lemma}[Main recurrence for TRPA-Bandit]\label{lemma:bandit_main_recurrence}
Under TRPA-Bandit dynamics, it holds for any $i \in \{1, \ldots, N \}$ and each epoch $h\geq 0$ that
\begin{align*}
    \Exop\left[\| \vecpi_{h+1}^i - \vecpi^* \|_2^2\right] \leq & 4 \eta_h^2 K^3(1 + \sigma^2) + 8\eta_h^2(L+\tau)^2 + 8 K^{\sfrac{3}{2}}\eta_h \sqrt{1+\sigma^2}  \exp\{-\varepsilon T_h\} \\
        &+128\eta_h\lambda^{-1} L^2 N^{-1} + 16\eta_h\lambda^{-1}L^2\varepsilon^2 + 2\eta_h\lambda^{-1} L^2 \Exop\left[e_h^i\right] \\
        &+\left(1 - 2 \eta_h(\sfrac{\lambda}{2} + \tau)\right) \Exop\left[\| \vecpi_h^i - \vecpi^* \|_2^2\right],
\end{align*}
for strongly monotone $\lambda >0$ payoffs, and
\begin{align*}
    \Exop\left[\| \vecpi_{h+1}^i - \vecpi^* \|_2^2\right] \leq &  4\eta_h^2 K^3(\sigma^2 + 1) + 8 \eta_h^2 (L+\tau)^2 + 8 K^{\sfrac{3}{2}} \eta_h \sqrt{1+\sigma^2} \exp\{-\varepsilon T_h\} \\
    &+64\tau^{-1} \eta_h \delta^{-1}L^2 N^{-1}+8\tau^{-1} \eta_h \delta^{-1}L^2 \varepsilon^{2} + \tau^{-1} \eta_h\delta^{-1}L^2 \Exop\left[e_h^i\right] \\  
        &+ \left(1 - 2 \tau \eta_h (1-\delta)\right) \Exop\left[\| \vecpi_h^i - \vecpi^* \|_2^2\right],
\end{align*}
for monotone payoffs for arbitrary $\delta \in (0,1)$.
\end{lemma}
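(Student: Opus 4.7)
The plan is to mirror the proof of Lemma~\ref{lemma:full_error_recurrence}, replacing the (conditionally) unbiased observation $\vecr_t^i$ by the importance-sampling estimator $\widehat{\vecr}_h^i$ and using Lemma~\ref{lemma:exploration_bias_trpa_bandit} to absorb the resulting bias. Writing $\alpha_h := 1 - \tau\eta_h$, the fixed-point identity $\Pi_{\Delta_{\setA}}((1-\tau\eta_h)\vecpi^* + \eta_h\vecF(\vecpi^*)) = \vecpi^*$ and the non-expansiveness of the projection give
\begin{align*}
\|\vecpi_{h+1}^i - \vecpi^*\|_2^2 \leq \|\alpha_h(\vecpi_h^i - \vecpi^*) + \eta_h(\vecF(\vecpi_h^i) - \vecF(\vecpi^*)) + \eta_h(\widehat{\vecr}_h^i - \vecF(\vecpi_h^i))\|_2^2.
\end{align*}
Expanding the square produces three summands $(a), (b), (c)$ analogous to the decomposition \eqref{ineq:decomp_abc_full_recur}.

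Terms $(a)$ and $(c)$ are routine. For $(a)$, the importance-weighted estimator satisfies $\|\widehat{\vecr}_h^i\|_2 \leq K(1+|\mathrm{noise}|)$ since only one coordinate is non-zero and is scaled by $K$; bounding $\Exop[\|\widehat{\vecr}_h^i\|_2^2]$ this way gives the $K^3(1+\sigma^2)$ factor in the statement. Term $(c)$ is handled exactly as in the proof of Lemma~\ref{lemma:full_error_recurrence} via Lemma~\ref{lemma:contraction_pg}, producing $(1 - 2(\lambda+\tau)\eta_h)\|\vecpi_h^i - \vecpi^*\|_2^2 + 2(L+\tau)^2\eta_h^2$.

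The main obstacle is the cross term $(b) = 2\eta_h\alpha_h(\vecpi_h^i - \vecpi^*)^\top(\widehat{\vecr}_h^i - \vecF(\vecpi_h^i))$, since now $\Exop[\widehat{\vecr}_h^i \mid \mathcal{F}_h]$ does not equal $\vecF(\bar{\vecmu}_h)$. I would split $\widehat{\vecr}_h^i - \vecF(\vecpi_h^i)$ into four pieces by inserting $\pm\Exop[\widehat{\vecr}_h^i \mid \mathcal{F}_h]$, $\pm\vecF(\varepsilon K^{-1}\vecone + (1-\varepsilon)\bar{\vecmu}_h)$, and $\pm\vecF(\bar{\vecmu}_h)$. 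The first piece is conditionally mean-zero and vanishes after taking expectations. The second piece is controlled in $\ell_2$-norm by Lemma~\ref{lemma:exploration_bias_trpa_bandit} (order $K^{\sfrac{3}{2}}\sqrt{1+\sigma^2}\exp(-\varepsilon T_h) + L/N + L/\sqrt{N}$) and paired via Cauchy--Schwarz with the trivial bound $\|\vecpi_h^i - \vecpi^*\|_2 \leq \sqrt{2}$ to yield the $\exp(-\varepsilon T_h)$ and $N^{-1}$ contributions. The third piece is $\ell_2$-bounded by $2L\varepsilon$ via Lipschitzness, and the fourth by $L\sqrt{e_h^i}$; for both I apply Young's inequality with the same coupling constant $\lambda/8$ used in the full-feedback proof, so that the resulting $\|\vecpi_h^i - \vecpi^*\|_2^2$ penalties combine with the contraction from $(c)$ to leave the net rate $-2\eta_h(\sfrac{\lambda}{2}+\tau)\|\vecpi_h^i - \vecpi^*\|_2^2$ in the strongly monotone case. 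The monotone case ($\lambda=0$) is obtained by repeating the argument with Young's coupling $\tau\delta/2$ instead of $\lambda/8$, exactly mirroring the closing step of Lemma~\ref{lemma:full_error_recurrence}, thereby absorbing the $\|\vecpi_h^i - \vecpi^*\|_2^2$ penalties into the Tikhonov contraction. Collecting all contributions and taking expectations then yields the two stated recurrences; the main bookkeeping challenge is calibrating the Young's constants so that the $\varepsilon^2$, $N^{-1}$, and $e_h^i$ factors carry the correct $\lambda^{-1}$ (respectively $\tau^{-1}\delta^{-1}$) prefactors.
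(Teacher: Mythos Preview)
Your overall strategy is sound and closely parallels the paper's proof: the same $(a),(b),(c)$ decomposition, the same treatment of $(a)$ and $(c)$, and the same idea of absorbing Young's penalties into the contraction from $(c)$. The one substantive divergence is in how you handle the bias portion of $(b)$.

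You propose to insert $\Exop[\widehat{\vecr}_h^i\mid\mathcal{F}_h]$ and the mixed point $\vecF(\varepsilon K^{-1}\vecone+(1-\varepsilon)\bar{\vecmu}_h)$, then bound the resulting ``second piece'' via Cauchy--Schwarz combined with the first-moment estimate of Lemma~\ref{lemma:exploration_bias_trpa_bandit}. But that lemma's bound carries a $2L/\sqrt{N}$ term, so Cauchy--Schwarz against $\|\vecpi_h^i-\vecpi^*\|_2\le 2$ produces an $\mathcal{O}(\eta_h L N^{-1/2})$ contribution, not the $\mathcal{O}(\eta_h\lambda^{-1}L^2N^{-1})$ appearing in the stated recurrence. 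Your claim that this piece yields only ``$\exp(-\varepsilon T_h)$ and $N^{-1}$ contributions'' overlooks the dominant $N^{-1/2}$ part, and the final bookkeeping remark about calibrating Young's constants to put a $\lambda^{-1}$ prefactor on the $N^{-1}$ term cannot be carried out, because that term never passed through a Young's step in your scheme.

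The paper avoids this by a slightly different split: it conditions on the exploration event and sets $\widetilde{\vecr}_h^i:=\Exop[\widehat{\vecr}_h^i\mid E_h^i,\mathcal{F}_h]$. The piece $(\vecpi_h^i-\vecpi^*)^\top(\widehat{\vecr}_h^i-\widetilde{\vecr}_h^i)$ then vanishes on $E_h^i$ and is bounded crudely on $\overline{E_h^i}$, isolating the $\exp(-\varepsilon T_h)$ term cleanly without any $N$-dependence attached. The remaining piece $(\vecpi_h^i-\vecpi^*)^\top(\widetilde{\vecr}_h^i-\vecF(\bar{\vecmu}_h))$ is handled by Young's inequality (constant $\lambda/4$, not $\lambda/8$ as you wrote), so what must be bounded is the \emph{second moment} $\Exop[\|\widetilde{\vecr}_h^i-\vecF(\bar{\vecmu}_h)\|_2^2\mid\mathcal{F}_h]$; a direct coupling/variance computation gives this as $\mathcal{O}(L^2/N+L^2\varepsilon^2)$, which after multiplication by $\lambda^{-1}$ yields exactly the $\lambda^{-1}L^2N^{-1}$ and $\lambda^{-1}L^2\varepsilon^2$ terms of the lemma. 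In short: to match the stated bound you need a second-moment control of the bias (via Young's), not the first-moment control from Lemma~\ref{lemma:exploration_bias_trpa_bandit} paired with Cauchy--Schwarz.
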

Once again, the full proof has been postponed to Appendix~\ref{sec:proof_lemma_bandit_recurrence}.
The proof of Lemma~\ref{lemma:bandit_main_recurrence} follows a similar path as in the recurrence in the full feedback case (Lemma~\ref{lemma:full_error_recurrence}), with the exception that $\vecr_{h,t}^i$ has been replaced by the importance sampling estimator $\widehat{\vecr}^i_h$.
In the decomposition due to Inequality~\eqref{ineq:decomp_abc_full_recur}, the analysis of term (a) remains the same, whereas the terms (b), (c) must be further analyzed using Lemma~\ref{lemma:exploration_bias_trpa_bandit} to account for deviations between $\vecr_{h,t}^i$ and $\widehat{\vecr}^i_h$, as well as the $\varepsilon$ fraction of the population now exploring each round.

The recurrence in Lemma~\ref{lemma:bandit_main_recurrence} is similar in form to the full feedback case (Lemma~\ref{lemma:full_error_recurrence}), apart from the term $8 K^{\sfrac{3}{2}}\eta_h \sqrt{1+\sigma^2}  \exp\{-\varepsilon T_h\}$ due to the exploration scheme.
However, keeping the exploration epoch lengths $T_h$ logarithmically large can make this term small.
Furthermore, once again the recursion produces a dependence on expected mean policy deviations, $\Exop[e_h^i]$.
Hence, the expected policy deviation $\Exop[e_h^i]$ at epoch $h$ must be bounded once again at a rate $\mathcal{O}(\sfrac{1}{h})$ in order to obtain a non-vacuous upper bound on exploitability.
As in the full feedback case, we employ regularization to ensure $\Exop[e_h^i]$ is small.
The next lemma presents our upper bound.

\begin{lemma}[Policy deviation under TRPA-Bandit]\label{lemma:bandit_pol_deviation}
Under TRPA-Bandit dynamics, with learning rates $\eta_h:=\frac{\tau^{-1}}{h+2}$, arbitrary exploration rate $\varepsilon > 0$ and epoch lengths $T_h := \lceil \varepsilon^{-1} \log(h+2) \rceil$
it holds for any $i,j \in \{1, \ldots, N \}, i\neq j$ and each epoch $h\geq 0$ that
\begin{align*}
    \Exop[e_h^i] \leq \frac{24\tau^{-2} K^3 (\sigma^2 + 2) + 48\tau^{-2} + 24}{h+1} + \frac{16\tau^{-2} L ^ 2}{N^2}.
\end{align*}
\end{lemma}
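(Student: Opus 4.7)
The plan is to mirror the full-feedback analog (Lemma~\ref{lemma:policy_variations_bound_trpa_full}): first derive a recurrence for the pairwise policy differences $\Exop[\|\vecpi^i_h - \vecpi^j_h\|_2^2]$ across epochs for $i \neq j$, solve it via Lemma~\ref{lemma:general_recurrence}, and then convert to a bound on $\Exop[e_h^i]$ using the Jensen-style inequality $e_h^i \leq \frac{1}{N}\sum_{j}\|\vecpi^i_h - \vecpi^j_h\|_2^2$. The new complications relative to the full-feedback analysis are that the importance-sampling estimator $\widehat{\vecr}^i_h$ has variance of order $K^2$, its conditional mean does not equal the shared quantity $\vecF(\bar{\vecmu}_h)$ but only approximates it (as controlled by Lemma~\ref{lemma:exploration_bias_trpa_bandit}), and it vanishes entirely when agent $i$ never explores.

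Using non-expansiveness of $\Pi_{\Delta_\setA}$ and expanding the square yields
\begin{align*}
\|\vecpi^i_{h+1} - \vecpi^j_{h+1}\|_2^2 &\leq (1-\tau\eta_h)^2 \|\vecpi^i_h - \vecpi^j_h\|_2^2 + \eta_h^2 \|\widehat{\vecr}^i_h - \widehat{\vecr}^j_h\|_2^2 \\
&\qquad + 2(1-\tau\eta_h)\eta_h (\vecpi^i_h - \vecpi^j_h)^\top (\widehat{\vecr}^i_h - \widehat{\vecr}^j_h).
\end{align*}
Conditional on $\mathcal{F}_h$, I would control the second term by $\Exop[\|\widehat{\vecr}^i_h\|_2^2 \mid \mathcal{F}_h] \leq \mathcal{O}(K^2(1+\sigma^2))$ (the importance-sampling estimator has at most one nonzero coordinate of magnitude $K(1 + |\vecn^i|)$). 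The cross-term, unlike the full-feedback case, has nonzero conditional expectation and must be processed by Young's inequality: pick a weight that peels off a small $\mathcal{O}(\tau\eta_h)\|\vecpi^i_h - \vecpi^j_h\|_2^2$ slice, combine it with the $(1-\tau\eta_h)^2$ prefactor to preserve a contraction of the form $1 - 2/(h+2)$, and retain a residual of order $\eta_h\tau^{-1}\|\Exop[\widehat{\vecr}^i_h - \widehat{\vecr}^j_h \mid \mathcal{F}_h]\|_2^2$. With the choices $T_h = \lceil\varepsilon^{-1}\log(h+2)\rceil$ and $\eta_h = \tau^{-1}/(h+2)$, the exponential-in-$T_h$ piece of Lemma~\ref{lemma:exploration_bias_trpa_bandit} decays like $1/(h+2)$, so squaring gives an $\mathcal{O}(\tau^{-2}K^3(1+\sigma^2)/(h+2)^2)$ contribution (matching the $K^3$ in the final bound). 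Assembling the pieces yields a recurrence
\begin{align*}
\Exop[\|\vecpi^i_{h+1} - \vecpi^j_{h+1}\|_2^2] \leq \Big(1 - \frac{2}{h+2}\Big)\Exop[\|\vecpi^i_h - \vecpi^j_h\|_2^2] + \frac{C_1}{(h+2)^2} + \frac{C_2\tau^{-2}L^2}{N^2(h+2)},
\end{align*}
which upon invoking Lemma~\ref{lemma:general_recurrence} and averaging over $j$ via Jensen produces the claimed bound on $\Exop[e_h^i]$.

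The hard part is justifying the $1/N^2$ scaling on the persistent bias: a direct triangle-inequality application of Lemma~\ref{lemma:exploration_bias_trpa_bandit} only gives $\|\Exop[\widehat{\vecr}^i_h - \widehat{\vecr}^j_h \mid \mathcal{F}_h]\|_2 = \mathcal{O}(L/\sqrt{N})$, which after squaring would contribute $\Theta(L^2/N)$ rather than $\Theta(L^2/N^2)$ to the final bound. To obtain the tighter $\mathcal{O}(L/N)$ residual one must compare the two conditional expectations directly rather than through the shared reference point $\vecF(\varepsilon\vecone/K + (1-\varepsilon)\bar{\vecmu}_h)$: conditional on the environmental randomness in epoch $h$, the empirical distributions $\widehat{\vecmu}_{h,t}$ entering $\widehat{\vecr}^i_h$ and $\widehat{\vecr}^j_h$ differ only by the swap of a single exploring agent's action, a pointwise perturbation of order $1/N$. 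A pointwise use of the Lipschitz continuity of $\vecF$ on this coupled difference---rather than the marginal CLT-style fluctuation bound $\Exop[\|\widehat{\vecmu}_{h,t} - \bar{\vecmu}_h\|_2] = \mathcal{O}(1/\sqrt{N})$ employed inside Lemma~\ref{lemma:exploration_bias_trpa_bandit}---is what delivers the $\mathcal{O}(L/N)$ scaling on the expected difference of the two estimators, and hence, after squaring and passing through the recurrence, the advertised $\tau^{-2}L^2/N^2$ persistent term.
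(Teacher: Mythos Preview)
Your proposal is essentially correct and follows the same route as the paper: expand $\|\vecpi^i_{h+1}-\vecpi^j_{h+1}\|_2^2$ via non-expansiveness, control the cross-term by Young's inequality, and---crucially---bound $\|\Exop[\widehat{\vecr}^i_h\mid E_h^i,\mathcal{F}_h]-\Exop[\widehat{\vecr}^j_h\mid E_h^j,\mathcal{F}_h]\|_2$ by a \emph{direct} comparison (single-agent swap, hence $\mathcal{O}(L/N)$) rather than by triangulating through $\vecF(\varepsilon\vecone/K+(1-\varepsilon)\bar{\vecmu}_h)$, which is exactly the mechanism the paper uses (implicitly, via the coupling inside Lemma~\ref{lemma:exploration_bias_trpa_bandit}). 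Two small discrepancies with the paper's execution, neither fatal: (i) the Young split with weight $\tau\eta_h/2$ gives a contraction $1-\tfrac{3/2}{h+2}$, not $1-\tfrac{2}{h+2}$ as you wrote (Lemma~\ref{lemma:general_recurrence} still applies since $\gamma=3/2>1$); (ii) the paper strips off the no-exploration bias $\vecb_h^i$ \emph{before} applying Young, bounding it linearly via $\|\vecpi^i_h-\vecpi^j_h\|_2\leq 2$ rather than squaring it, though your route also works and only improves the rate of that piece.
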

The proof of Lemma~\ref{lemma:bandit_pol_deviation} follows similar ideas to Lemma~\ref{lemma:policy_variations_bound_trpa_full}, while accounting for (a) the increased variance due to importance sampling, and (b) potential further deviation between agent policies due to the $\mathcal{O}(\sfrac{1}{N})$ impact of exploration on $\widehat{\vecmu}_{h,t}$.
In particular, an additional source of policy deviation occurs when an agent does not explore in a given epoch, in which case the payoff estimator is uninformative ($\widehat{\vecr}_h^i = \veczero$) causing additional policy deviation.
The full proof has been postponed to Appendix~\ref{sec:proof_lemma_bandit_pol_dev}.

In the case of TRPA-Bandit, due to the additional variance of probabilistic exploration, the policy deviations between agents might be larger: compare the upper bounds of Lemma~\ref{lemma:bandit_pol_deviation} and Lemma~\ref{lemma:policy_variations_bound_trpa_full}.
In particular, the upper bound of Lemma~\ref{lemma:bandit_pol_deviation} contains a non-vanishing term unlike Lemma~\ref{lemma:policy_variations_bound_trpa_full}.
Nevertheless, they can still be controlled of order $\mathcal{O}(\sfrac{1}{(h+1)} + \sfrac{1}{N^2})$, where the additional $\mathcal{O}(\sfrac{1}{N^2})$ term compared to TRPA-Full vanishes very quickly when $N$ is large.

With these intermediate lemmas established, we state and prove the main convergence result for TRPA-Bandit  Theorem~\ref{theorem:bandit_short}, the main result of this work.
We provide asymptotic rates for brevity, although the proof of the theorem provides explicit bounds.

\begin{theorem}[Convergence, bandit feedback]\label{theorem:bandit_short}
Assume $\vecF$ is Lipschitz, monotone.
Assume $N$ agents run TRPA-Bandit (Algorithm~\ref{alg:bandit}) for $T$ time steps with regularization $\tau>0$ and exploration parameter $\varepsilon > 0$, and agents return policies $\{\vecpi^i\}_i$ after executing Algorithm~\ref{alg:bandit}.
Then, for any agent $i \in \setN$ that $\Exop\left[ \setE^i_{\text{exp}}( \{\vecpi^j\}_{j=1}^N ) \right] \leq \widetilde{\mathcal{O}} (\frac{\tau^{-2}\varepsilon^{-\sfrac{1}{2}}}{\sqrt{T}} + \tau^{-1}\varepsilon + \tau + \frac{\tau^{-1}}{\sqrt{N}} + \frac{\tau^{-\sfrac{3}{2}} }{N} )$.
If $\vecF$ is $\lambda$-strongly monotone, then $\Exop\left[ \setE^i_{\text{exp}}( \{\vecpi^j\}_{j=1}^N ) \right] \leq \widetilde{\mathcal{O}} (\frac{\tau^{-\sfrac{3}{2}} \lambda^{-\sfrac{1}{2}} \varepsilon^{-\sfrac{1}{2}}}{\sqrt{T}} + \tau^{-\sfrac{1}{2}} \lambda^{-\sfrac{1}{2}} \varepsilon + \tau + \frac{\tau^{-\sfrac{1}{2}} \lambda^{-\sfrac{1}{2}}}{\sqrt{N}} + \frac{\tau^{-1} \lambda^{-\sfrac{1}{2}} }{N} )$.
\end{theorem}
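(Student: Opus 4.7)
The plan is to mirror the full-feedback argument of Theorem~\ref{theorem:expert_short} with the bandit-specific ingredients supplied by Lemmas~\ref{lemma:exploration_bias_trpa_bandit}, \ref{lemma:bandit_main_recurrence}, and \ref{lemma:bandit_pol_deviation}. First, I would reduce the exploitability of the terminal tuple $\{\vecpi^j_H\}_j$ to the squared $\ell_2$ errors $u_H^i := \Exop[\|\vecpi_H^i - \vecpi^*\|_2^2]$ via the Lipschitz continuity of $\setE^i_{\text{exp}}$ (Lemma~\ref{lemma:phi_lipschitz}), Jensen's inequality $\Exop[\|\vecpi_H^i - \vecpi^*\|_2] \le \sqrt{u_H^i}$, and the regularization/mean-field bias from Theorem~\ref{theorem:mfgrvi_and_explotability}. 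This produces the decomposition $\Exop[\setE^i_{\text{exp}}] = \mathcal{O}\bigl( \tau + N^{-1/2} + N^{-1}\sum_j \sqrt{u_H^j}\bigr)$, reducing the theorem to a sharp bound on $u_H^i$.

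Next, I would substitute the policy-deviation bound of Lemma~\ref{lemma:bandit_pol_deviation}, $\Exop[e_h^i] = \mathcal{O}\bigl( \tau^{-2}(h+1)^{-1} + \tau^{-2} N^{-2} \bigr)$, into the main recurrence of Lemma~\ref{lemma:bandit_main_recurrence} at the prescribed $\eta_h = \tau^{-1}/(h+2)$ and $T_h = \lceil \varepsilon^{-1}\log(h+2)\rceil$. Under these choices each $\eta_h^2$ term, the product $\eta_h \Exop[e_h^i]$, and the exponential exploration-bias term $\eta_h \exp\{-\varepsilon T_h\}$ all collapse to order $(h+2)^{-2}$, while the remaining contributions linear in $\eta_h$ (the $\varepsilon^2$ exploration bias, the $N^{-1}$ finite-agent bias, and the $N^{-2}$ residual deviation) pick up a factor $\tau^{-1}/(h+2)$. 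The recurrence then takes the shape
\begin{equation*}
u_{h+1}^i \le \frac{C_{\text{it}}}{(h+2)^2} + \frac{C_{\text{b}}}{h+2} + \left(1 - \frac{c}{h+2}\right) u_h^i,
\end{equation*}
where $C_{\text{it}} = \mathcal{O}(\tau^{-2})$ and $C_{\text{b}} = \mathcal{O}(\tau^{-2}\varepsilon^2 + \tau^{-2}N^{-1} + \tau^{-3}N^{-2})$ in the monotone case (after picking, say, $\delta=\tfrac14$ to fix the shrinkage constant), with the analogous strongly monotone constants $C_{\text{it}} = \mathcal{O}(\tau^{-3}\lambda^{-1})$ and $C_{\text{b}} = \mathcal{O}(\tau^{-1}\lambda^{-1}\varepsilon^2 + \tau^{-1}\lambda^{-1}N^{-1} + \tau^{-2}\lambda^{-1}N^{-2})$. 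Invoking the generic recurrence lemma (Lemma~\ref{lemma:general_recurrence}) then yields $u_H^i = \mathcal{O}(C_{\text{it}}/H + C_{\text{b}})$ in both settings.

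Finally, to express the result in terms of total rounds $T$ instead of the epoch count $H$, I use $T = \sum_{h=0}^{H-1} T_h \le \varepsilon^{-1}(H+1)\log(H+2)$, which gives $H^{-1} \le \widetilde{\mathcal{O}}(\varepsilon^{-1} T^{-1})$ once logarithms are absorbed into $\widetilde{\mathcal{O}}$. Taking square roots of $u_H^i$ and substituting into the exploitability decomposition reproduces exactly the four-term bound stated in the theorem (and its strongly monotone analogue via the substitution $\tau^{-1}\leftrightarrow\tau^{-1/2}\lambda^{-1/2}$ on each term). The main obstacle will be the simultaneous bookkeeping of the several distinct error sources --- the iteration rate, the exploration bias controlled by $\varepsilon$ and $T_h$, the two finite-$N$ contributions of different orders ($N^{-1/2}$ from the mean-field approximation and $N^{-1}$ from the residual policy deviation), and the logarithmic translation from epochs to rounds --- across both the monotone case (needing a free parameter $\delta\in(0,1)$ to balance the contraction $1-2\tau\eta_h(1-\delta)$ against the $\delta^{-1}$ coefficient on $\Exop[e_h^i]$) and the strongly monotone case (where $\lambda^{-1}$ factors propagate explicitly through the Young-inequality constants in both $C_{\text{it}}$ and $C_{\text{b}}$).
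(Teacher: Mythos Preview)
Your plan is essentially identical to the paper's: substitute Lemma~\ref{lemma:bandit_pol_deviation} into Lemma~\ref{lemma:bandit_main_recurrence} with the prescribed $\eta_h,T_h$, apply Lemma~\ref{lemma:general_recurrence} (choosing $\delta=\tfrac14$ in the monotone case), then translate epochs $H$ to total rounds $T$ and bound exploitability via Lemma~\ref{lemma:phi_lipschitz} and Theorem~\ref{theorem:mfgrvi_and_explotability}. One bookkeeping slip: in the monotone case the coefficient on $\Exop[e_h^i]$ in Lemma~\ref{lemma:bandit_main_recurrence} is $\tau^{-1}\eta_h\delta^{-1}L^2=\mathcal{O}(\tau^{-2})/(h+2)$, and multiplying by the $\mathcal{O}(\tau^{-2})/(h+1)$ part of $\Exop[e_h^i]$ gives a contribution of order $\tau^{-4}/(h+2)^2$, so $C_{\text{it}}=\mathcal{O}(\tau^{-4})$ rather than $\mathcal{O}(\tau^{-2})$; this is exactly what is needed to recover the stated $\tau^{-2}\varepsilon^{-1/2}/\sqrt{T}$ after taking the square root (your $\mathcal{O}(\tau^{-2})$ would yield a strictly better $\tau^{-1}\varepsilon^{-1/2}/\sqrt{T}$, which is not what the theorem asserts). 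The same propagation affects the $N^{-2}$ contribution to $C_{\text{b}}$. Otherwise the structure and sequencing are correct.
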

 The proof follows from a straightforward combination of Lemmas~\ref{lemma:exploration_bias_trpa_bandit}, \ref{lemma:bandit_main_recurrence}, and \ref{lemma:bandit_pol_deviation} as in the full feedback case.
The exact bounds and proof are in the appendix, Section~\ref{sec:bandit_theorem_full}.

Once again, the values of $\tau$ and exploration probability $\varepsilon$ can be chosen to incorporate tolerable exploitability.
In the case where the number of participants $N$ in the game is known, the following corollary indicates the asymptotically optimal choices for the hyperparameters.

\begin{corollary}[Optimal $\varepsilon, \tau$, bandit feedback]\label{corollary:bandit}
Assume the conditions of Theorem~\ref{theorem:bandit_short} for $N$ agents running TRPA-Bandit.
For monotone $\vecF$, choosing $\tau = \sfrac{1}{\sqrt[4]{N}}$ and $\varepsilon = \sfrac{1}{\sqrt{N}}$ yields
$\Exop\left[\setE^i_{\text{exp}}(\{\vecpi^j\}_{j=1}^N) \right] \leq \widetilde{\mathcal{O}}(\frac{N^{\sfrac{3}{4}}}{\sqrt{T}} + \frac{1}{\sqrt[4]{N}})$ for any $i$.
For strongly monotone $\vecF$, choosing $\tau = \sfrac{1}{\sqrt[3]{N}}$ and $\varepsilon = \sfrac{1}{\sqrt{N}}$ yields $\Exop\left[\setE^i_{\text{exp}}(\{\vecpi^j\}_{j=1}^N) \right] \leq \widetilde{\mathcal{O}}(\frac{N^{\sfrac{3}{4}} \lambda^{-\sfrac{1}{2}}}{\sqrt{T}} + \frac{\lambda^{-\sfrac{1}{2}}}{\sqrt[3]{N}})$.
\end{corollary}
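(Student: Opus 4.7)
The plan is to substitute the claimed choices of $\tau$ and $\varepsilon$ directly into the upper bound from Theorem~\ref{theorem:bandit_short} and verify that the various terms collapse to the asserted rate. There is essentially no additional analysis beyond this substitution and a comparison of orders: the choices are precisely the (asymptotic) minimizers of the $N$-dependent bias over $(\tau,\varepsilon)$ subject to keeping $\varepsilon$ small enough so that the exploration bias $\tau^{-1}\varepsilon$ does not dominate.

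For the monotone case, I would first motivate the choice by balancing the bias contributions. The four bias terms in Theorem~\ref{theorem:bandit_short} are $\tau^{-1}\varepsilon,\;\tau,\;\tau^{-1}/\sqrt{N},\;\tau^{-3/2}/N$. Balancing the two "intrinsic" bias terms $\tau$ and $\tau^{-1}/\sqrt{N}$ through $\tau^{2}=1/\sqrt{N}$ yields $\tau=N^{-1/4}$. Then $\varepsilon=N^{-1/2}=\tau^{2}$ ensures $\tau^{-1}\varepsilon=\tau=N^{-1/4}$, while keeping $\varepsilon^{-1/2}$ in the $T$-dependent term as small as this constraint allows. Plugging in: $\tau^{-2}\varepsilon^{-1/2}/\sqrt{T}=N^{1/2}\cdot N^{1/4}/\sqrt{T}=N^{3/4}/\sqrt{T}$; the bias terms evaluate to $N^{-1/4},N^{-1/4},N^{-1/4}$, and $\tau^{-3/2}/N=N^{3/8}/N=N^{-5/8}$, which is of strictly smaller order. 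Summing gives $\widetilde{\mathcal{O}}(N^{3/4}/\sqrt{T}+N^{-1/4})$, matching the statement.

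For the $\lambda$-strongly monotone case, the relevant bias terms in Theorem~\ref{theorem:bandit_short} are $\tau^{-1/2}\lambda^{-1/2}\varepsilon,\;\tau,\;\tau^{-1/2}\lambda^{-1/2}/\sqrt{N},\;\tau^{-1}\lambda^{-1/2}/N$. The dominant balance is now between $\tau$ and $\tau^{-1/2}\lambda^{-1/2}/\sqrt{N}$, which forces $\tau^{3/2}\asymp \lambda^{-1/2}/\sqrt{N}$; absorbing $\lambda^{-1/2}$ into the $\widetilde{\mathcal{O}}$ bias constant (it reappears on the right anyway) gives $\tau=N^{-1/3}$. Choosing $\varepsilon=N^{-1/2}$ again keeps $\tau^{-1/2}\lambda^{-1/2}\varepsilon=\lambda^{-1/2}N^{1/6-1/2}=\lambda^{-1/2}N^{-1/3}$ at the same rate as $\tau$. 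Substituting yields the $T$-term $\tau^{-3/2}\lambda^{-1/2}\varepsilon^{-1/2}/\sqrt{T}=N^{1/2}\cdot N^{1/4}\lambda^{-1/2}/\sqrt{T}=N^{3/4}\lambda^{-1/2}/\sqrt{T}$, and all bias terms reduce to $\lambda^{-1/2}N^{-1/3}$ except $\tau^{-1}\lambda^{-1/2}/N=\lambda^{-1/2}N^{-2/3}$, which is strictly smaller.

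I anticipate no real obstacle: the corollary is a direct algebraic verification once Theorem~\ref{theorem:bandit_short} is available. The only subtlety worth double-checking is that the $\tau^{-3/2}/N$ (resp.\ $\tau^{-1}\lambda^{-1/2}/N$) term, which arose from the $N$-agent approximation and scales differently from the other biases, is indeed asymptotically dominated under the stated choices; as the computations above show, it is in both cases of strictly smaller order than the optimal bias, and therefore does not appear in the final rate.
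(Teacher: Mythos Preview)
Your proposal is correct and matches the paper's approach: the corollary is stated without a separate proof in the paper, being an immediate consequence of substituting the indicated $\tau,\varepsilon$ into the bound of Theorem~\ref{theorem:bandit_short}. Your term-by-term verification (including the observation that the $\tau^{-3/2}/N$ and $\tau^{-1}\lambda^{-1/2}/N$ contributions are of strictly smaller order and can be dropped) is exactly what is needed.
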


The dependence of $N$ of the sample complexity in the bandit case is worse compared to the full feedback setting as expected: intuitively the agents must take turns to estimate the payoffs of each action in bandit feedback.
Furthermore, while our problem framework is different and a direct comparison is difficult in terms of bounds, we point out that classical MMAB results such as \citep{lugosi2022multiplayer} have a linear dependence on $N$, while in our case the dependence on $N$ scales with $N^{\sfrac{3}{4}}$.
We emphasize that the time-dependence is sublinear in terms of $N$, up to the non-vanishing finite population bias.
As in the full feedback case, the non-vanishing finite population bias in the bandit feedback case scales with $\mathcal{O}(\sfrac{1}{\sqrt[4]{N}})$ or $\mathcal{O}(\sfrac{1}{\sqrt[3]{N}})$, rather than $\mathcal{O}(\sfrac{1}{\sqrt{N}})$ which would match Theorem~\ref{theorem:mfg_ne}.
Note that the dependence of the bias on $N$ varies in various mean-field game results \citep{saldi2019approximate}, but asymptotically is known to converge to zero as $N\rightarrow\infty$, as our explicit finite-agent bound also demonstrates.

Finally, we note that as expected the algorithm for bandit feedback has a worse dependency on the number of actions.
This is as expected due to the fact that (i) the importance sampling estimator increases variance on payoff estimators by a factor of $K$, and (2) in other words, a factor of $\mathcal{O}(K)$ is introduced in order to explore all actions.

\section{Experimental Results}\label{sec:experiments_main}

We validate the theoretical results of our work on numerical and two real-world experiments.
The details of the setup are presented in greater detail in the appendix (Section~\ref{sec:experiments_detailed}) and the code is provided in supplementary materials.
Our experiments assume bandit feedback, although in the appendix we also include results under the full feedback model.
We provide an overview of our setup first.

\textbf{Numerical problems.}
Firstly, we formulate three numerical problems, which are based on examples suggested in Sections~\ref{sec:assumptions_examples}.
We randomly generate monotone payoff operators that are linear (\textsc{Linear}) and a payoff function that admits a KL divergence potential (\textsc{kl}).
We also analyze a particular ``beach bar process'' (\textsc{bb}), a stateless version of the example presented in \citep{perrin2020fictitious}.
For numerical examples, we use $K=5$, and vary between various population sizes in $N=\{20,50,100, 200, 500, 1000\}$ to quantify the effect of finite $N$ and compare with theoretical bias bounds.
We set the parameters $\varepsilon, \tau$ using the theoretical values from Corollary~\ref{corollary:bandit}.

\textbf{Traffic flow simulation.}
Using the UTD19 dataset \citep{loder2019understanding}, we evaluate our algorithms on traffic congestion data on three different routes for accessing the city center of Zurich (\textsc{utd}).
The UTD19 dataset features many closed loop sensors across various urban roads in Zurich, providing granular measurements of road occupancy and traffic flow.
We use these real-world stationary detector measurements to approximate traveling times as a function of route occupancy with a kernelized regression model on three routes.
We then evaluate our algorithms on estimated traveling times.
The real-world data from the UTD19 dataset suggests the travel duration (and therefore the payoffs) are in fact non-increasing in the occupancy of each segment, and hence can be reasonably modeled by a monotone payoff.
Further empirical evaluation of monotonicity is discussed in Appendix~\ref{appendix:exp:traffic}.

\textbf{Access to the Tor network.}
We also run experiments on accessing the Tor network, which is a large decentralized network for secure communications and an active area of research in computer security \citep{jansen2014sniper,mccoy2008shining}.
The Tor network consists of many decentralized servers and access to the network is achieved by communicating with one of many entry guard servers (a list advertised publicly \citep{torentryguards}, some hosted by universities).
As the entry guards serve a wide public of users and each user is free to choose an entry point, the network is an ideal setting to test our algorithms.
We simulate 100 independent agents accessing Tor by choosing among $K=5$ entry servers every minute and use the real-world ping delays for bandit feedback.
Our empirical evaluations show that monotonicity is also a reasonable assumption for the Tor network experiment due to the fact that response times tend to be non-decreasing in occupation: we provide an empirical argument in Appendix~\ref{appendix:exp:tor}.

Overall, our experiments in both models and real-world use cases support our theory.
Firstly, our experiments verify learning in the sense of decreasing maximum exploitability $\max_{i\in\setN} \setE^i_{\text{exp}}(\{\vecpi^j\}_{j=1}^N)$ and mean distance to MF-NE given by $\frac{1}{N}\sum_i \|\vecpi^* - \vecpi^i_t\|_2$ during IL (Figure\ref{figure:main}-b,c).
As expected from Corollary~\ref{corollary:bandit}, the maximum exploitability oscillates but remains bounded (due to the effect of finite $N$).
Furthermore, the agents converge to policies closer to the MF-NE as $N$ increases (Figure\ref{figure:main}-c), empirically demonstrating that the MF-NE is an accurate description of the limiting behaviour of SMFG as $N\rightarrow\infty$.
The scale of the excess exploitability in our experiments decreases rapidly in $N$, allowing our results to be significant even for hundreds or thousands of agents.

\begin{figure}[h]
\begin{tabular}{ccc}
  \includegraphics[width=0.3\linewidth]{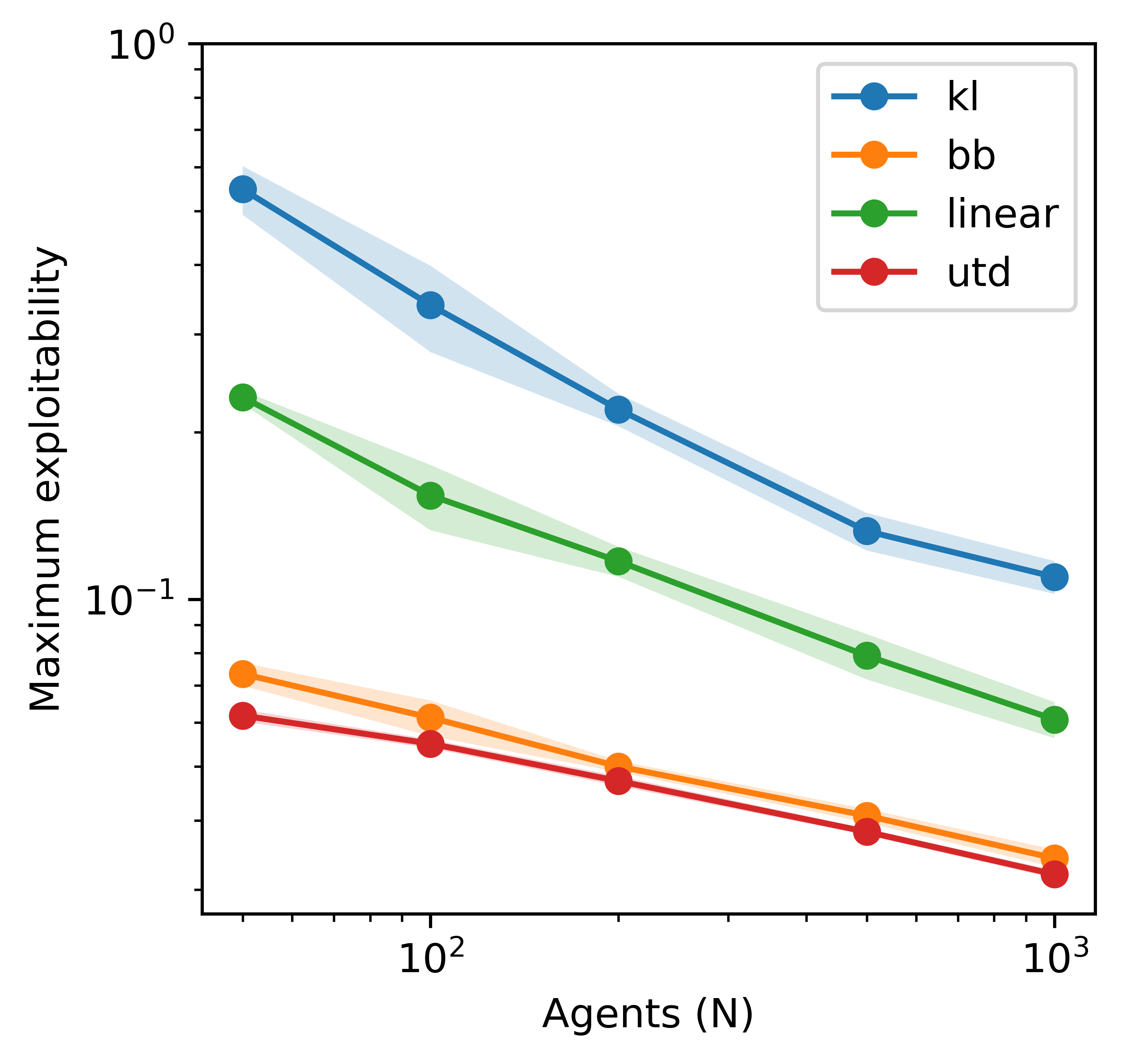} & 
  \includegraphics[width=0.3\linewidth]{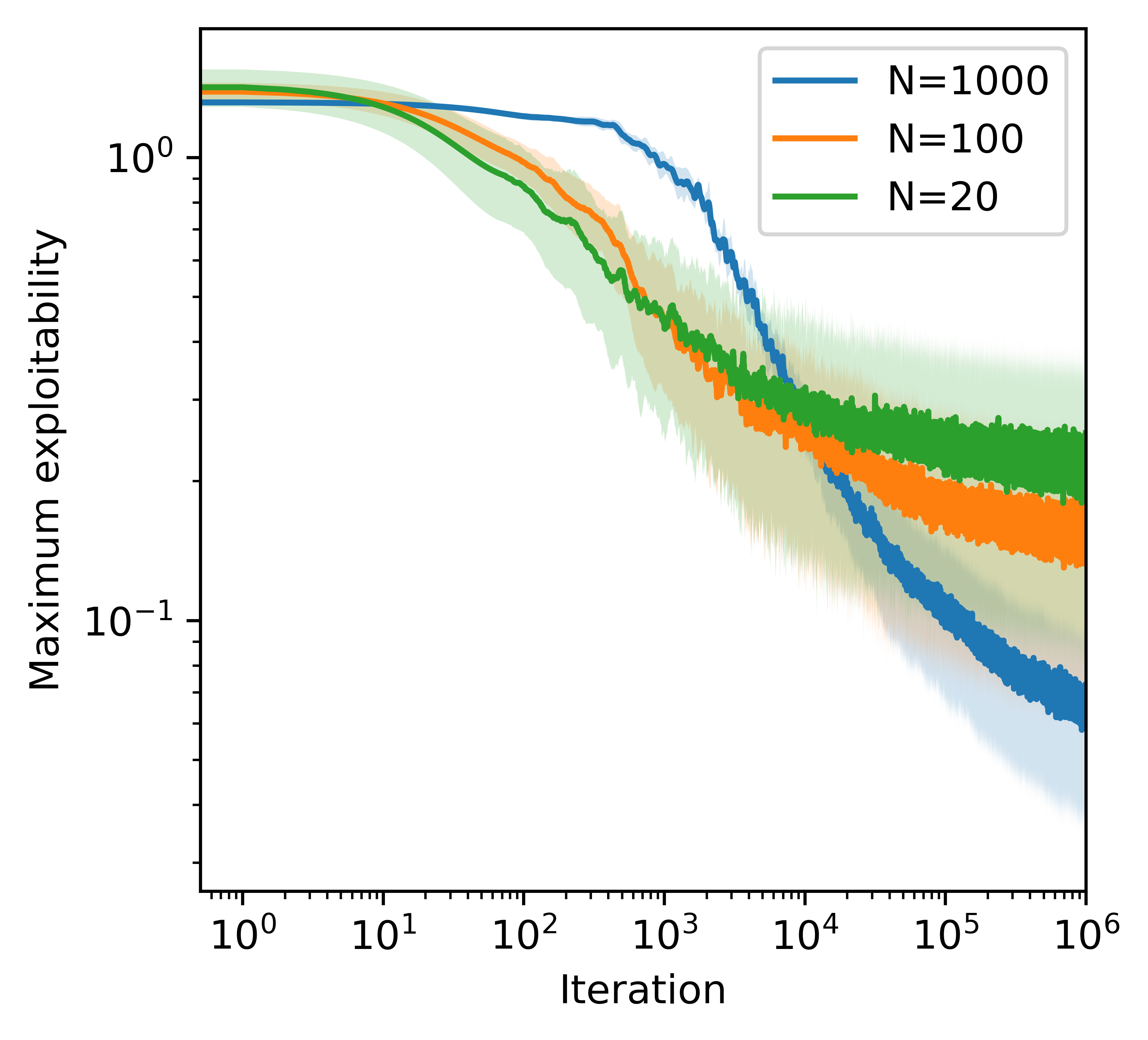} &   
  \includegraphics[width=0.3\linewidth]{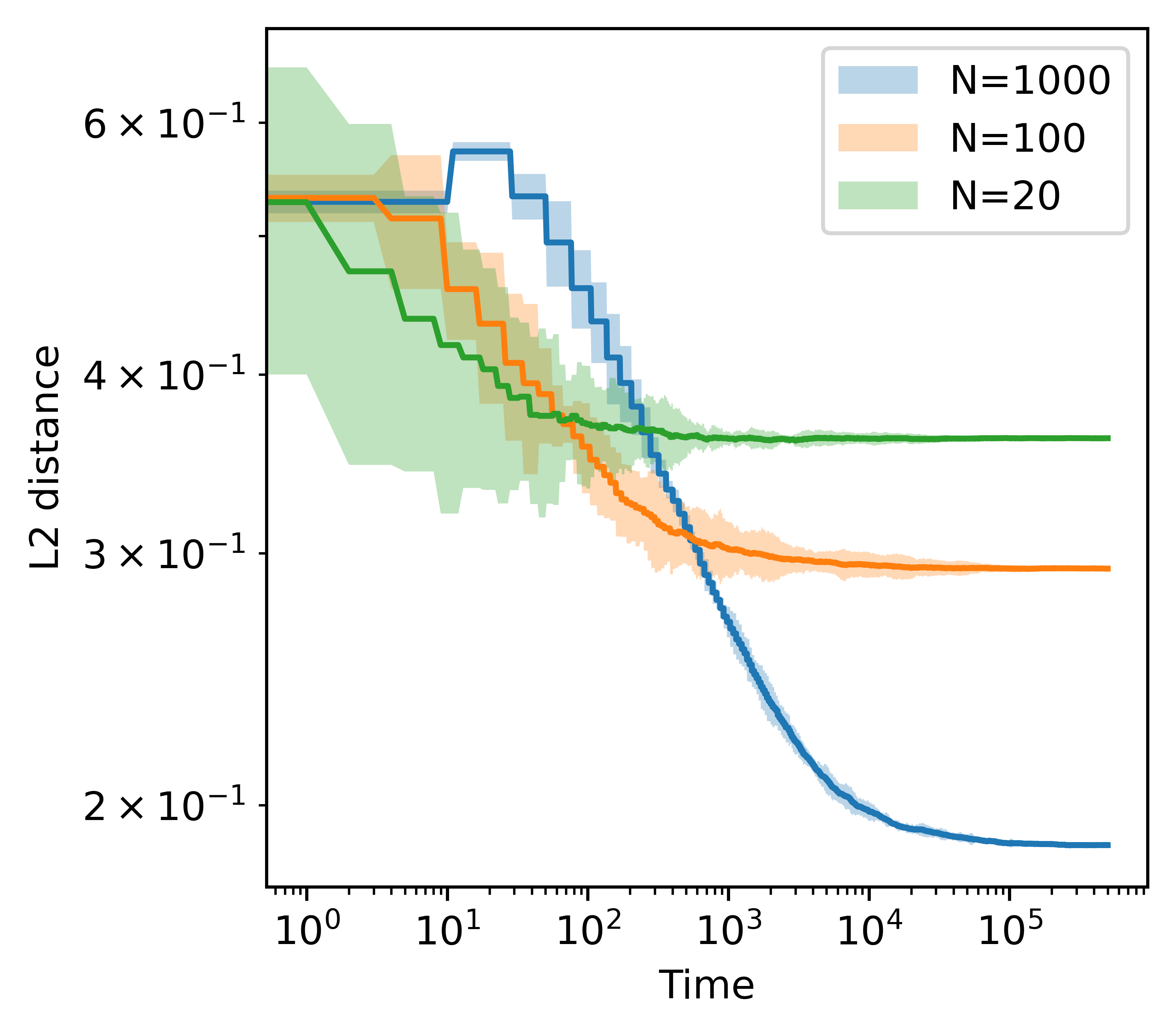} \\
(a) & (b) & (c)
\end{tabular}
\caption{
Results for numerical problems \textsc{kl}, \textsc{bb}, \textsc{linear}, \textsc{utd}.
(a) Maximum exploitability of $N$ agents at convergence as a function of $N$ for different problems,
(b) The max. exploitability among $N$ agents during training with linear payoff (\textsc{linear}), for different $N$,
(c) The mean $\ell_2$ distance of agent policies during training to the MF-NE in the Zurich traffic flow simulation problem (\textsc{utd}).
}
\label{figure:main}
\end{figure}

\begin{figure}[h]
\begin{tabular}{ccc}
 \includegraphics[width=0.3\linewidth]{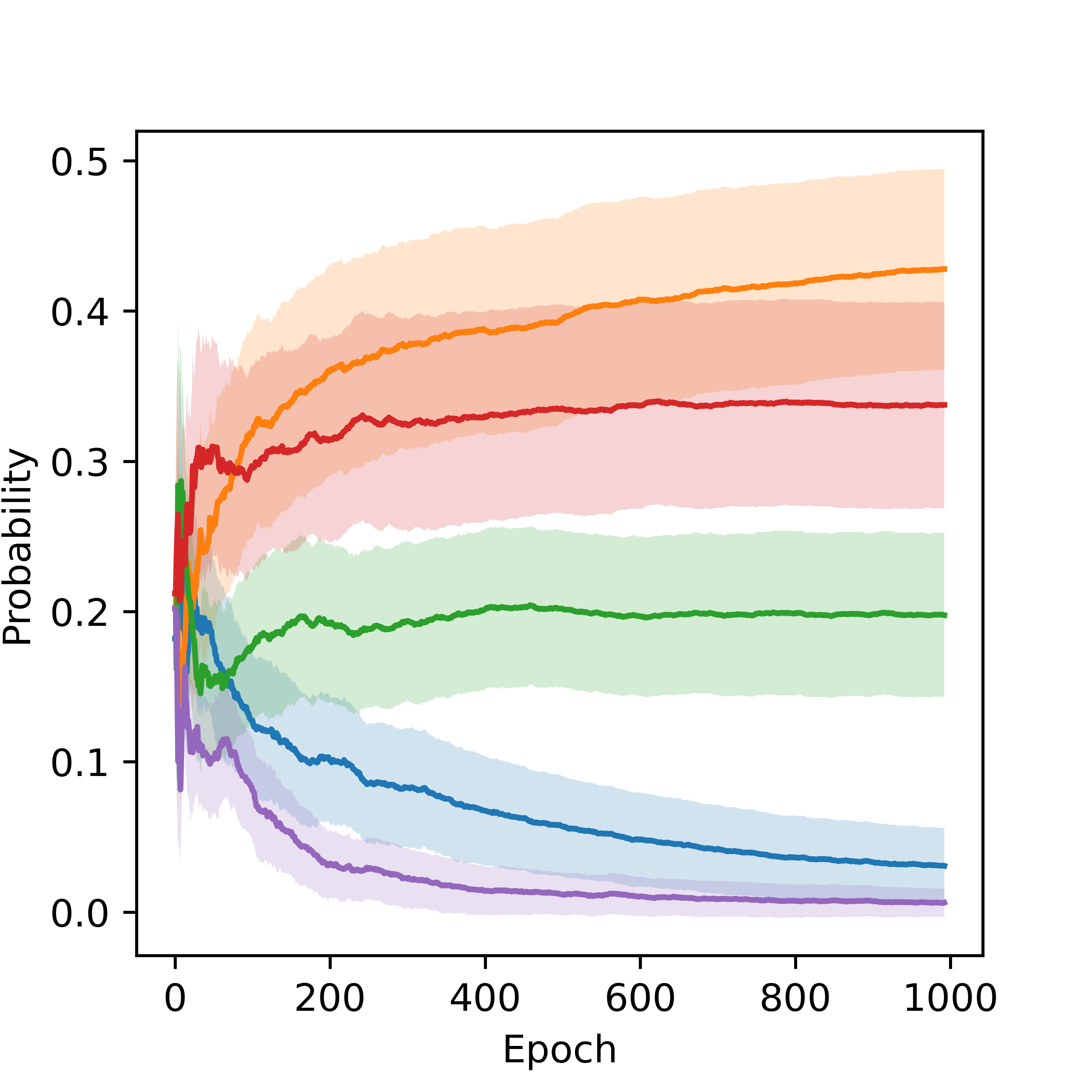} &   \includegraphics[width=0.3\linewidth]{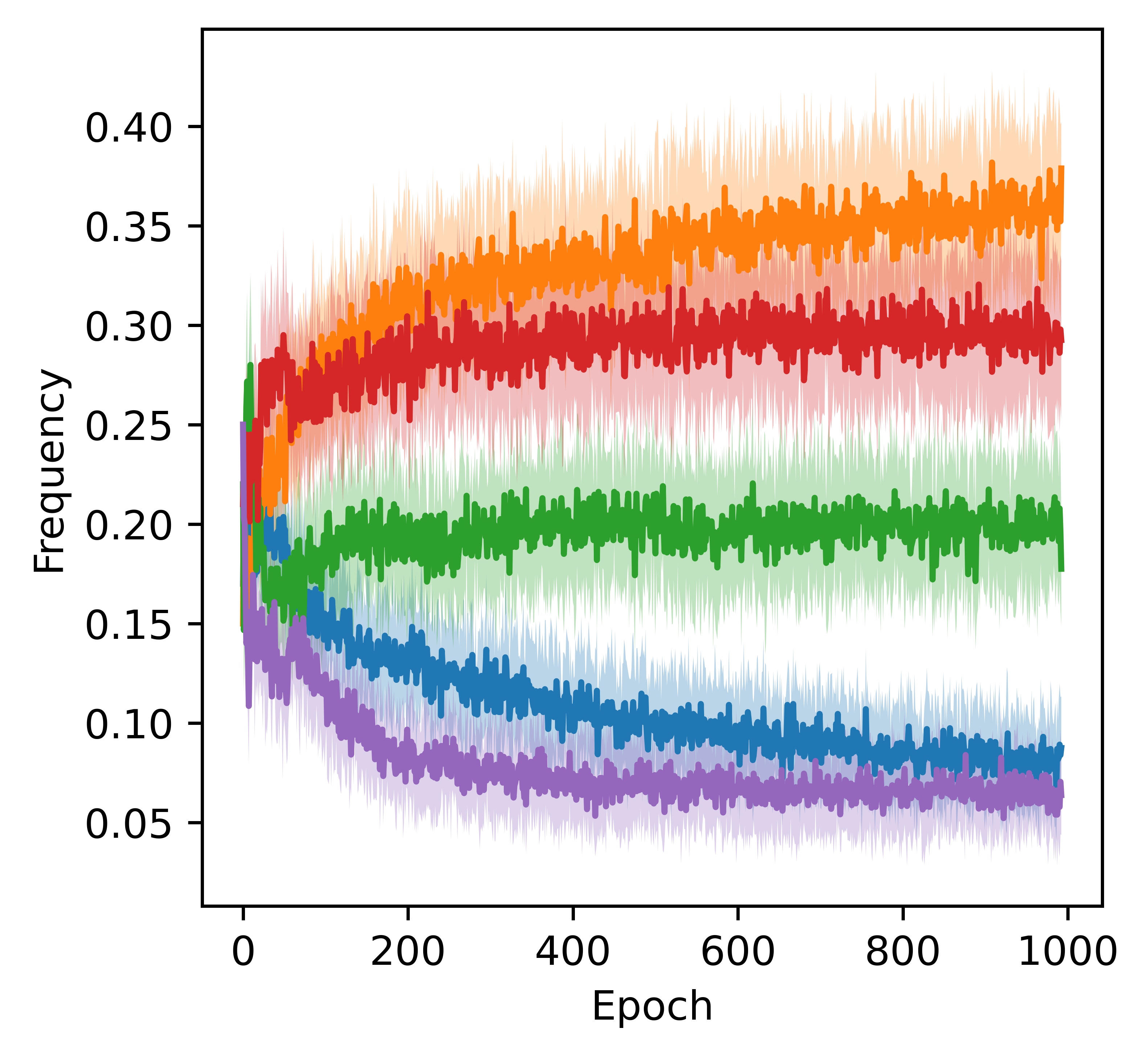} & \includegraphics[width=0.3\linewidth]{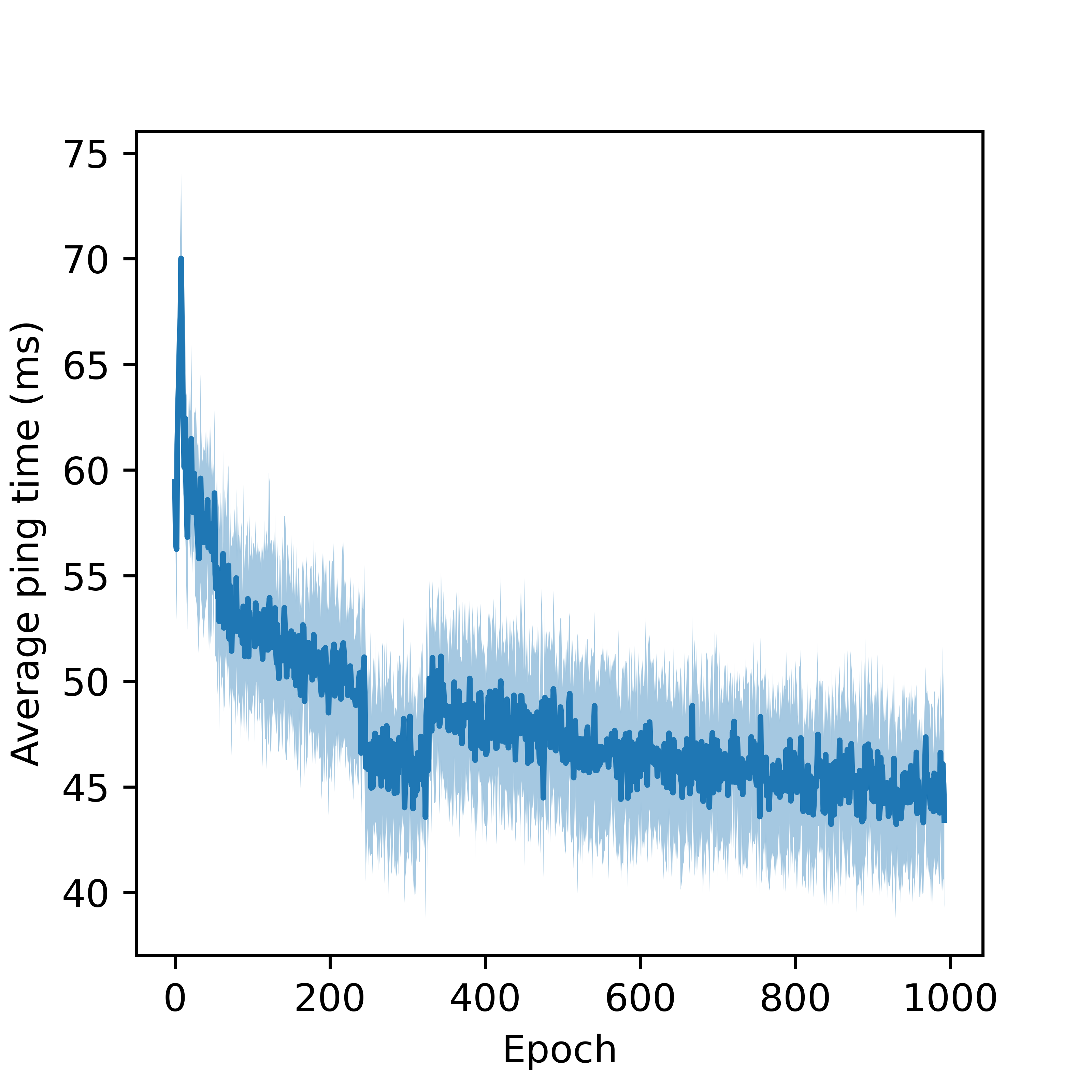}\\
(a) & (b) & (c)
\end{tabular}
\caption{
Results for the Tor network experiment.
(a) Average policies (probability distribution) over 5 servers of the 100 agents in the Tor network access experiment,
(b) Empirical distribution of agents over Tor entry servers during training on 5 servers (different colors indicate different entry servers),
(c) Average waiting times for Tor network access during training.
}
\label{figure:tor_exp}
\end{figure}

Our experimental results in traffic congestion and server access also support our theoretical claims.
Despite having no \emph{a priori} assumption of monotonicity, our methods efficiently converge.
We note that in the case of the network access experiments, we refrain from simulating thousands of agents to minimize network impact.
This implies that the delays have a high dependence on external factors as the behavior of other users will be dominant.
Nevertheless, our experiment indicates that our algorithm can produce competitive results in the wild (Figure~\ref{figure:tor_exp}).

Finally, we note that while there are no algorithms that have theoretical guarantees in the $N$ agent IL setting, we compare our results with a heuristic extension of online mirror descent \citep{perolat2022scaling} in the numerical problems, see Appendix~\ref{appendix:comparison_omd}.
Our results suggest a naive extension of OMD to our setting will lead to divergence or cycling.

\section{Discussion and Conclusion}

In this work, we proposed a finite-agent, independent learning algorithm for learning SMFGs under bandit feedback, and established finite sample guarantees. Our results hinge on the variational inequality perspective of the mean-field game limit,  demonstrating that efficient independent learning is achievable. 

As the first study on decentralized learning on SMFGs, our results are based on the standard (strong) monotonicity assumption on VIs.
However, research on VIs has identified several more general yet still tractable conditions that might be more applicable to real-world SMFGs, such as  the weak Minty condition \citep{diakonikolas2021efficient}, the variationally stable condition \citep{mertikopoulos2019learning}, pseudo-monotone operators \citep{karamardian1976complementarity}, and generalized monotonicity \citep{kotsalis2022simple}.
More abstractly, an intriguing open question is whether the existence of efficient algorithms for solving the (possibly non-monotone) MF-VI always implies the existence of decentralized algorithms for learning SMFG.
At present, several challenges remain when generalizing our results beyond monotonicity: firstly, our current method requires the expected policy deviation to converge to zero, restricting the algorithmic tools that can be deployed.
Moreover, our proof strategy relies on the convergence of iterates $\vecpi_t^i$ to the unique regularized solution $\vecpi^*$ due to the contractivity of the regularized ascent operator $\Gamma^{\eta,\tau}$. Yet for more general VIs, obtaining such a contractive operator in the $\ell_2$-norm might be difficult.

From a game-theoretic and learning perspective, another relevant question is whether our VI approach to decentralized learning can be extended to Markovian MFGs. 
The current theory does not immediately suggest our analysis could be extended to Markovian games.
Specifically, it is not clear if the Markovian Nash equilibrium would still be characterized compactly as a VI,  or when the corresponding operator would be monotone.
The question of IL in stationary MFGs has been tackled by \cite{yardim2023policy} but with poor sample complexity; future work could address this problem using more general VIs.
Finally, while our results focus on expected exploitability, they could potentially be generalized to guarantee convergence to low exploitability policies with high probability.

\section*{Acknowledgements}

We thank the editor and reviewers for their detailed comments and insightful feedback. This project was supported by Swiss National Science Foundation (SNSF) under the framework of NCCR Automation and SNSF Starting Grant.
S. Cayci's research was funded under the Excellence Strategy of the Federal Government and the L{\"a}nder.

\bibliography{sn-bibliography}

\appendix

\section{Table for Notation}
\begin{longtable}{ p{.23\textwidth}  p{.77\textwidth} } 
\underline{General notation:} & \\
$\Delta_\setX$ & probability simplex on discrete set $\setX$ \\ 
$[M]$ & $:= \{1, \ldots, M\}$, for any $M\in\mathbb{N}_{>0}$ \\
$\Delta_{\setX, N}$ & $:= \{ \vecu = \{u_i\}_i \in\Delta_\setX | N u_i \in \mathbb{N}_{\geq 0} \}$, for discrete set $\setX$, $N\in\mathbb{N}_{>0}$ \\
$\mathbb{M}^{D_1,D_2}$ & $D_1 \times D_2$ matrices \\
$\mathbb{S}_{++}^D$ & positive definite $D \times D$ matrices \\
$\Pi_K$ & projection onto convex, compact set $K\subset \mathbb{R}^H$\\
$\vece_a$ & $\in \mathbb{R}^\setA$, standard unit vector with coordinate $a$ set to 1 \\
 &  \\ 
\underline{For SMFGs:} & \\
$\vecF$ & payoff function, $\vecF: \Delta_\setA \rightarrow [0,1]^K$  \\
$\lambda$ & strong monotonicity modulus of $\vecF$ \\  
$L$ & Lipschitz modulus of $\vecF$ \\
$\tau$ & Tikhonov regularization parameter \\ 
$\vecpi^*$ & unique solution of $\tau$ regularized \eqref{eq:mfg_rvi_statement} \\
$K$ & number of actions \\ 
$N$ & number of players \\  
$\setN$ & $:= \{1, \ldots, N \}$, the set of players \\ 
$\setA$ & the set of actions, $|\setA| = K$. \\ 
$\sigma^2$ & upper bound of the standard deviation of received payoff \\
$\widehat{\vecmu}$ & $:= \frac{1}{N} \sum_{i=1}^N \vece_{a^i}$, when actions $\{a^i\}_{i=1}^N$ clear in context \\
$V^i(\vecpi^1, \ldots, \vecpi^N)$ & expected payoff of player $i$ under strategy profile $(\vecpi^1, \ldots, \vecpi^N)$ \\
$\setE^i_{\text{exp}}(\{\vecpi^j\}_{j=1}^N)$ & $:= \max_{\vecpi'\in \Delta_\setA} V^i(\vecpi', \vecpi^{-i}) - V^i(\vecpi^1, \ldots, \vecpi^N)$, exploitability \\
$\widehat{\vecmu}(\{a^i\}_{i=1}^N)$ & $:= \frac{1}{N} \sum_{i=1}^N \vece_{a^i}$, the action distribution induced by particular $\{a^i\}_{i=1}^N \in \setA^N$ \\
 &  \\ 
\underline{For full feedback:} & \\
$t$ & round of play \\
$a^i_t$ & $\in\setA$ action take by player $i$ at round $t$ \\
$\widehat{\vecmu}_t $ & $:= \frac{1}{N} \sum_{i=1}^N \vece_{a_t^i}$, empirical distribution over actions $\setA$ on round $t$ \\ 
$\vecr^i_t$ & $:= \vecF(\widehat{\vecmu}_t) + \vecn_t^i$, payoff vector observed by player $i$ \\ 
$\eta_t$ & learning rate \\
$ \bar{\vecmu}_t $ & $ := \frac{1}{N} \sum_{i=1}^N \vecpi_t^i,$  mean policy at round $t$\\ 
$e_t^i$ & $:= \|\vecpi^i_t - \bar{\vecmu}_t \|_2^2 $, deviation of policy of player $i$ \\
$u_t^i $ & $:= \Exop\left[\| \vecpi_t^i - \vecpi^* \|_2^2\right]$ \\
 &  \\ 
\underline{For bandit feedback:} & \\
$h$ & epoch of play \\
$t$ & round of play in epoch \\
$a^i_{h,t}$ & $\in\setA$ action take by player $i$ at round $t$ of epoch $h$ \\
$T_h$ & number of rounds in epoch $h$ \\
$\varepsilon$ & exploration probability \\
$\widehat{\vecmu}_{h,t} $ & $:= \frac{1}{N} \sum_{i=1}^N \vece_{a_{h, t}^i}$, empirical distribution over actions $\setA$ on round $t$ of epoch $h$ \\ 
$\vecr^i_{h,t}$ & $:= \vecF(\widehat{\vecmu}_{h,t}) + \vecn_{h,t}^i$, (unobserved) payoff vector by player $i$ \\ 
$r^i_{h,t}$ & $:= \vecr^i_{h,t} (a^i_{h,t})$, payoff observed by player $i$ at round $t$ of epoch $h$ \\
$X_{h,t}^i$ & $\sim \operatorname{Ber}(\varepsilon)$ indicator variable, $1$ if $i$ explores at round $t$ of epoch $h$ \\
$\widehat{\vecr}^i_h $ & $:= K r^i_{h,t}\vece_{a^i_{h,t}}$ if $X_{h,t}^i=1$, importance sampling estimate of player $i$ \\
$\eta_h$ & learning rate \\
$ \bar{\vecmu}_h $ & $ := \frac{1}{N} \sum_{i=1}^N \vecpi_h^i,$  mean policy at epoch $h$\\ 
$e_h^i$ & $:= \|\vecpi^i_h - \bar{\vecmu}_h \|_2^2 $, deviation of policy of player $i$ \\
$u_h^i $ & $:= \Exop\left[\| \vecpi_h^i - \vecpi^* \|_2^2\right]$ \\
\end{longtable}

\section{A Detailed Comparison to the Setting in \cite{gummadi2013mean}}\label{sec:detailed_comparison}

Since specific keywords seem to correspond to the works on mean-field approximations with bandits, we provide a greater discussion of our setting and the results by \citet{gummadi2013mean}.
In general, our settings and models are very different, hence almost none of the results between our work and Gummadi et al. are transferable to the other.
Our problem formulation, analysis, and results are fundamentally different from their setting due to the following points.

\textbf{Stationary equilibrium vs Nash equilibrium.}
The most critical difference between the two works is the solution concepts.
Our setting is competitive, as a natural extension, the solution concept is that of a Nash equilibrium where each agent has no incentive to change their policy.
On the other hand, the setting of Gummadi et al. need not be competitive or collaborative and this distinction is not significant for their framework, their goal is to characterize convergence of the population to a stationary distribution.
Their main results show that if a particular policy map $\sigma: \mathbb{Z}_{\geq 0}^{2 n} \rightarrow \Delta_\setA$ is prescribed on agents, the population distribution will converge to a steady state.
The equilibrium concept of \cite{gummadi2013mean} is not \emph{Nash}, rather stationarity.

\textbf{Optimality considerations.}
As a consequence of analyzing stationarity, the results by \citep{gummadi2013mean} do not analyze or aim to characterize optimality.
In their analysis, a fixed map $\sigma: \mathbb{Z}_{\geq 0}^{2 n} \rightarrow \Delta_\setA$ is assumed to be the policy/strategy of a continuum of (i.e., infinitely many) agents, which maps observed loss/win counts (from Bernoulli distributed arm rewards) to arm probabilities.
The stationary distribution in general obtained from $\sigma$ in \cite{gummadi2013mean} does not have optimality properties, for instance, a fixed agent will can have arbitrary large exploitability.
The main goal of \cite{gummadi2013mean} is to prove the convergence of the population distribution to a steady state behaviour.

\textbf{Algorithms.}
As a consequence of the previous points, Gummadi et al. abstract away any algorithmic considerations to the fixed map $\sigma$ and the particular algorithms employed by agents do not directly have significance in terms of their theoretical conclusions.
Since we analyze optimality in our setting, we require a specific algorithm to be employed (TRPA and Algorithm~\ref{alg:bandit}).

\textbf{Independent learning.}
In our setting, the notion of learning and independent learning become significant since we are aiming to obtain an approximate NE.
Hence, our theoretical results bound the expected exploitability (Theorems~\ref{theorem:expert_short}, \ref{theorem:bandit_short}) in terms of number of samples.
In the work of \cite{gummadi2013mean}, the main aim is convergence to a steady state rather than learning.

\textbf{Population regeneration.}
Finally, to be able to obtain a contractive mapping yielding a population stationary distribution/steady state, \cite{gummadi2013mean} assume that the population regenerates at a constant rate $\beta$, implying agents are constantly being replaced by oblivious agents that have not observed the game dynamics.
This smooths the dynamics by introducing a forgetting mechanism to game participants.
Our results on the other hand are closer to the traditional bandits/independent learning setting.
For instance, this would correspond to non-vanishing exploitability scaling with $\mathcal{O}(\beta)$ in our system as agents constantly ``forget'' what they learned.

\textbf{Other model differences.}
In our setting, we assume general noisy rewards while in \cite{gummadi2013mean}, the rewards are Bernoulli random variables with success probability dependent on the population.

\section{Formalizing Learning Algorithms}\label{section:alg_formalization}

In this section, we formalize the concept of an independent learning algorithm in the full feedback and bandit feedback setting.
In general, we formalize the notion of an algorithm as a map $A_t^i: \mathcal{H}_{t}^i \rightarrow \Delta_\setA$ that maps the set of past observations of agent $i$ at time $t$ to action selection probabilities.
The definition of the set $\mathcal{H}_{t}^i$ varies between the feedback models.

\begin{definition}[Learning algorithm with full feedback]
\label{definition:alg_expert}
An independent learning algorithm with full information $\mathbf{A} = \{ A_t^i\}_{i,t}$ is a sequence of mappings for each player with
\begin{align*}
    &A_t^i : \Delta_\setA^{t-1} \times \setA^{t-1} \times [0,1]^{(t-1) \times K} \rightarrow \Delta_\setA, \text{ for all $t > 0$}, \\
    &A_0^i \in \Delta_\setA,
\end{align*}
that maps past $t-1$ observations from previous rounds to a mixed strategy on actions $\setA$ at time $t > 0$ for each agent $i$.
\end{definition}

Naturally, we are interested in algorithms that yield a good approximation of the NE in expectation.
More explicitly, we will be interested in designing algorithms that converge to policy profiles with low expected explotability.

\begin{definition}[Rational learning algorithm with full feedback]
Let $\mathbf{A}$ be an algorithm with full feedback as defined in Definition~\ref{definition:alg_expert}.
We call $\mathbf{A}$ $\delta$-rational if it holds that for all $i$, the induced mixed strategies $\vecpi_t^i$ under $\vecpi_0^i = A_0^i, \vecpi_t^i = A_t^i(\vecpi^i_0, \ldots, \vecpi_{t-1}^i, a_0^i, \ldots, a_{t-1}^i, \vecr_{0}^i, \ldots, \vecr_{t-1}^i)$ satisfy
\begin{align*}
    \lim_{t\rightarrow \infty} \Exop[ \setE^i_{\text{exp}}(\{\vecpi_t^j \}_{j=1}^N)] \leq \delta, \text{ for all } i \in \setN.
\end{align*}
\end{definition}

Note that while not specified in the definition above, we will also be interested in the rate of convergence of the exploitability term for a rational algorithm.
Since we are solving the SMFG at the finite-agent regime, we will be interested in $\delta$-rational algorithms that have $\delta \rightarrow 0$ as $N\rightarrow\infty$, that is, the non-vanishing bias should scale inversely with the number of agents.

Finally, we also formalize the concepts of a learning algorithm and $\delta$-rationality in the bandit setting.

\begin{definition}[Algorithm with bandit feedback]
\label{definition:alg_bandit}
An algorithm with bandit feedback $\mathbf{A} = \{ A_t^i\}_{i,t}$ is a sequence of mappings for each player with
\begin{align*}
    &A_t^i : \Delta_\setA^{t-1} \times \setA^{t-1} \times [0,1]^{(t-1) } \rightarrow \Delta_\setA, \text{ for all $t > 0$}, \\
    &A_0^i \in \Delta_\setA,
\end{align*}
that maps past $t-1$ observations from previous rounds at all times $t > 0$ (only including the payoffs of the \emph{played} actions) to a probability distribution on actions $\setA$.
\end{definition}

\begin{definition}[Rational algorithm with bandit feedback]
Let $\mathbf{A}$ be an algorithm with bandit feedback as defined in Definition~\ref{definition:alg_bandit}.
We call $\mathbf{A}$ $\delta$-rational if it holds that for all $i$, the induced (random) mixed strategies $\vecpi_t^i$ under $\vecpi_0^i = A_0^i, \vecpi_t^i = A_t^i(\vecpi^i_0, \ldots, \vecpi_{t-1}^i, a_0^i, \ldots, a_{t-1}^i, r_{0}^i, \ldots, r_{t-1}^i)$ satisfy
\begin{align*}
    \lim_{t\rightarrow \infty} \Exop[ \setE^i_{\text{exp}}(\{\vecpi_t^j \}_{j=1}^N)] \leq \delta, \text{ for all } i \in \setN.
\end{align*}
\end{definition}

\section{Basic Inequalities}\label{app:basic_inequalities}

In our proofs, we will need to repeatedly bound certain recurrences and sums.
In this section, we present useful inequalities to this end.

\begin{lemma}[Harmonic partial sum bound]\label{lemma:harmonic}
For any integers $s,\bar{s}$, constant $c\in\mathbb{R}$ such that $1 \leq \bar{s} < s$, $p \neq -1$, and $a \geq 0$, it holds that
\begin{align*}
   \log (s + a) - \log (\bar{s} + a ) + \frac{1}{s + a} &\leq \sum_{n = \bar{s}}^{s} \frac{1}{n + a} \leq \frac{1}{\bar{s} + a} + \log ( s + a) - \log (\bar{s} + a), \\
   \frac{(s + a)^{p+1}}{p+1} - \frac{(\bar{s} + a)^{p+1}}{(p+1)} + (\bar{s} + a)^p &\leq \sum_{n = \bar{s}}^{s} (n + a)^p \leq \frac{(s + a)^{p+1}}{p+1} - \frac{(\bar{s} + a)^{p+1}}{p+1} + (s + a)^p, \text{ if } p \geq 0 \\
   \frac{(s + a)^{p+1}}{p+1} - \frac{(\bar{s} + a)^{p+1}}{p+1} + (s + a)^p &\leq \sum_{n = \bar{s}}^{s} (n + a)^p \leq \frac{(s + a)^{p+1}}{p+1} - \frac{(\bar{s} + a)^{p+1}}{p+1} + (\bar{s} + a)^p, \text{ if } p \leq 0
\end{align*}
\end{lemma}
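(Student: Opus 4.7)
The plan is to prove all three bounds by the standard integral comparison test applied to monotone functions. The identity $\int_{\bar s}^{s} (x+a)^p\,dx = \frac{(s+a)^{p+1}-(\bar s+a)^{p+1}}{p+1}$ (valid since $p\neq -1$) together with $\int_{\bar s}^{s}\frac{dx}{x+a} = \log(s+a) - \log(\bar s + a)$ shows that every bound in the lemma has the shape $\int_{\bar s}^{s} f + f(\bar s)$ or $\int_{\bar s}^{s} f + f(s)$, so we only need to verify these inequalities for monotone $f$ on an integer interval.

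First I would state the basic monotone integral comparison: if $f\colon [\bar s, s]\to \reals$ is non-increasing, then for every integer $n$ with $\bar s \leq n \leq s-1$ we have $f(n) \geq \int_{n}^{n+1} f(x)\,dx$ and $f(n+1) \leq \int_{n}^{n+1} f(x)\,dx$. Summing the first inequality over $n=\bar s,\ldots,s-1$ and adding $f(s)$ to both sides gives
\begin{equation*}
\sum_{n=\bar s}^{s} f(n) \;\geq\; f(s) + \int_{\bar s}^{s} f(x)\,dx,
\end{equation*}
and summing the second over the same range and adding $f(\bar s)$ to both sides gives
\begin{equation*}
\sum_{n=\bar s}^{s} f(n) \;\leq\; f(\bar s) + \int_{\bar s}^{s} f(x)\,dx.
\end{equation*}
For non-decreasing $f$ the same argument, with the two inequalities reversed, yields the symmetric pair with $f(\bar s)$ on the lower-bound side and $f(s)$ on the upper-bound side.

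I would then apply this template three times. For the first bound, take $f(x) = 1/(x+a)$, which is non-increasing on $[\bar s, s]$ since $a\geq 0$ and $\bar s\geq 1$; the decreasing case of the template together with the logarithmic integral gives exactly the claimed two-sided bound, with $f(s)=1/(s+a)$ and $f(\bar s)=1/(\bar s+a)$. For the second bound, take $f(x) = (x+a)^p$ with $p\geq 0$, which is non-decreasing; the increasing case of the template combined with the power integral gives the stated lower and upper bounds with the correct placement of $(\bar s+a)^p$ and $(s+a)^p$. For the third bound, take $f(x)=(x+a)^p$ with $p\leq 0$ and $p\neq -1$; this is non-increasing, so the decreasing case of the template applies, and the roles of the endpoint terms swap accordingly.

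There is no substantive obstacle beyond bookkeeping: the only thing that needs care is keeping track, in each of the three cases, of which endpoint term ($f(\bar s)$ or $f(s)$) accompanies the lower bound and which accompanies the upper bound, and this is dictated purely by whether $f$ is increasing or decreasing. Once the generic monotone-$f$ template is stated and proved, each of the three displayed inequalities in the lemma reduces to a one-line specialization.
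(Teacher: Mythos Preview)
Your proposal is correct and matches the paper's own proof essentially line for line: the paper also states the generic integral-comparison template for non-decreasing and non-increasing functions, then specializes to $f(x)=1/(x+a)$ and $f(x)=(x+a)^p$ using the corresponding antiderivatives.
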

\begin{proof}
Let $f_1:\mathbb{R}_{\geq 0} \rightarrow \mathbb{R}_{\geq 0}$ be a non-decreasing positive function and $f_2:\mathbb{R}_{\geq 0} \rightarrow \mathbb{R}_{\geq 0}$ be a non-increasing positive function.
Then it holds that
\begin{align*}
    \int_{x = \bar{s}}^s f_1(x) dx + f_1(\bar{s}) \leq \sum_{n=\bar{s}}^s f_1(n) \leq \int_{x = \bar{s}}^s f_1(x) dx  + f_1(s), \\
    \int_{x = \bar{s}}^s f_2(x) dx + f_2(s) \leq \sum_{n=\bar{s}}^s f_2(n) \leq \int_{x = \bar{s}}^s f_2(x) dx + f_2(\bar{s}).
\end{align*}
The result follows from a simple integral bound with $\int \frac{1}{x} dx = \log x$ and $\int x^p dx = \frac{x^{p+1}}{p+1}$.
\end{proof}

We state a certain recurrence inequality that appears several times in our analysis as a lemma, in order to shorten some proofs.

\begin{lemma}[General error recurrence]\label{lemma:general_recurrence}
Let $c_0 \geq 0, c_1 \geq 0, \gamma > 1, a \geq 0$ be arbitrary constants such that $a \geq \gamma$.
Furthermore, let $\{u_t\}_{t=0}^\infty$ be a sequence of non-negative numbers such that for all $t \geq 0$, it holds that
\begin{align*}
    u_{t+1} \leq \frac{c_0}{t+a} + \frac{c_1}{(t+a)^2} + \left( 1 - \frac{\gamma}{t+a}\right) u_t.
\end{align*}
Then, for all values of $t\geq 0$, it holds that:
\begin{align*}
u_{t+1} \leq &\frac{u_0 a ^ \gamma + c_1 (a^{\gamma-2} + 1) (1+a^{-1})^\gamma + c_0 (1+a^{-1})^\gamma a^{\gamma-1}}{\left(t+a\right)^\gamma}  \\
    &\quad + \frac{c_0 + c_1 (1+a^{-1})^\gamma (\gamma - 1)^{-1}}{t+a} + \frac{c_1}{(t+a)^2} + \gamma^{-1}(1+a^{-1})^\gamma c_0 
\end{align*}
    
\end{lemma}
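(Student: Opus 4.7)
The plan is to unroll the one-step inequality and estimate the resulting product and weighted sum by elementary integral-comparison bounds. Writing $\alpha_s := 1 - \frac{\gamma}{s+a}$ and $\beta_s := \frac{c_0}{s+a} + \frac{c_1}{(s+a)^2}$, iterating the hypothesis gives
\begin{align*}
u_{t+1} \leq \Big(\prod_{s=0}^t \alpha_s\Big) u_0 + \sum_{k=0}^t \beta_k \prod_{s=k+1}^t \alpha_s.
\end{align*}
The condition $a \geq \gamma > 1$ ensures $\alpha_s \in [0,1)$, so every factor in the expansion is non-negative and the expansion is valid term by term.

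The first key step is to telescope the products. Applying Bernoulli's inequality with exponent $\gamma \geq 1$ and $x = 1/(s+a) \in (0, 1/\gamma]$ yields $(1 - 1/(s+a))^\gamma \geq 1 - \gamma/(s+a)$, i.e., $\alpha_s \leq \big(\tfrac{s+a-1}{s+a}\big)^\gamma$. The product then telescopes to
\begin{align*}
\prod_{s=k+1}^t \alpha_s \leq \left(\frac{k+a}{t+a}\right)^\gamma,
\end{align*}
and similarly $\prod_{s=0}^t \alpha_s \leq (a/(t+a))^\gamma$. Substitution and the definition of $\beta_k$ reduce the bound to
\begin{align*}
u_{t+1} \leq \frac{a^\gamma u_0}{(t+a)^\gamma} + \frac{c_0}{(t+a)^\gamma}\sum_{k=0}^t (k+a)^{\gamma-1} + \frac{c_1}{(t+a)^\gamma}\sum_{k=0}^t (k+a)^{\gamma-2}.
\end{align*}

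Next I would bound each power sum using Lemma~\ref{lemma:harmonic}. The first sum has exponent $\gamma - 1 \geq 0$ and yields $\frac{(t+a)^\gamma}{\gamma} + (t+a)^{\gamma-1}$, contributing $\frac{c_0}{\gamma} + \frac{c_0}{t+a}$ once divided by $(t+a)^\gamma$. The main obstacle is the second sum: the exponent $\gamma - 2$ is not sign-determined, so Lemma~\ref{lemma:harmonic} produces a different boundary correction in the two regimes ($\gamma \geq 2$ gives boundary term $(t+a)^{\gamma-2}$, while $1 < \gamma < 2$ gives boundary term $a^{\gamma-2}$). Both regimes are uniformly captured by $\sum_{k=0}^t (k+a)^{\gamma-2} \leq \frac{(t+a)^{\gamma-1}}{\gamma-1} + \max\{(t+a)^{\gamma-2}, a^{\gamma-2}\}$, and bounding the maximum by the sum gives a combined contribution of $\frac{c_1}{(\gamma-1)(t+a)} + \frac{c_1}{(t+a)^2} + \frac{c_1 a^{\gamma-2}}{(t+a)^\gamma}$.

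Assembling the three contributions produces
\begin{align*}
u_{t+1} \leq \frac{u_0 a^\gamma + c_1 a^{\gamma-2}}{(t+a)^\gamma} + \frac{c_0 + c_1/(\gamma-1)}{t+a} + \frac{c_1}{(t+a)^2} + \frac{c_0}{\gamma},
\end{align*}
which is term-by-term no larger than the stated bound, since $(1+a^{-1})^\gamma \geq 1$ and the extra $c_1(1+a^{-1})^\gamma$ and $c_0 (1+a^{-1})^\gamma a^{\gamma-1}$ pieces in the claimed $(t+a)^{-\gamma}$ coefficient are non-negative slack, introduced presumably so that downstream uses do not have to branch on whether $\gamma$ lies above or below $2$. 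No induction is required once the telescoping product bound and the two power-sum estimates are in place.
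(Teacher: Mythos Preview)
Your proof is correct and actually cleaner than the paper's. Both arguments unroll the recurrence and then estimate the resulting products $\prod_{s}\bigl(1-\tfrac{\gamma}{s+a}\bigr)$ and power sums, but the route to the product estimate differs. The paper uses $1-x\le e^{-x}$, converts the product into $\exp\!\bigl(-\sum_s \tfrac{\gamma}{s+a}\bigr)$, and then lower-bounds the harmonic-type sum via Lemma~\ref{lemma:harmonic}; this yields $\prod_{s'=s+1}^{t}\alpha_{s'}\le \bigl(\tfrac{s+a+1}{t+a}\bigr)^\gamma$, and the off-by-one in the numerator is exactly what produces the $(1+a^{-1})^\gamma$ factors in the stated bound. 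You instead use Bernoulli's inequality $(1-1/(s+a))^\gamma \ge 1-\gamma/(s+a)$ and telescope directly to $\bigl(\tfrac{k+a}{t+a}\bigr)^\gamma$, which is sharper and explains why every $(1+a^{-1})^\gamma$ in the lemma's conclusion appears to you as pure slack. Your treatment of the two regimes $\gamma\gtrless 2$ for the $(k+a)^{\gamma-2}$ sum matches the paper's case split, and bounding the $\max$ by the sum is a harmless way to unify them. Net effect: your argument proves a strictly tighter inequality, from which the lemma's stated bound follows a fortiori.
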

\begin{proof}
We note that inductively, we have
\begin{align*}
u_{t+1} \leq &\frac{c_0}{t+a} + \frac{c_1}{(t+a)^2} + u_0 \prod_{s=0}^{t} \left( 1 - \frac{\gamma}{s+a} \right) 
     + \sum_{s=0}^{t-1} \left( \frac{c_0}{s+a} + \frac{c_1}{(s+a)^2} \right) \prod_{s'=s+1}^{t} \left(1 - \frac{\gamma}{s'+a} \right).
\end{align*}
Using the inequality $1 + x \leq e^{x}$, we obtain
\begin{align*}
u_{t+1} \leq &\frac{c_0}{t+a} + \frac{c_1}{(t+a)^2} + u_0 \prod_{s=0}^{t} \exp\left\{- \frac{\gamma}{s+a}\right\} + \sum_{s=0}^{t-1} \left( \frac{c_0}{s+a} + \frac{c_1}{(s+a)^2} \right) \prod_{s'=s+1}^{t} \exp\left\{ - \frac{\gamma}{s'+a}\right\} \\
\leq &\frac{c_0}{t+a} + \frac{c_1}{(t+a)^2} + u_0 \exp\left\{- \sum_{s=0}^{t}\frac{\gamma}{s+a}\right\} + \sum_{s=0}^{t-1} \left( \frac{c_0}{s+a} + \frac{c_1}{(s+a)^2} \right)  \exp\left\{ -\sum_{s'=s+1}^{t} \frac{\gamma}{s'+a}\right\}. 
\end{align*}
Here, using Lemma~\ref{lemma:harmonic}, since $a - 1 > 0$, it holds that
\begin{align*}
  \sum_{s=0}^{t}\frac{\gamma}{s+a} &\geq \sum_{s = 1}^{t+1} \frac{\gamma}{s+(a - 1)} \geq \gamma \log(t+a) - \gamma \log a = \log \left(\frac{(t+1)^\gamma}{a^\gamma}\right) \\
  \sum_{s'=s+1}^{t} \frac{\gamma}{s'+a} &\geq \gamma \log(t + a) - \gamma \log(s + a + 1) \geq \log \left\{ \frac{(t+a)^\gamma}{(s+a+1)^\gamma} \right\},
\end{align*}
therefore $u_{t+1}$ can be further upper bounded by
\begin{align*}
u_{t+1} \leq &\frac{c_0}{t+a} + \frac{c_1}{(t+a)^2} + u_0 \frac{a^\gamma}{ \left(t+a\right)^\gamma} + \sum_{s=0}^{t-1} \left( \frac{c_0}{s+a}  + \frac{c_1}{(s+a)^2} \right) \left(\frac{s + a + 1}{t+a}\right)^\gamma \\
\leq &\frac{c_0}{t+a} + \frac{c_1}{(t+a)^2} + \frac{u_0 a ^ \gamma}{\left(t+a\right)^\gamma} + (t+a)^{-\gamma} \sum_{s=0}^{t-1} \left( \frac{c_0}{s+a} + \frac{c_1}{(s+a)^2} \right) \left(s+a+1\right)^\gamma \\
\leq &\frac{c_0}{t+a} + \frac{c_1}{(t+a)^2} + \frac{u_0 a ^ \gamma}{\left(t+a\right)^\gamma} + (t+a)^{-\gamma} (1 + a^{-1})^\gamma \sum_{s=0}^{t-1} \left( c_0 \left(s+a\right)^{\gamma-1} + c_1\left(s+a\right)^{\gamma-2} \right) .
\end{align*}
The last term can be bound with the corresponding integral (see Lemma~\ref{lemma:harmonic}), yielding (since $\gamma-1 > 0$):
\begin{align*}
    \sum_{s=0}^{t-1}(s+a)^{\gamma - 1} &\leq \frac{(t+a)^\gamma}{\gamma} + a ^ {\gamma - 1}.
\end{align*}
For the term $\sum_{s=0}^{t-1}(s+1)^{\gamma - 2}$, we analyzing the two cases $1 < \gamma \leq 2$ and $\gamma > 2$ using Lemma~\ref{lemma:harmonic} we obtain
\begin{align*}
    \sum_{s=0}^{t-1}(s+a)^{\gamma - 2} &\leq \frac{(t+a)^{\gamma-1}}{\gamma - 1} + a^{\gamma - 2} + 1.
\end{align*}
The two inequalities combined yield the stated bound,
\begin{align*}
    u_{t+1} \leq & \frac{c_0}{t+a} + \frac{c_1}{(t+a)^2} + \frac{u_0 a ^ \gamma}{\left(t+a\right)^\gamma} + (t+a)^{-\gamma} (1 + a^{-1})^\gamma \sum_{s=0}^{t-1} \left( c_0(s+a)^{\gamma-1} + c_1(s+a)^{\gamma - 2} \right) \\
    \leq & \frac{c_0}{t+a} + \frac{c_1}{(t+a)^2} + \frac{u_0 a ^ \gamma}{\left(t+a\right)^\gamma} + \gamma^{-1}(1+a^{-1})^\gamma c_0 + \frac{c_0 (1+a^{-1})^\gamma a^{\gamma-1}}{(t+a)^\gamma} \\ 
    &\quad +\frac{c_1 (1+a^{-1})^\gamma}{(t+a)(\gamma - 1)} + \frac{c_1 (a^{\gamma-2} + 1) (1+a^{-1})^\gamma}{(t+a)^\gamma} \\
    \leq & \frac{u_0 a ^ \gamma + c_1 (a^{\gamma-2} + 1) (1+a^{-1})^\gamma + c_0 (1+a^{-1})^\gamma a^{\gamma-1}}{\left(t+a\right)^\gamma}  \\
    &\quad + \frac{c_0 + c_1 (1+a^{-1})^\gamma (\gamma - 1)^{-1}}{t+a} + \frac{c_1}{(t+a)^2} + \gamma^{-1}(1+a^{-1})^\gamma c_0 .
\end{align*}
\end{proof}

\section{Proofs of Technical Results}

\subsection{Proof of Lemma~\ref{lemma:technical_bound_1N}}\label{app:technical_lemma}
\begin{proof}
We introduce an auxiliary random variable $\bar{a}^i$ which is independent from other players' actions $\{ a^j\}_{j=1}^N$ and has distribution $\vecpi^i$, that is, we introduce the random variable $\bar{a}^i$ as an identically distributed independent copy of $a^i$.
Then, it holds by simple computation that
\begin{align*}
V^i(\vecpi^1, \ldots, \vecpi^N) = & \Exop \Big[ \vecF\Big( \frac{1}{N} \sum_{j=1}^N \vece_{a^j}, a^i\Big) \Big]\\
= &\Exop \Big[\vecF\Big(\frac{1}{N}\Big(\sum_{j=1, j\neq i}^N \vece_{a^j} + \vece_{\bar{a}^i}\Big), a^i \Big)\Big] \\ 
    &+ \Exop \Big[ \vecF\Big(\frac{1}{N}\sum_{j=1}^N \vece_{a^j}, a^i\Big) -\vecF\Big( \frac{1}{N}\Big(\sum_{j=1, j\neq i}^N \vece_{a^j} + \vece_{\bar{a}^i}\Big), a^i\Big)\Big].
\end{align*}
For the first term above, we observe that
\begin{align*}
   \Exop \Big[\vecF\Big( \frac{1}{N}\Big(\sum_{j=1, j\neq i}^N \vece_{a^j} + \vece_{\bar{a}^i}\Big), a^i\Big)\Big] = &\Exop \Big[ \Exop \Big[\vecF\Big( \frac{1}{N}\Big(\sum_{j=1, j\neq i}^N \vece_{a^j} + \vece_{\bar{a}^i}\Big), a^i\Big) \Big| a^i\Big]\Big] \\
   = &\Exop \Big[ \Exop \Big[\vece_{a^i}^\top \vecF\Big(\frac{1}{N}\Big(\sum_{j=1, j\neq i}^N \vece_{a^j} + \vece_{\bar{a}^i}\Big)\Big) \Big| a^i\Big]\Big] \\
   = &\Exop [ \vece_{a^i}^\top] \Exop \Big[ \vecF\Big(\frac{1}{N}\Big(\sum_{j=1, j\neq i}^N \vece_{a^j} + \vece_{\bar{a}^i}\Big)\Big)\Big] \\
   = & \vecpi^{i, \top} \Exop [\vecF(\widehat{\vecmu})],
\end{align*}
since $\{a^j \}_{j=1}^N$ and $(\bar{a}^i,\veca^{-i})$ are identically distributed by the independence of both $\bar{a}^i$ and $a^i$ from $\veca^{-i}$, and since
$(\bar{a}^i,\veca^{-i})$ is independent of $a^i$.

The second term above can be bounded using
\begin{align*}
    \Big|\vecF\Big(&\frac{1}{N}\sum_{j=1}^N \vece_{a^j}, a^i\Big) -\vecF\Big( \frac{1}{N}\Big(\sum_{j=1, j\neq i}^N \vece_{a^j} + \vece_{\bar{a}^i}\Big), a^i\Big)\Big| \\
    = & \Big|\vece_{a^i}^\top\vecF\Big(\frac{1}{N}\sum_{j=1}^N \vece_{a^j}\Big) -\vece_{a^i}^\top \vecF\Big( \frac{1}{N}\Big(\sum_{j=1, j\neq i}^N \vece_{a^j} + \vece_{\bar{a}^i}\Big)\Big)\Big| \\
    \leq & \left\| \vece_{a^i} \right\|_2 \Big\| \vecF\Big(\frac{1}{N}\sum_{j=1}^N \vece_{a^j}\Big) -\vecF\Big( \frac{1}{N}\Big(\sum_{j=1, j\neq i}^N \vece_{a^j} + \vece_{\bar{a}^i}\Big)\Big) \Big\|_2 \\
    \leq & \frac{L}{N} \left\| \vece_{a^i} - \vece_{\bar{a}^i}\right\|_2 \leq \frac{L\sqrt{2}}{N}.
\end{align*}
The last line follows from the fact that $\vecF$ is $L$-Lipschitz.
\end{proof}

\subsection{Proof of Lemma~\ref{lemma:phi_lipschitz}}\label{sec:extended_proof_lema_lipschitz_phi}

We first prove the fact that $V^i$ is Lipschitz.
In the proof, we denote the empirical action distribution induced by actions $\{a^j\}_{j=1}^N\in\setA^N$ by $\widehat{\vecmu}(\{a^j\}_{j=1}^N) \in \Delta_\setA$ to simplify notation.
We analyze two cases of Lipschitz moduli stated in the lemma separately. In the first case, we compute the Lipschitz moduli $L_{i,i}$ for any $i\in\setN$ as follows:
\begin{align*}
    |V^i&(\vecpi, \vecpi^{-i}) - V^i (\vecpi', \vecpi^{-i})| \\
    \leq & \Big| \Exop \left[ \vecF\left(\widehat{\vecmu}(\{a^k\}_{k=1}^N), a^i\right) \middle|
a^j \sim \vecpi^j, \forall j \neq i, a^i \sim \vecpi
\right] \\
    &\quad - \Exop \left[ \vecF\left(\widehat{\vecmu}(\{a^k\}_{k=1}^N),a^i\right) \middle|
a^j \sim \vecpi^j, \forall j \neq i, a^i \sim \vecpi' \right] \Big| \\
\leq & \big| \sum_{\substack{a^j \in \setA \\ j\neq i}}  \sum_{a^i \in \setA}\vecpi(a^i) \vecF(\widehat{\vecmu}(\{a^k\}_{k=1}^N), a^i)\prod_{j\neq i} \vecpi^j(a^j) - \sum_{\substack{a^j \in \setA \\ j\neq i}} \sum_{a^i \in \setA} \vecpi'(a^i) \vecF(\widehat{\vecmu}(\{a^k\}_{k=1}^N), a^i) \prod_{j\neq i} \vecpi^j(a^j) 
 \Big| \\
\leq & \sum_{\substack{a^j \in \setA \\ j\neq i}}  \Big|\sum_{a^i \in \setA}\left[ \vecpi(a^i) - \vecpi'(a^i)\right] \vecF(\widehat{\vecmu}(\{a^k\}_{k=1}^N), a^i) \Big| \prod_{j\neq i} \vecpi^j(a^j) \\
\leq & \sum_{\substack{a^j \in \setA \\ j\neq i}}  \left\| \vecpi - \vecpi'\right\|_2 \sqrt{\sum_{a^i \in \setA} \vecF(\widehat{\vecmu}(\{a^k\}_{k=1}^N), a^i)^2} \prod_{j\neq i} \vecpi^j(a^j) 
\leq \left\| \vecpi - \vecpi'\right\|_2 \sqrt{K}.
\end{align*}
where we use Jensen's inequality in the penultimate step and the Cauchy-Schwartz inequality in the final step.

In the second case, for any $k \neq i$, it holds that
\begin{align*}
|V^i&(\vecpi, \vecpi^{-k}) - V^i (\vecpi', \vecpi^{-k})| \\
    \leq & \Exop \left[ \vecF\left(\widehat{\vecmu}(\{a^l\}_{l=1}^N), a^i\right) \middle|
a^j \sim \vecpi^j, \forall j \neq k, a^k \sim \vecpi
\right] - \Exop \left[ \vecF\left(\widehat{\vecmu}(\{a^l\}_{l=1}^N),a^i\right) \middle|
a^j \sim \vecpi^j, \forall j \neq k, a^k \sim \vecpi' \right] \\
    \leq & \Big| \sum_{\substack{a^j \in \setA \\ j\neq k}} \prod_{j\neq k} \vecpi^j(a^j) \sum_{a^k \in \setA}\vecpi(a^k) \vecF(\widehat{\vecmu}(\{a^l\}_{l=1}^N), a^i) - \sum_{\substack{a^j \in \setA \\ j\neq k}} \prod_{j\neq k} \vecpi^j(a^j) \sum_{a^k \in \setA} \vecpi'(a^k) \vecF(\widehat{\vecmu}(\{a^l\}_{l=1}^N), a^i) \Big| \\
\leq & \sum_{\substack{a^j \in \setA \\ j\neq k}} \prod_{j\neq k} \vecpi^j(a^j) \Big|\sum_{a^k \in \setA}\left[ \vecpi(a^k) - \vecpi'(a^k)\right] \vecF(\widehat{\vecmu}(\{a^l\}_{l=1}^N), a^i) \Big|
\end{align*}
In this case, note that for any $a, a' \in \setA, \veca\in \setA^K$, we have $|\vecF(\widehat{\vecmu}(a, \veca^{-k}), a^i) - \vecF(\widehat{\vecmu}(a', \veca^{-k}), a^i)| \leq \|\vecF(\widehat{\vecmu}(a, \veca^{-k})) - \vecF(\widehat{\vecmu}(a', \veca^{-k})) \|_2 \leq \sfrac{L\sqrt{2}}{N}$.
That is, the set $\{\vecF(\widehat{\vecmu}(a, \veca^{-k}), a^i) \, : a\in\setA \} \subset \mathbb{R}$ has diameter $\sfrac{2L}{\sqrt{N}}$, and
there exists a constant $v_{k} \in \mathbb{R}$ such that $|\vecF(\widehat{\vecmu}(a, \veca^{-k}), a^i) - v_{k}| \leq \sfrac{2L\sqrt{2}}{N}$ for all $a$.
Then,
\begin{align*}
|V^i&(\vecpi, \vecpi^{-k}) - V^i (\vecpi', \vecpi^{-k})| \\
    \leq & \sum_{\substack{a^j \in \setA \\ j\neq k}} \prod_{j\neq k} \vecpi^j(a^j) \Big|\sum_{a^k \in \setA}\left[ \vecpi(a^k) - \vecpi'(a^k)\right] \left[ \vecF(\widehat{\vecmu}(\{a^l\}_{l=1}^N), a^i) - v_{k}\right] \Big| \\
    \leq & \sum_{\substack{a^j \in \setA \\ j\neq k}} \prod_{j\neq k} \vecpi^j(a^j) \|\vecpi - \vecpi'\|_2 \sqrt{\sum_{a^k\in \setA} \left[ \vecF(\widehat{\vecmu}(\{a^l\}_{l=1}^N), a^i) - v_{k}\right]^2 } \\
    \leq & \sum_{\substack{a^j \in \setA \\ j\neq k}} \prod_{j\neq k} \vecpi^j(a^j) \|\vecpi - \vecpi'\|_2 \frac{2L\sqrt{2K}}{N} \leq \|\vecpi - \vecpi'\|_2 \frac{2L \sqrt{2K}}{N}.
\end{align*}

We establish the Lipschitz continuity of $\setE^i_{\text{exp}}$ with the above inequalities.
\begin{align*}
    |\setE^i_{\text{exp}} &(\vecpi, \vecpi^{-i}) - \setE^i_{\text{exp}}(\vecpi', \vecpi^{-i}) | \\
    \leq & \left|\max_{\overline{\vecpi}\in \Delta_\setA} V^i(\overline{\vecpi}, \vecpi^{-i}) - V^i(\vecpi, \vecpi^{-i}) - \max_{\overline{\vecpi}\in \Delta_\setA} V^i(\overline{\vecpi}, \vecpi^{-i}) + V^i(\vecpi', \vecpi^{-i})\right| \\
    \leq & \left| V^i(\vecpi', \vecpi^{-i}) - V^i(\vecpi, \vecpi^{-i})\right| 
    \leq \sqrt{K} \|\vecpi - \vecpi'\|_2.
\end{align*}
Similarly, for $k\neq i$,
\begin{align*}
|\setE^i_{\text{exp}} &(\vecpi, \vecpi^{-k}) - \setE^i_{\text{exp}}(\vecpi', \vecpi^{-k}) | \\
    \leq & \left|\max_{\overline{\vecpi}\in \Delta_\setA} [ V^i( \vecpi, \overline{\vecpi},\vecpi^{-k,i}) - V^i(\vecpi, \vecpi^{-k}) ] - \max_{\overline{\vecpi}\in \Delta_\setA} [V^i( \vecpi', \overline{\vecpi},\vecpi^{-k,i}) - V^i(\vecpi', \vecpi^{-k}) ]\right| \\
    \leq & \max_{\overline{\vecpi}\in \Delta_\setA} \left| V^i( \vecpi, \overline{\vecpi},\vecpi^{-k,i}) - V^i( \vecpi',\overline{\vecpi}, \vecpi^{-k,i}) \right| + \left| V^i(\vecpi, \vecpi^{-k}) - V^i(\vecpi', \vecpi^{-k})\right| \\
    \leq & \frac{4L\sqrt{2K}}{N} \| \vecpi - \vecpi'\|_2.
\end{align*}

\subsection{Extended Definitions for Bandit Feedback}\label{sec:bandit_extended_defs}

When analyzing the TRPA-Bandit dynamics, several random variables and events will be reused to assist analysis.
For brevity, we define them here.
We define the following random variables:
\begin{align*}
    \mathbbm{1}_{h,t}^i := &\mathbbm{1}\{\text{player $i$ explores at round $t$ of epoch $h$}\} = X_{h,t}^i \\
    E_{h,t}^i := &\{\mathbbm{1}_{h,t}^i = 1 \} \\
    \mathbbm{1}_{h}^i := &\mathbbm{1}\{\text{player $i$ explores at least once during epoch $h$}\} = \max_{t = 1, \ldots, T_h} \mathbbm{1}_{h,t}^i  \\
    E_{h}^i := &\{\mathbbm{1}_{h}^i = 1 \} = \bigcup_{t=1}^{T_h} E_{h,t}^i\\
    a_{h}^i := &\text{Last explored action in epoch $h$ by agent $i$,} \\
        &\text{and $a_0$ if no exploration occurred}. \\
    s_h^i := &\text{Timestep when exploration last occurred in epoch $h$ by agent $i$, }\\
        &\text{and $0$ if no exploration occurred.} \in \{1, \ldots, T_h \}
\end{align*}

\subsection{Proof of Lemma~\ref{lemma:exploration_bias_trpa_bandit}}\label{sec:proof_lemma_bandit_exploration_bias}

We will reuse the definitions of Section~\ref{sec:bandit_extended_defs}.
By the definition of the events and the probabilistic exploration scheme, we have $\widehat{\vecr}_h^i = K \left( \vecF(\widehat{\vecmu}_{s_h^{i},h}, a_h^{i}) + \vecn_{s_h^{i},h}^{i}(a_h^{i}) \right) \mathbbm{1}_h^i$.
Firstly, by the law of total expectations and the fact that $E_h^i$ are independent of $\mathcal{F}_h$,
\begin{align*}
    \Exop\left[\widehat{\vecr}_h^i \middle| \mathcal{F}_h\right] = &\Exop\left[\widehat{\vecr}_h^i \middle|E_{h}^i, \mathcal{F}_h\right] \Prob(E_{h}^i) + \Exop\left[\widehat{\vecr}_h^i\middle|\overline{E_{h}^{i}}, \mathcal{F}_h\right] \Prob(\overline{E_{h}^{i}}) \\
    = & \Exop\left[\widehat{\vecr}_h^i \middle|E_{h}^i, \mathcal{F}_h\right] - \underbrace{\Exop\left[\widehat{\vecr}_h^i\middle|E_{h}^{i}, \mathcal{F}_h\right] \Prob(\overline{E_{h}^{i}}) + \Exop\left[\widehat{\vecr}_h^i\middle|\overline{E_{h}^{i}}, \mathcal{F}_h\right] \Prob(\overline{E_{h}^{i}})}_{:=\vecb_h^i} \\
    = & \Exop\left[\widehat{\vecr}_h^i \middle|E_{h}^i, \mathcal{F}_h\right] + \vecb_h^i
\end{align*}
for $\vecb_h^i$ quantifying a bias induced due to the probability of no exploration.
We have that
\begin{align*}
    \| \vecb_h^i \|_2 \leq K\sqrt{K} \sqrt{1 + \sigma^2} \exp\left\{ -\varepsilon T_{h}\right\}
\end{align*}
since $\Exop\left[\widehat{\vecr}_h^i\middle|\overline{E_{h}^{i}}, \mathcal{F}_h\right] = 0$ and exploration probabilities are determined by independent random Bernoulli variables hence $\Prob(\overline{E_{h}^{i}}) = \left( 1 - \varepsilon \right)^{T_h} \leq \exp\left\{ -\varepsilon T_{h}\right\}.$
To further characterize the bias, we introduce a coupling argument.
Define independent random variables $\bar{a}_h^j \sim \varepsilon \vecpi_\text{unif} + (1-\varepsilon) \vecpi_h^j$ for all $j\in\setN$ and $\bar{a}^i_{h, exp} \sim \vecpi_\text{unif}$.
By the definition, it holds that (where $\widehat{\vecmu}(\{ \bar{a}_h ^ j \}_j) \in \Delta_\setA$ denotes the empirical distribution induced by actions $\{ \bar{a}_h ^ j \}_j$):
\begin{align*}
    \|\Exop \left[\widehat{\vecr}_h^i \middle|E_{h}^i, \mathcal{F}_h\right] - \Exop\left[ \vecF(\widehat{\vecmu}(\{ \bar{a}_h ^ j \}_j)) \middle| \mathcal{F}_h \right]\|_2 
    \leq & \|\Exop \left[\vecF(\widehat{\vecmu}(\bar{a}_{h,exp}^i, \bar{a}_h ^ {-i})) \middle| \mathcal{F}_h \right] - \Exop\left[ \vecF(\widehat{\vecmu}(\{ \bar{a}_h ^ j \}_j))  \middle| \mathcal{F}_h\right]\|_2 \\
    \leq & \Exop \left[\|\vecF(\widehat{\vecmu}(\bar{a}_{h,exp}^i, \bar{a}_h ^ {-i})) - \vecF(\widehat{\vecmu}(\{ \bar{a}_h ^ j \}_j)) \|_2 \middle| \mathcal{F}_h\right] \leq \frac{2 L }{N}.
\end{align*}
With the additional bound $\Exop \left[\|\vecF(\varepsilon \vecpi_\text{unif} + (1-\varepsilon) \bar{\vecmu}_h) - \vecF(\widehat{\vecmu}(\{ \bar{a}_h ^ j \}_j)) \|_2 \middle| \mathcal{F}_h \right] \leq \frac{2 L }{\sqrt{N}} $, we obtain the lemma.

\subsection{Proof of Lemma~\ref{lemma:bandit_main_recurrence}}\label{sec:proof_lemma_bandit_recurrence}

We will reuse the definitions of Section~\ref{sec:bandit_extended_defs}.
We formulate a recurrence for the main error term of interest, $\| \vecpi_{t+1}^i - \vecpi^*\|_2^2$,
Repeating the steps presented in Lemma~\ref{lemma:full_error_recurrence}, (noting $\alpha_h := 1 - \tau\eta_h$), our proof strategy is to analyze the three terms in the following decomposition:
\begin{align*}
    \| \vecpi_{h+1}^i - \vecpi^*\|_2^2  
    \leq & \underbrace{\eta_h^2\| \widehat{\vecr}_h^i - \vecF(\vecpi_h^i) \|_2^2 + 2\eta_h^2 (\vecF(\vecpi_h^i) - \vecF(\vecpi^*)))^\top (\widehat{\vecr}_h^i - \vecF(\vecpi_h^i))}_{(a)} \\
     &+ \underbrace{2\eta_h\alpha_h (\vecpi_h^i - \vecpi^*)^\top (\widehat{\vecr}_h^i - \vecF(\vecpi_h^i))}_{(b)} + \underbrace{\|\alpha_h (\vecpi_h^i - \vecpi^*) + \eta_h (\vecF(\vecpi_h^i) - \vecF(\vecpi^*))\|_2^2 }_{(c)}
\end{align*}
Once again, we will need to upper bound the three terms above.
For the term $(a)$ we have $\Exop\left[ (a)\right] \leq 4\eta_t^2 K^3(\sigma^2 + 1)$, noting that $\|\widehat{\vecr}_h^i\|_2^2 \leq K^3$ almost surely.
Likewise, it still holds for the term $(c)$ by Lemma~\ref{lemma:contraction_pg} that
\begin{align*}
    (c) 
        \leq & \left(1 - 2 (\lambda + \tau) \eta_h + (L + \tau)^2 \eta_h^2\right) \| \vecpi_h^i - \vecpi^* \|_2^2.
\end{align*}
However, unlike the bound in Lemma~\ref{lemma:full_error_recurrence}, the exploration parameter $\varepsilon$ will cause additional bias in the term $(b)$.
Define the random vector $\widetilde{\vecr}_h^i = \Exop[ \widehat{\vecr}_h^i | E_h^i, \mathcal{F}_h]$.
\begin{align*}
(b) = & 2\eta_h\alpha_h (\vecpi_h^i - \vecpi^*)^\top (\widehat{\vecr}_h^i - \vecF(\vecpi_h^i)) \\
 = & 2\eta_h\alpha_h (\vecpi_h^i - \vecpi^*)^\top (\widehat{\vecr}_h^i - \widetilde{\vecr}_h^i) + 2\eta_h\alpha_h (\vecpi_h^i - \vecpi^*)^\top (\widetilde{\vecr}_h^i - \vecF(\bar{\vecmu}_h)) \\
    & + 2\eta_h\alpha_h (\vecpi_h^i - \vecpi^*)^\top (\vecF(\bar{\vecmu}_h) - \vecF(\vecpi_h^i)) \\
\leq &2\eta_h\alpha_h \left( \frac{\lambda}{4} \|\vecpi_h^i - \vecpi^* \|_2^2 + \frac{1}{\lambda} \|\widetilde{\vecr}_h^i - \vecF(\bar{\vecmu}_h)\|_2^2\right) \\
    & + 2\eta_h\alpha_h \left(\frac{\lambda}{4} \|\vecpi_h^i - \vecpi^* \|_2^2 + \frac{1}{\lambda} \|\vecF(\bar{\vecmu}_h) - \vecF(\vecpi_h^i)\|_2^2 \right) \\
    &+ 2\eta_h\alpha_h (\vecpi_h^i - \vecpi^*)^\top (\widehat{\vecr}_h^i - \widetilde{\vecr}_h^i) \\
\leq & 2 \eta_h \frac{\lambda}{2} \|\vecpi_h^i - \vecpi^* \|_2^2 + \frac{2\eta_h}{\lambda}\|\widetilde{\vecr}_h^i - \vecF(\bar{\vecmu}_h)\|_2^2 + \frac{2\eta_h}{\lambda} \|\vecF(\bar{\vecmu}_h) - \vecF(\vecpi_h^i)\|_2^2 \\
    &+2\eta_h\alpha_h (\vecpi_h^i - \vecpi^*)^\top (\widehat{\vecr}_h^i - \widetilde{\vecr}_h^i),
\end{align*}
and similarly, if $\lambda = 0$, we have
\begin{align*}
(b) = & 2\eta_h\alpha_h (\vecpi_h^i - \vecpi^*)^\top (\widehat{\vecr}_h^i - \vecF(\vecpi_h^i)) \\
 = & 2\eta_h\alpha_h (\vecpi_h^i - \vecpi^*)^\top (\widehat{\vecr}_h^i - \widetilde{\vecr}_h^i) + 2\eta_h\alpha_h (\vecpi_h^i - \vecpi^*)^\top (\widetilde{\vecr}_h^i - \vecF(\bar{\vecmu}_h)) \\
    & + 2\eta_h\alpha_h (\vecpi_h^i - \vecpi^*)^\top (\vecF(\bar{\vecmu}_h) - \vecF(\vecpi_h^i)) \\
\leq &2\eta_h\alpha_h \left( \frac{\tau\delta}{2} \|\vecpi_h^i - \vecpi^* \|_2^2 + \frac{1}{2\tau\delta} \|\widetilde{\vecr}_h^i - \vecF(\bar{\vecmu}_h)\|_2^2\right) \\
    & + 2\eta_h\alpha_h \left(\frac{\tau\delta}{2} \|\vecpi_h^i - \vecpi^* \|_2^2 + \frac{1}{2\tau\delta} \|\vecF(\bar{\vecmu}_h) - \vecF(\vecpi_h^i)\|_2^2 \right) \\
    &+ 2\eta_h\alpha_h (\vecpi_h^i - \vecpi^*)^\top (\widehat{\vecr}_h^i - \widetilde{\vecr}_h^i) \\
\leq & 2 \eta_h \tau\delta \|\vecpi_h^i - \vecpi^* \|_2^2 + \frac{\eta_h}{\tau\delta}\|\widetilde{\vecr}_h^i - \vecF(\bar{\vecmu}_h)\|_2^2 + \frac{\eta_h}{\tau\delta} \|\vecF(\bar{\vecmu}_h) - \vecF(\vecpi_h^i)\|_2^2 \\
    &+2\eta_h\alpha_h (\vecpi_h^i - \vecpi^*)^\top (\widehat{\vecr}_h^i - \widetilde{\vecr}_h^i).
\end{align*}

Denote $\vecpi_{unif} := \frac{1}{K}\vecone_K$.
The remaining error terms we can bound by (using the auxiliary coupling random actions $\bar{a}_h^j$ from Lemma~\ref{lemma:exploration_bias_trpa_bandit}):
\begin{align*}
    |\Exop[2\eta_h\alpha_h (\vecpi_h^i - \vecpi^*)^\top (\widehat{\vecr}_h^i - \widetilde{\vecr}_h^i) | \mathcal{F}_h] | 
        \leq & |\Exop[2\eta_h\alpha_h (\vecpi_h^i - \vecpi^*)^\top (\widehat{\vecr}_h^i - \widetilde{\vecr}_h^i) | E_h^i, \mathcal{F}_h] \mathbb{P}(E_h^i) \\
        & + \Exop[2\eta_h\alpha_h (\vecpi_h^i - \vecpi^*)^\top (\widehat{\vecr}_h^i - \widetilde{\vecr}_h^i) | \overline{E_h^i}, \mathcal{F}_h]\mathbb{P}(\overline{E_h^i})| \\
    \leq & 2\eta_h \Exop\left[\|\vecpi_h^i - \vecpi^*\|_2 \|\widehat{\vecr}_h^i - \widetilde{\vecr}_h^i\|_2 | \overline{E_h^i}, \mathcal{F}_h\right] \mathbb{P}(\overline{E_h^i}) \\
    \leq & 8\eta_h K^{\sfrac{3}{2}} \sqrt{1 + \sigma^2} \mathbb{P}(\overline{E_h^i}) 
\end{align*}
and $\mathbb{P}(\overline{E_h^i}) \leq \exp\{-\varepsilon T_h\}$.
Furthermore,
\begin{align*}
    \Exop[&\|\widetilde{\vecr}_h^i - \vecF(\bar{\vecmu}_h)\|_2^2 | \mathcal{F}_{h}] \\
    \leq & 2\Exop\left[ \|\widetilde{\vecr}_h^i - \vecF(\widehat{\vecmu}(\bar{a}_{h,exp}^i, \bar{a}_h ^ {-i})) \|_2^2\middle|\mathcal{F}_{h}\right] + 2\Exop\left[ \|\vecF(\widehat{\vecmu}(\bar{a}_{h,exp}^i, \bar{a}_h ^ {-i})) - \vecF(\bar{\vecmu}_h)\|_2^2 | \mathcal{F}_{h}\right] \\
    \leq &  \frac{8 L^2}{N}  +  2L^2 \Exop[ \|\widehat{\vecmu}(\bar{a}_{exp}^i, \bar{a} ^ {-i}) - \frac{1}{N}\sum_{j=1}^N \vecpi^j_h\|_2^2 | \mathcal{F}_{h}] \\
    \leq &  \frac{8 L^2}{N}  +  4L^2 \Exop[ \|\widehat{\vecmu}(\bar{a}_{exp}^i, \bar{a} ^ {-i}) - \frac{1}{N}\sum_{j\neq i} (\varepsilon\vecpi_{unif} + (1-\varepsilon)\vecpi^j_h) - \frac{\vecpi_{unif}}{N} \|_2^2 | \mathcal{F}_{h}] \\
        &+ 4L^2 \Exop[ \| \varepsilon\frac{1}{N}\sum_{j\neq i} ( \varepsilon\vecpi^j_h - \vecpi_{unif}) +  \frac{\vecpi^i_h - \vecpi_{unif}}{N} \|_2^2 | \mathcal{F}_{h}] \\
    \leq &  \frac{8 L^2}{N}  +  \frac{8L^2}{N} + 4L^2 (2\varepsilon^2 + \frac{4}{N}) \\
    \leq & \frac{64 L ^2}{N} + 8 L^2 \varepsilon^2,
\end{align*}
and finally, using the trivial Lipschitz continuity property: $\|\vecF(\bar{\vecmu}_h) - \vecF(\vecpi_h^i)\|_2^2 \leq L^2 \|\bar{\vecmu}_h - \vecpi_h^i\|_2^2 = L^2 e_h^i$.

\subsection{Proof of Lemma~\ref{lemma:bandit_pol_deviation}}\label{sec:proof_lemma_bandit_pol_dev}

We will reuse the definitions of Section~\ref{sec:bandit_extended_defs}.
Once again, repeating the derivations from Lemma~\ref{lemma:policy_variations_bound_trpa_full}, we have that
\begin{align*}
    \| \vecpi^i_{h+1} - \vecpi^j_{h+1} \|_2^2 
    = & (1 - \tau \eta_h)^2 \| \vecpi^i_h - \vecpi^j_h \|_2^2 + \eta_h^2 \|  \widehat{\vecr}_h^i - \widehat{\vecr}_h^j \|_2^2 + 2 (1 - \tau \eta_h) \eta_h (\vecpi^i_h - \vecpi^j_h) ^ \top ( \widehat{\vecr}_h^i - \widehat{\vecr}_h^j ).
\end{align*}
Unlike in the expert feedback case, the last term does not vanish in expectation when bounding policy deviation.
\begin{align*}
    \Exop \left[ \| \vecpi^i_{h+1} - \vecpi^j_{h+1} \|_2^2 | \mathcal{F}_h \right] \leq & (1 - \tau \eta_h)^2 \| \vecpi^i_h - \vecpi^j_h \|_2^2 + \Exop \left[ \eta_h^2 \|  \widehat{\vecr}_h^i - \widehat{\vecr}_h^j \|_2^2 | \mathcal{F}_h \right] \\
        &+ 2 (1 - \tau \eta_h) \eta_h (\vecpi^i_h - \vecpi^j_h) ^ \top \Exop\left[\widehat{\vecr}_h^i - \widehat{\vecr}_h^j | \mathcal{F}_h \right] \\
    \leq & (1 - \tau \eta_h)^2 \| \vecpi^i_h - \vecpi^j_h \|_2^2 + \Exop \left[ \eta_h^2 \|  \widehat{\vecr}_h^i - \widehat{\vecr}_h^j \|_2^2 | \mathcal{F}_h \right] \\
        &+ 2 \eta_h (\vecpi^i_h - \vecpi^j_h) ^ \top \left[ \Exop\left[\widehat{\vecr}_h^i \middle|E_{h}^i, \mathcal{F}_h\right] + \vecb_h^i - \Exop\left[\widehat{\vecr}_h^j \middle|E_{h}^j, \mathcal{F}_h\right] - \vecb_h^j \right]  \\
    \leq & (1 - \tau \eta_h)^2 \| \vecpi^i_h - \vecpi^j_h \|_2^2 + \Exop \left[ \eta_h^2 \|  \widehat{\vecr}_h^i - \widehat{\vecr}_h^j \|_2^2 | \mathcal{F}_h \right] \\
        &+ 2 \eta_h (\vecpi^i_h - \vecpi^j_h) ^ \top \left[ \Exop\left[\widehat{\vecr}_h^i \middle|E_{h}^i, \mathcal{F}_h\right] - \Exop\left[\widehat{\vecr}_h^j \middle|E_{h}^j, \mathcal{F}_h\right] \right] + 8 \eta_h \exp\left\{ -\varepsilon T_{h}\right\},
\end{align*}
where the last line follows from the bound on $\vecb_h^i$ in Lemma~\ref{lemma:exploration_bias_trpa_bandit}.
Furthermore, using Young's inequality, we obtain
\begin{align*}
    \Exop \left[ \| \vecpi^i_{h+1} - \vecpi^j_{h+1} \|_2^2 | \mathcal{F}_h \right] \leq & (1 - \tau \eta_h)^2 \| \vecpi^i_h - \vecpi^j_h \|_2^2 + \Exop \left[ \eta_h^2 \|  \widehat{\vecr}_h^i - \widehat{\vecr}_h^j \|_2^2 | \mathcal{F}_h \right] + 8 \eta_h \exp\left\{ -\varepsilon T_{h}\right\} \\
        &+ \frac{\tau\eta_h}{2} \|\vecpi^i_h - \vecpi^j_h\|_2^2 + \frac{ \eta_h\tau^{-1}}{2}\left\| \Exop\left[\widehat{\vecr}_h^i \middle|E_{h}^i, \mathcal{F}_h\right] - \Exop\left[\widehat{\vecr}_h^j \middle|E_{h}^j, \mathcal{F}_h\right] \right\|_2^2 \\
     \leq & (1 - \tau \eta_h)^2 \| \vecpi^i_h - \vecpi^j_h \|_2^2 + \Exop \left[ \eta_h^2 \|  \widehat{\vecr}_h^i - \widehat{\vecr}_h^j \|_2^2 | \mathcal{F}_h \right] + 8 \eta_h \exp\left\{ -\varepsilon T_{h}\right\} \\
        &+ \frac{\tau\eta_h}{2} \|\vecpi^i_h - \vecpi^j_h\|_2^2 + \frac{2\eta_h\tau^{-1} L ^ 2}{N^2}.
\end{align*}
With the choice of $T_h = \varepsilon^{-1} \log (h+2)$ and noting that $\| \vecpi^i_{h} - \vecpi_h^j\|_2 \leq 2$, we obtain the recurrence
\begin{align*}
    \Exop \left[ \| \vecpi^i_{h+1} - \vecpi^j_{h+1} \|^2_2 \right] \leq & \left(1 - \frac{\sfrac{3}{2}}{h+2}\right) \Exop \left[ \| \vecpi^i_h - \vecpi^j_h \|_2^2\right] + \frac{4\tau^{-2} K^3 (\sigma^2 + 1) + 8\tau^{-1} + 4}{(h+2)^2}   \\
        & + \frac{4\tau^{-2} L ^ 2}{N^2 (h+2)}.
\end{align*}
Hence, by invoking the recurrence lemma (Lemma~\ref{lemma:general_recurrence}, with $a=2, c_0 = \frac{4\tau^{-2} L ^ 2}{N^2}, c_1 = 4\tau^{-2} K^3 (\sigma^2 + 1) + 8\tau^{-1} + 4, \gamma = \sfrac{3}{2}, u_0 = 0$), we have
\begin{align*}
    \Exop \left[ \| \vecpi^i_{h+1} - \vecpi^j_{h+1} \|_2^2 \right] \leq & \frac{24\tau^{-2} K^3 (\sigma^2 + 1) + 48\tau^{-2} + 24}{h+2} + \frac{16\tau^{-2} L ^ 2}{N^2}.
\end{align*}
The statement in the lemma follows as in the full feedback case.

\subsection{Proof of Theorem~\ref{theorem:bandit_short}}\label{sec:bandit_theorem_full}

Using Lemma~\ref{lemma:bandit_main_recurrence}, for the strongly monotone case $\lambda > 0$ we have that
\begin{align*}
    u_{h+1}^i \leq & 4 \eta_h^2 K^3(1 + \sigma^2) + 8\eta_h^2(L+\tau)^2 + 8 K^{\sfrac{3}{2}}\eta_h \sqrt{1+\sigma^2}  \exp\{-\varepsilon T_h\} \\
        &+128\eta_h\lambda^{-1} L^2 N^{-1} + 16\eta_h\lambda^{-1}L^2\varepsilon^2 + 2\eta_h\lambda^{-1} L^2 \Exop\left[e_h^i\right] \\
        &+\left(1 - 2 \eta_h(\sfrac{\lambda}{2} + \tau)\right) u_{h}^i,
\end{align*}
and for the monotone case it holds that 
\begin{align*}
    u_{h+1}^i \leq &  4\eta_h^2 K^3(\sigma^2 + 1) + 8 \eta_h^2 (L+\tau)^2 + 8 K^{\sfrac{3}{2}} \eta_h \sqrt{1+\sigma^2} \exp\{-\varepsilon T_h\} \\
    &+64\tau^{-1} \eta_h \delta^{-1}L^2 N^{-1}+8\tau^{-1} \eta_h \delta^{-1}L^2 \varepsilon^{2} + \tau^{-1} \eta_h\delta^{-1}L^2 \Exop\left[e_h^i\right] \\  
        &+ \left(1 - 2 \tau \eta_h (1-\delta)\right) u_{h}^i.
\end{align*}
By Lemma~\ref{lemma:bandit_pol_deviation}, we know that
\begin{align*}
    \Exop[e_h^i] \leq \frac{12\tau^{-2} K^3 (\sigma^2 + 1) + 48\tau^{-2} + 24}{h+2} + \frac{16\tau^{-2} L ^ 2}{N^2}.
\end{align*}
Placing this bound as well as $T_h = \lceil \varepsilon^{-1} \log (h+2) \rceil$ and $\eta_h = \frac{\tau^{-1}}{h+2}$, we obtain the recurrences which are solved by using Lemma~\ref{lemma:general_recurrence}.
In the monotone case, we pick $\delta=\sfrac{1}{4}$ as before.

The bound in the statement of the theorem in the main body of the paper follows from the fact that the lengths of the exploration epochs scale with $T_h = \mathcal{O}(\varepsilon^{-1} \log (h+2)) = \widetilde{\mathcal{O}}(\varepsilon^{-1})$.

\section{Details of Experiments}\label{sec:experiments_detailed}

\textbf{Setup.}
All experiments were run on single core of an AMD EPYC 7742 CPU, a single experiment with 1000 independent agents and 100000 iterations takes roughly 1 hour.
For all experiments, we use parameters $\tau, \epsilon$ as implied by Corollaries~\ref{corollary:expert},\ref{corollary:bandit}.
Projections to the probability simplex were implemented using the algorithm by \cite{duchi2008efficient}.

\subsection{Problem Generation Details}

We provide further details on how we generate/simulate the SMFG problems.

\textbf{Linear payoffs.}
We generate a payoff map
\begin{align*}
    \vecF_{lin}(\vecmu) := (\matS + \matX) \vecmu + \vecb
\end{align*}
for some $\matS \in \mathbb{S}_{++}^{K\times K}$ and $\matX$ anti-symmetric matrix, which makes $ \vecF_{lin}$ monotone.
We randomly sample $\matS$ from a Wishart distribution (which has support contained in positive definite matrices), generate $\matX$ by computing $\frac{\matU - \matU^\top}{2}$ for a random matrix $\matU$ with entries sampled uniformly from $[0,1]$ and $\vecb$ having entries uniformly sampled from $[0,1]$.

\textbf{Payoffs with KL potential.}
Next, we construct the following payoff operator $\vecF_{KL}$ for some reference distribution $\vecmu_{\text{ref}} \in \Delta_\setA$:
\begin{align*}
    \Phi_{KL}(\vecmu) &:= \operatorname{D}_{KL}(\gamma\vecmu + (1-\gamma)\vecmu_{\text{ref}}||\vecmu_{\text{ref}}) \\
    \vecF_{KL}(\vecmu) &:= \nabla \Phi_{KL} (\vecmu) \\
        \vecF_{KL}(\vecmu, a) &= \gamma \log\left(\frac{\gamma \vecmu(a) + (1-\gamma) \vecmu_{\text{ref}}(a)}{\vecmu_{\text{ref}}(a)}\right) + \gamma
\end{align*}
Note that as $\Phi_{KL}$ is convex, $\vecF_{KL}$ is monotone.
In our experiments, we use $\gamma = 0.1$, and we generate $\vecmu_{\text{ref}}$ by sampling $K$ uniform random variables in $[0,1]$ and normalizing.

\textbf{Beach bar process.}
Following the example given in \cite{perrin2020fictitious}, we use the action set $\setA = \{1, \ldots, K \}$ for potential locations at the beach and assume a bar is located at $x_{bar} := \lfloor\frac{K}{2} \rfloor$.
Taking into the proximity to the bar and the occupancy measure over actions (i.e., the crowdedness of locations at the beach), the payoff map is given by:
\begin{align*}
    \vecF_{bb}(\vecmu, a) = 1-\frac{|a - x_{bar}|}{K} - \alpha \log(1 + \vecmu(a)).
\end{align*}
The above payoff map is monotone.
We use $\alpha = 1$ for our experiments.

\subsection{Learning curves - Full Feedback}

We provide the learning curves under full feedback for various choices of the number of agents $N \in \{ 20, 100, 1000 \}$.
The errors in terms of maximum exploitability and distance to MF-NE are presented in Figure~\ref{figure:expert_exp_curves} and Figure~\ref{figure:expert_l2_curves} respectively.

\begin{figure}[h]
\centering
\begin{tabular}{cc}
  \includegraphics[width=0.35\linewidth]{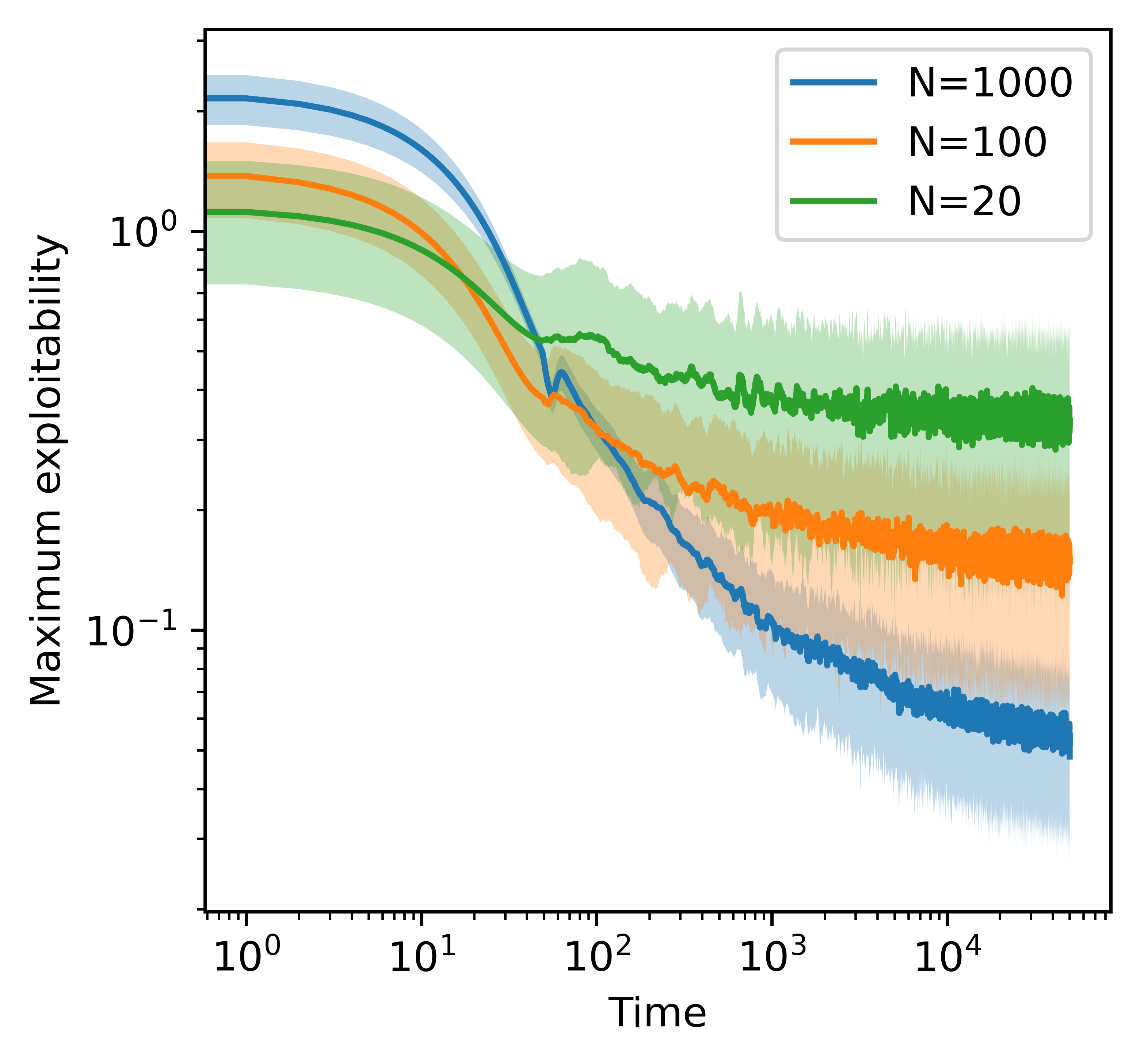} &   \includegraphics[width=0.35\linewidth]{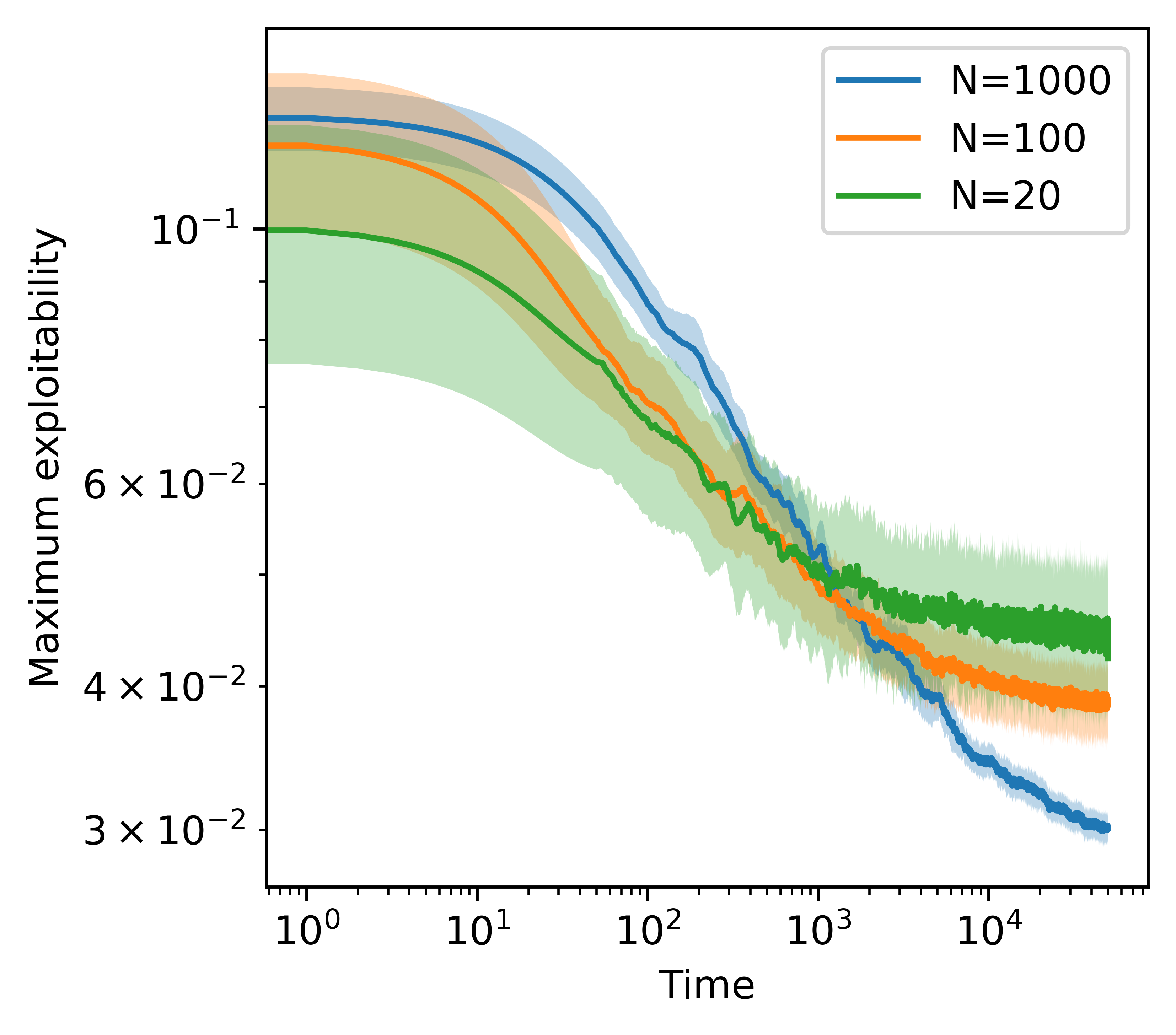} \\
(a) & (b) \\
\includegraphics[width=0.35\linewidth]{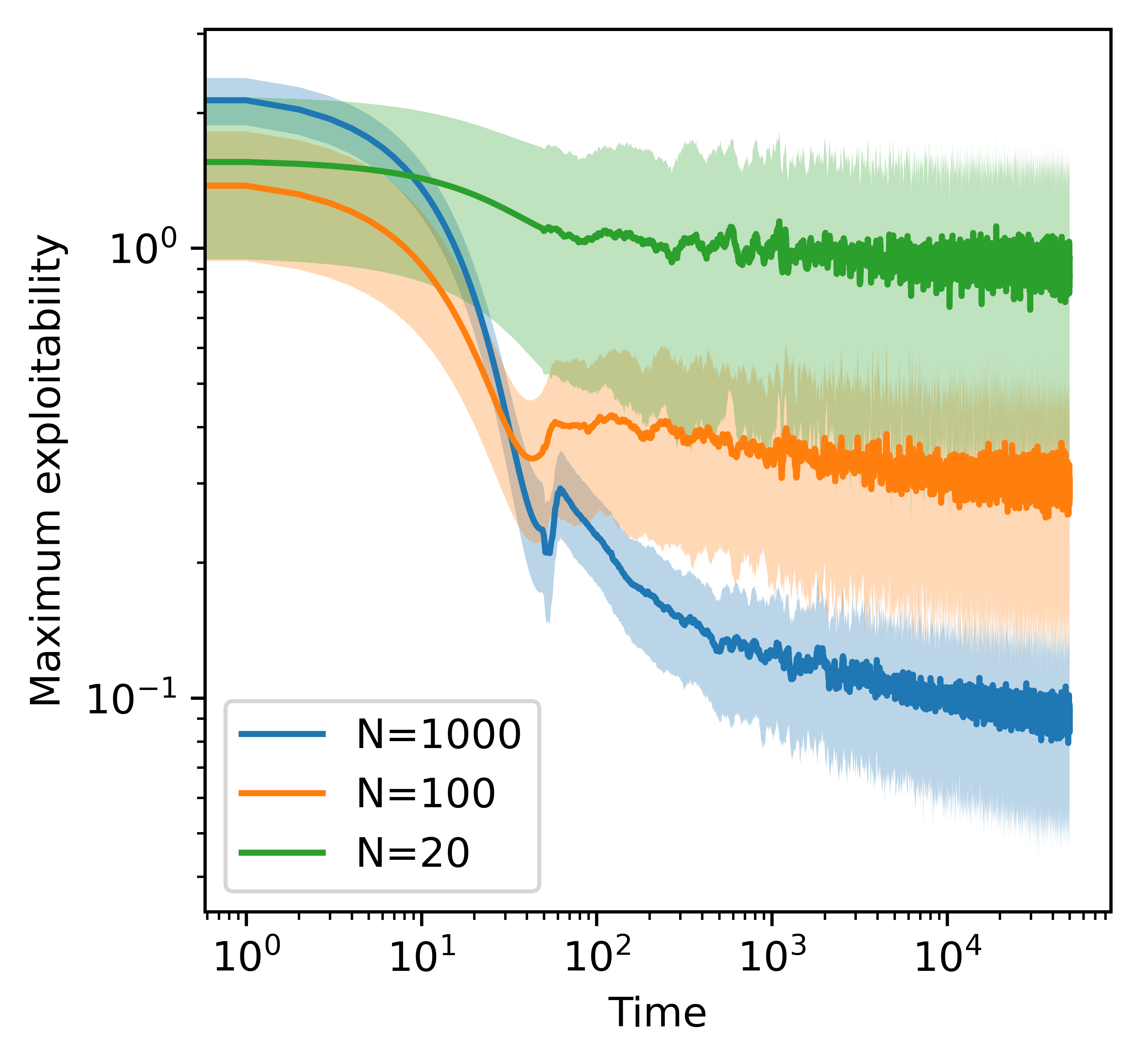} &   \includegraphics[width=0.35\linewidth]{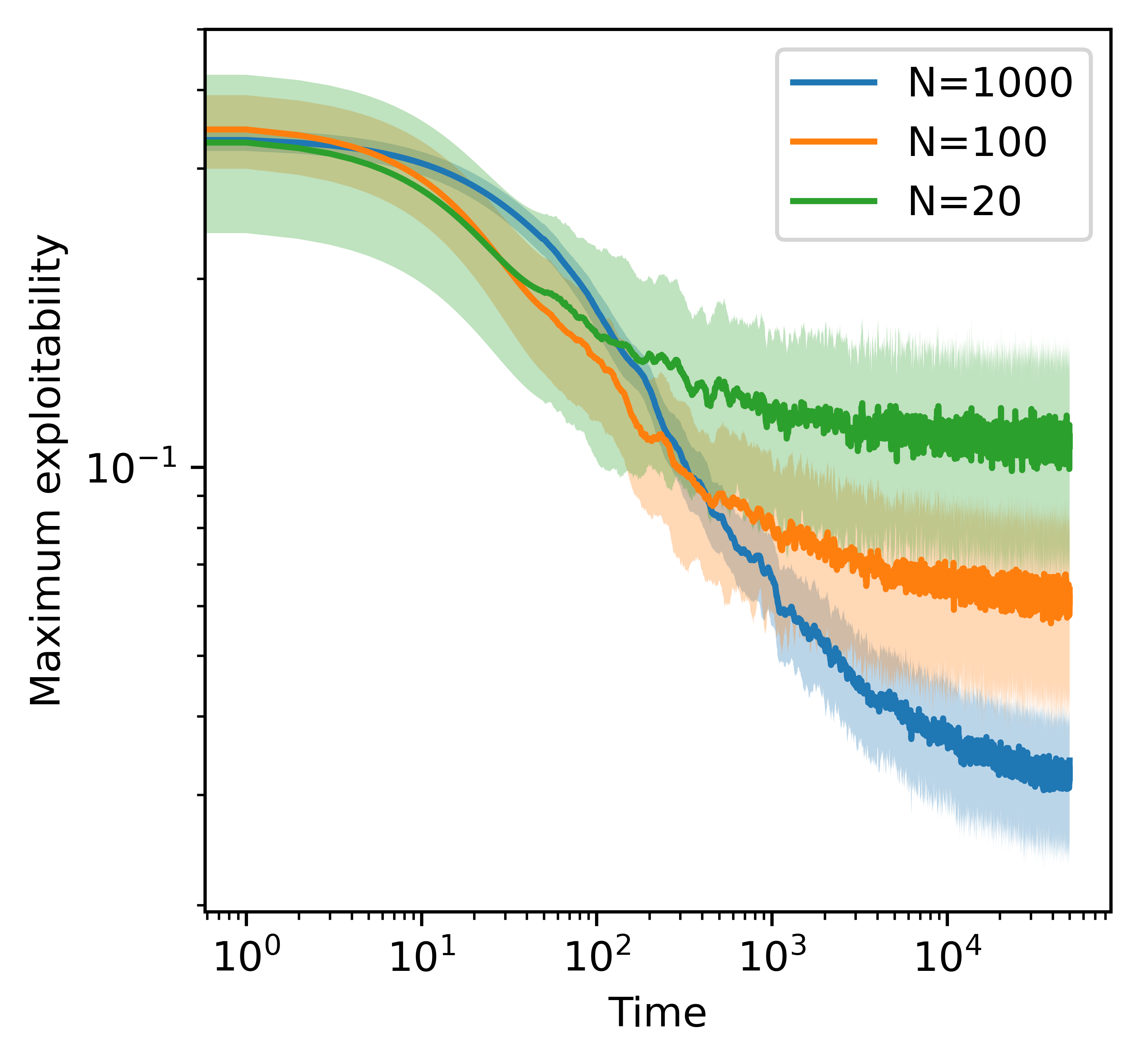} \\
(c) & (d)
\end{tabular}
\caption{
The (smoothed) maximum exploitability $\max_{i\in\setN} \phi^i(\{\vecpi^j\}_{j=1}^N)$ among $N$ agents throughout learning with full feedback for three different $N$, on the problems (a) linear payoffs, (b) exponentially decreasing payoffs, (c) payoffs with KL potential and (d) the beach bar payoffs.
}
\label{figure:expert_exp_curves}
\end{figure}

As expected, the games with larger number of players $N$ converge to better approximate NE in the sense that the final maximum exploitability is smaller at convergence.
Furthermore, in most cases the exploitability converges slightly slower with more agents, also supporting the theoretical finding that there is a dependence on $N$.
As before, the exploitability curves have oscillations at later stages of the training, even though they remain upper bounded as foreseen the theoretical results.
This does not contradict our results as long as for larger $N$, the upper bound on the oscillations us smaller.
The confidence intervals plotted in figures support a high-probability upper bound on the maximum exploitability as one would expect.

\begin{figure}[h]
\centering
\begin{tabular}{cc}
  \includegraphics[width=0.35\linewidth]{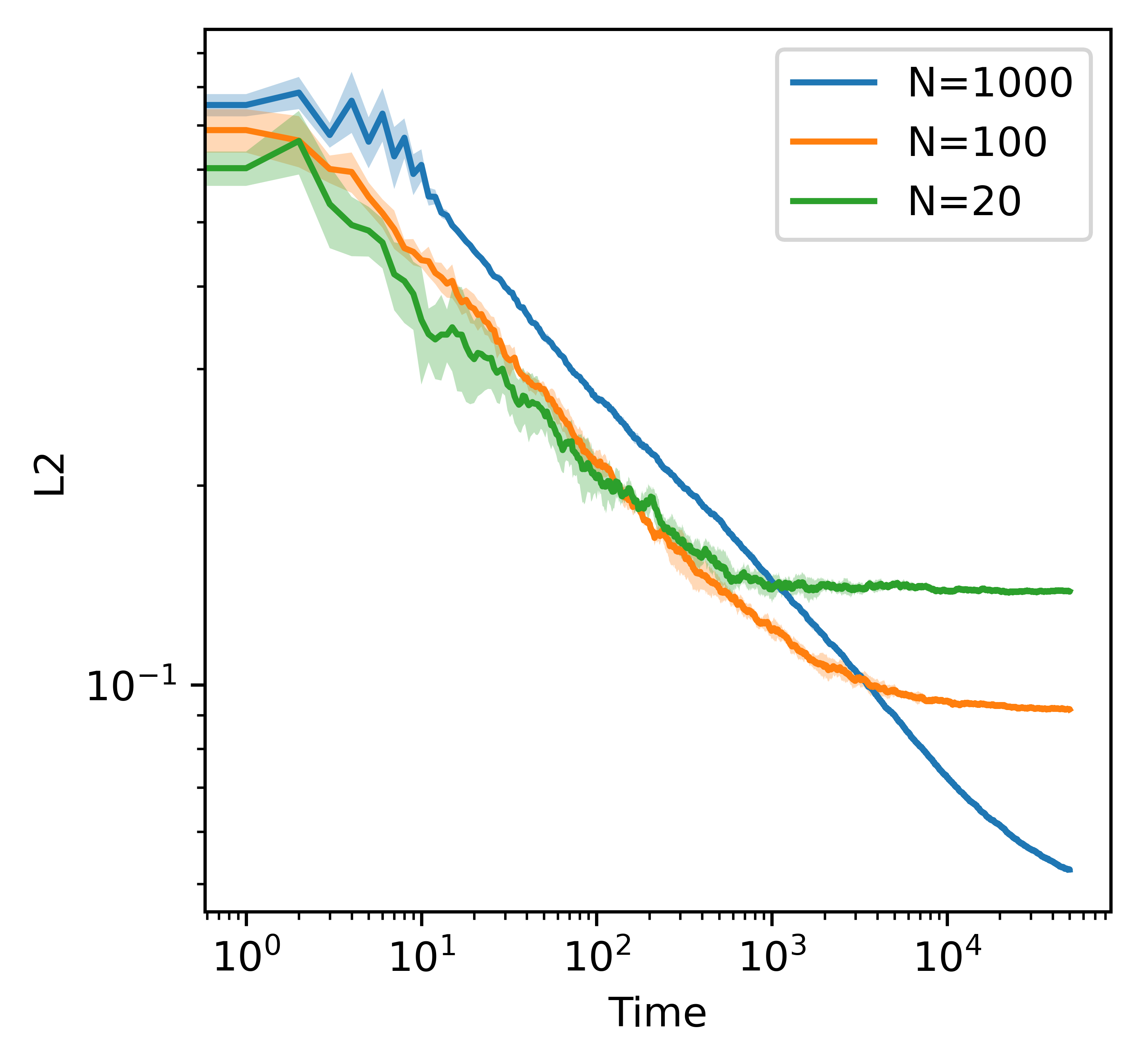} &   \includegraphics[width=0.35\linewidth]{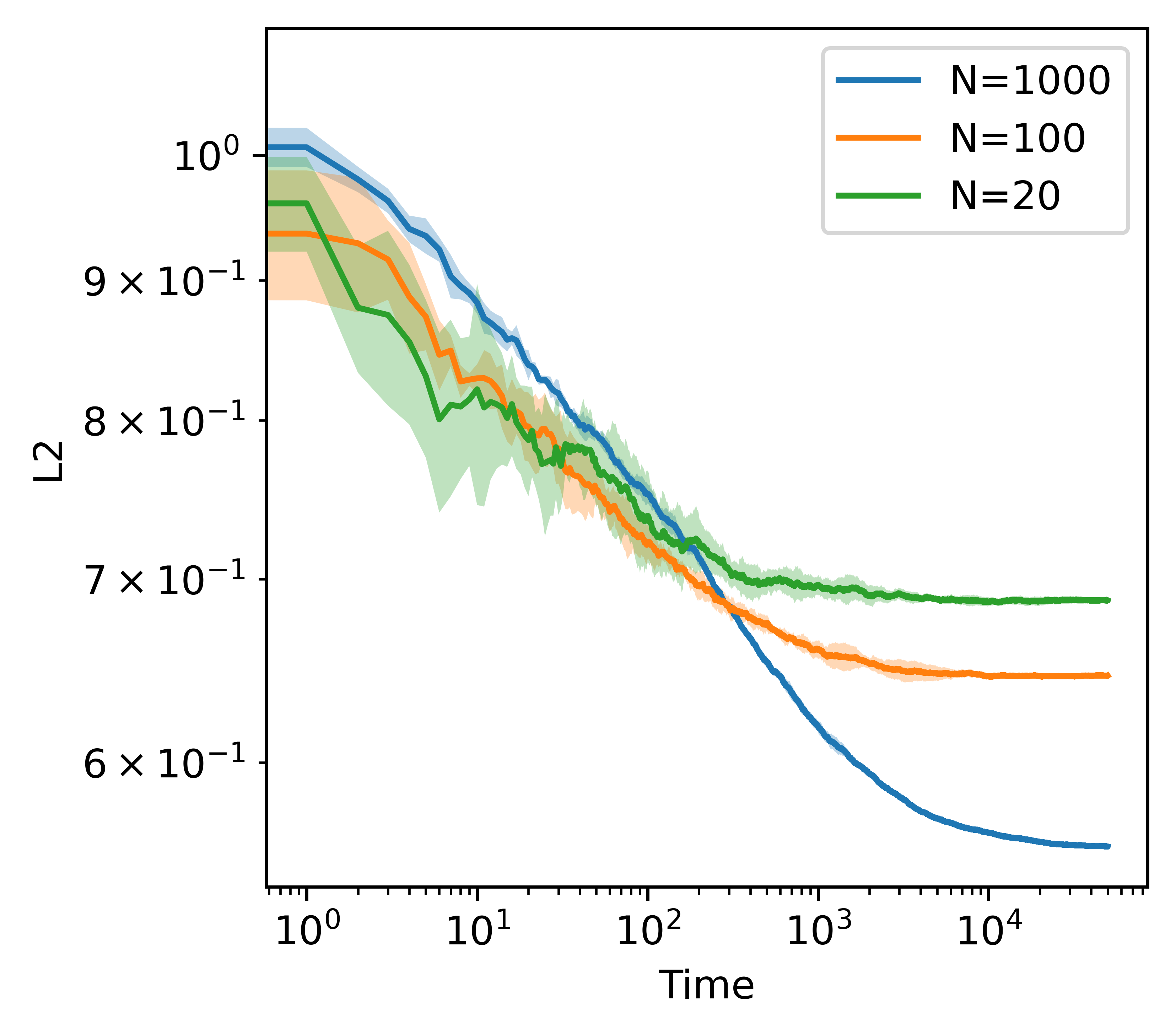} \\
(a) & (b) \\
\includegraphics[width=0.35\linewidth]{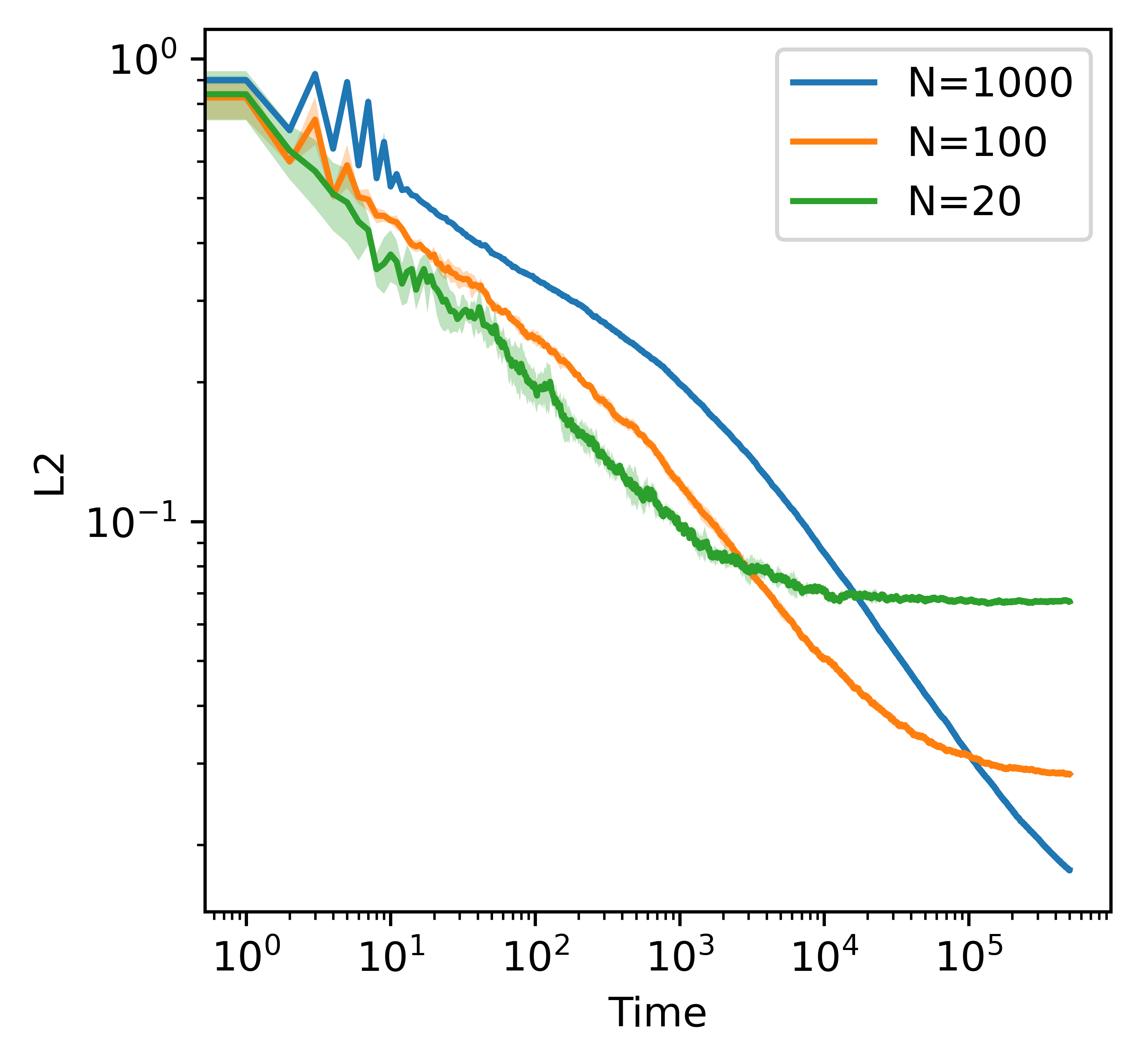} &   \includegraphics[width=0.35\linewidth]{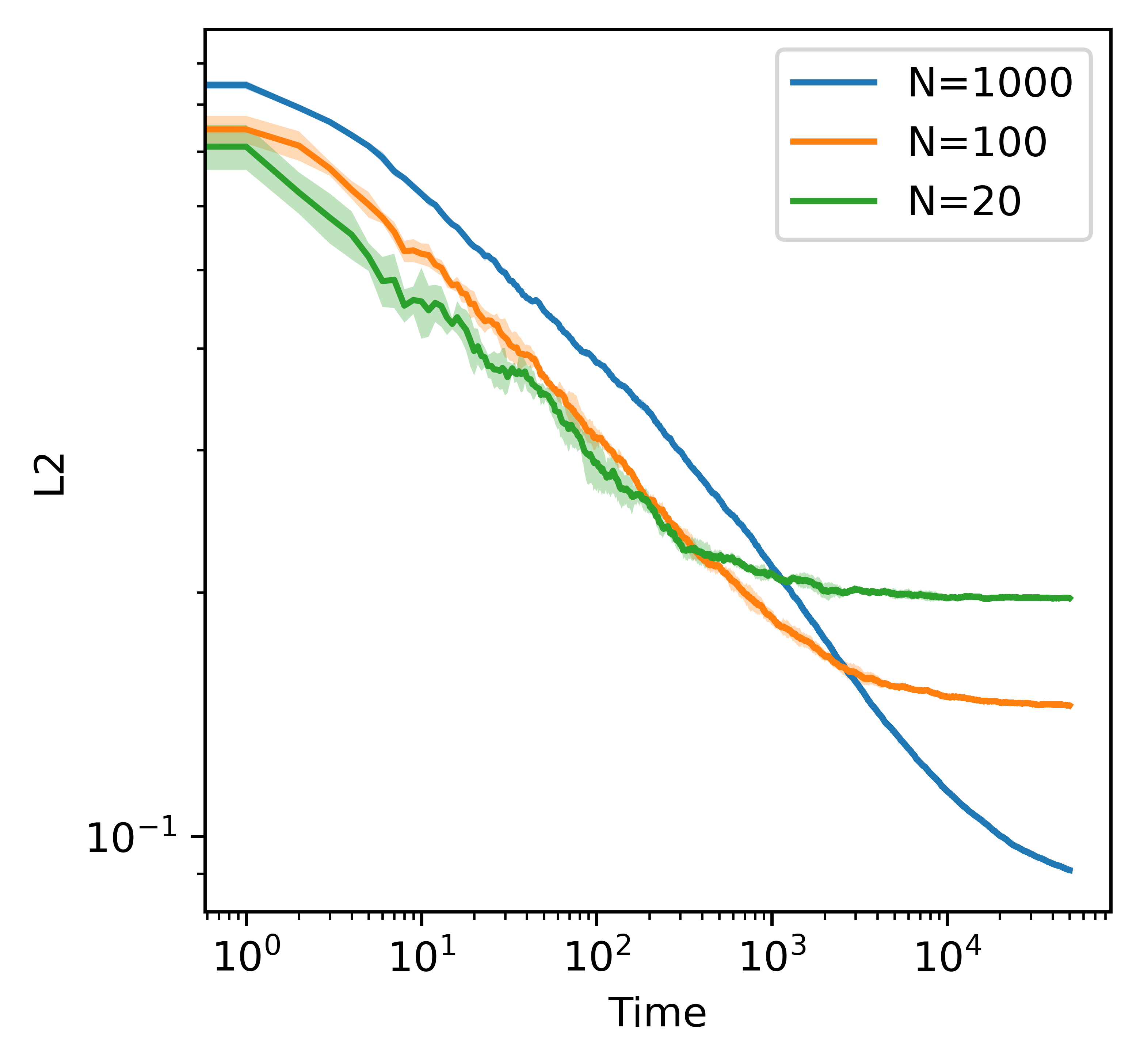} \\
(c) & (d)
\end{tabular}
\caption{
The mean $\ell_2$ distance to MF-NE given by $\frac{1}{N}\sum_{i\in\setN} \| \vecpi^i - \vecpi^*\|_2$ with $N$ agents throughout learning with full feedback for three different $N$, on the problems (a) linear payoffs, (b) exponentially decreasing payoffs, (c) payoffs with KL potential and (d) the beach bar payoffs.
}
\label{figure:expert_l2_curves}
\end{figure}

\subsection{Learning curves - Bandit Feedback}

We provide the learning curves under bandit feedback for various choices of the number of agents $N \in \{ 50, 100, 1000 \}$.
The errors in terms of maximum exploitability and distance to MF-NE are presented in Figure~\ref{figure:bandit_exp_curves} and Figure~\ref{figure:bandit_l2_curves} respectively.

\begin{figure}[h]
\centering
\begin{tabular}{cc}
  \includegraphics[width=0.35\linewidth]{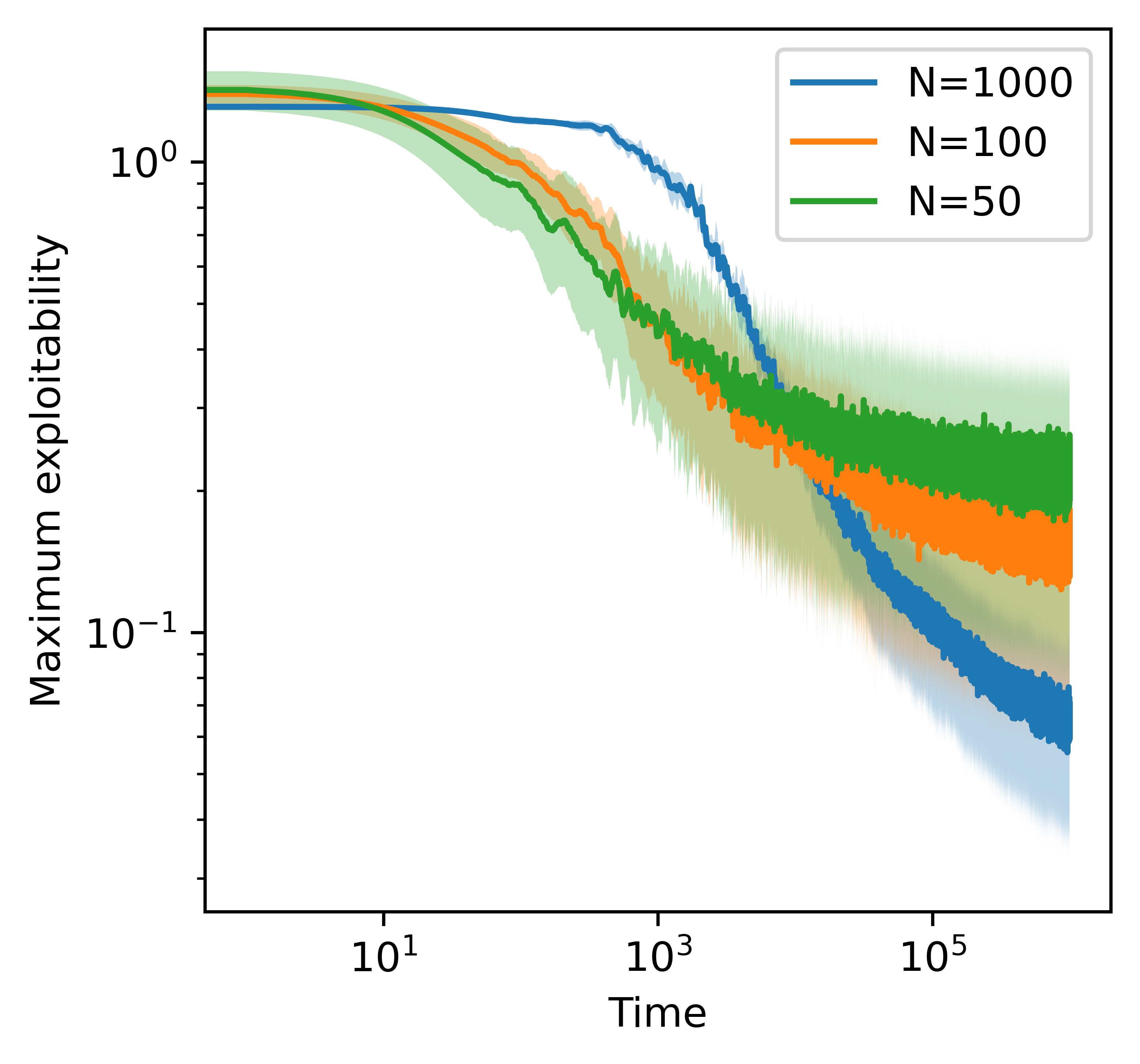} &   \includegraphics[width=0.35\linewidth]{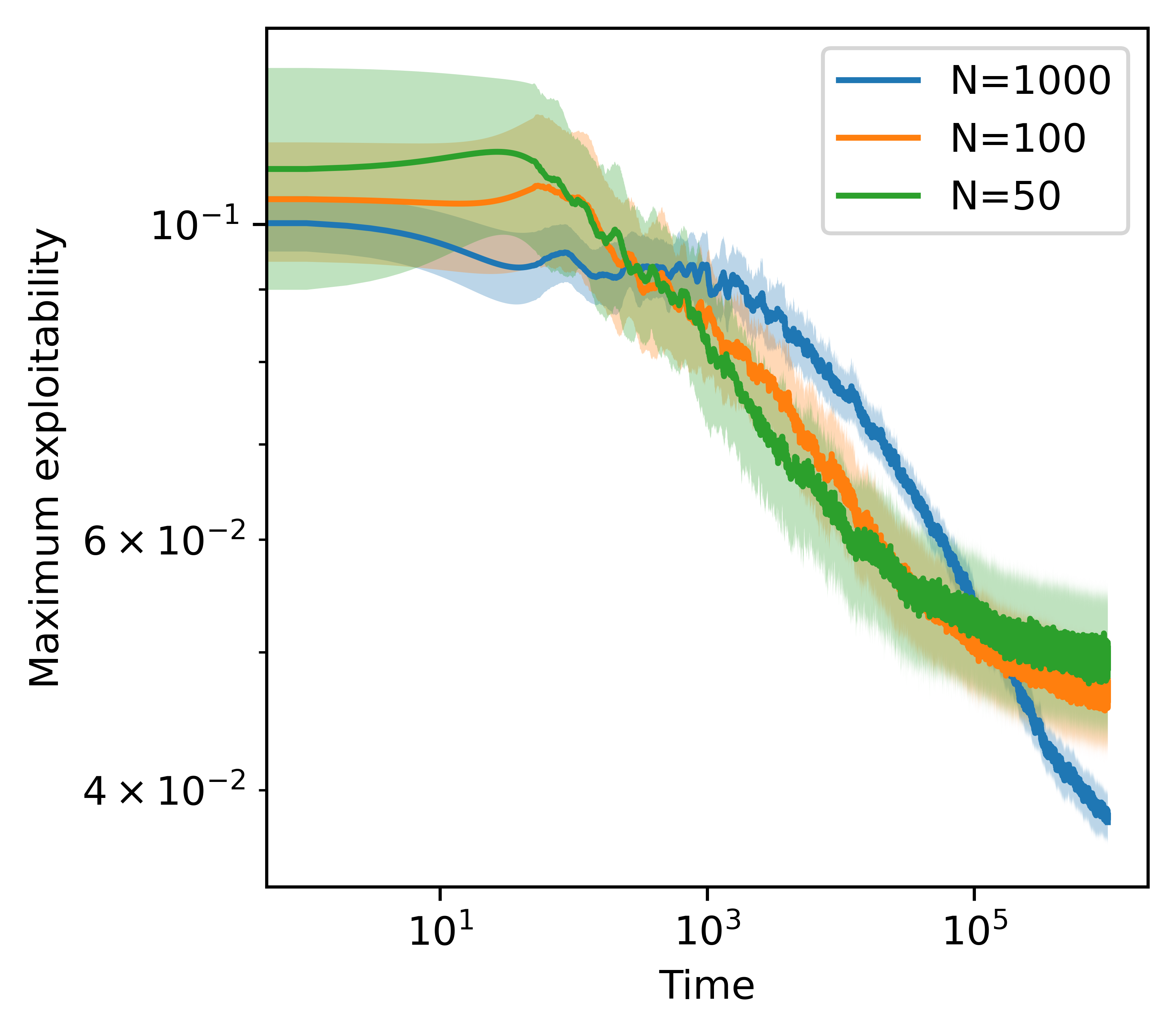} \\
(a) & (b) \\
\includegraphics[width=0.35\linewidth]{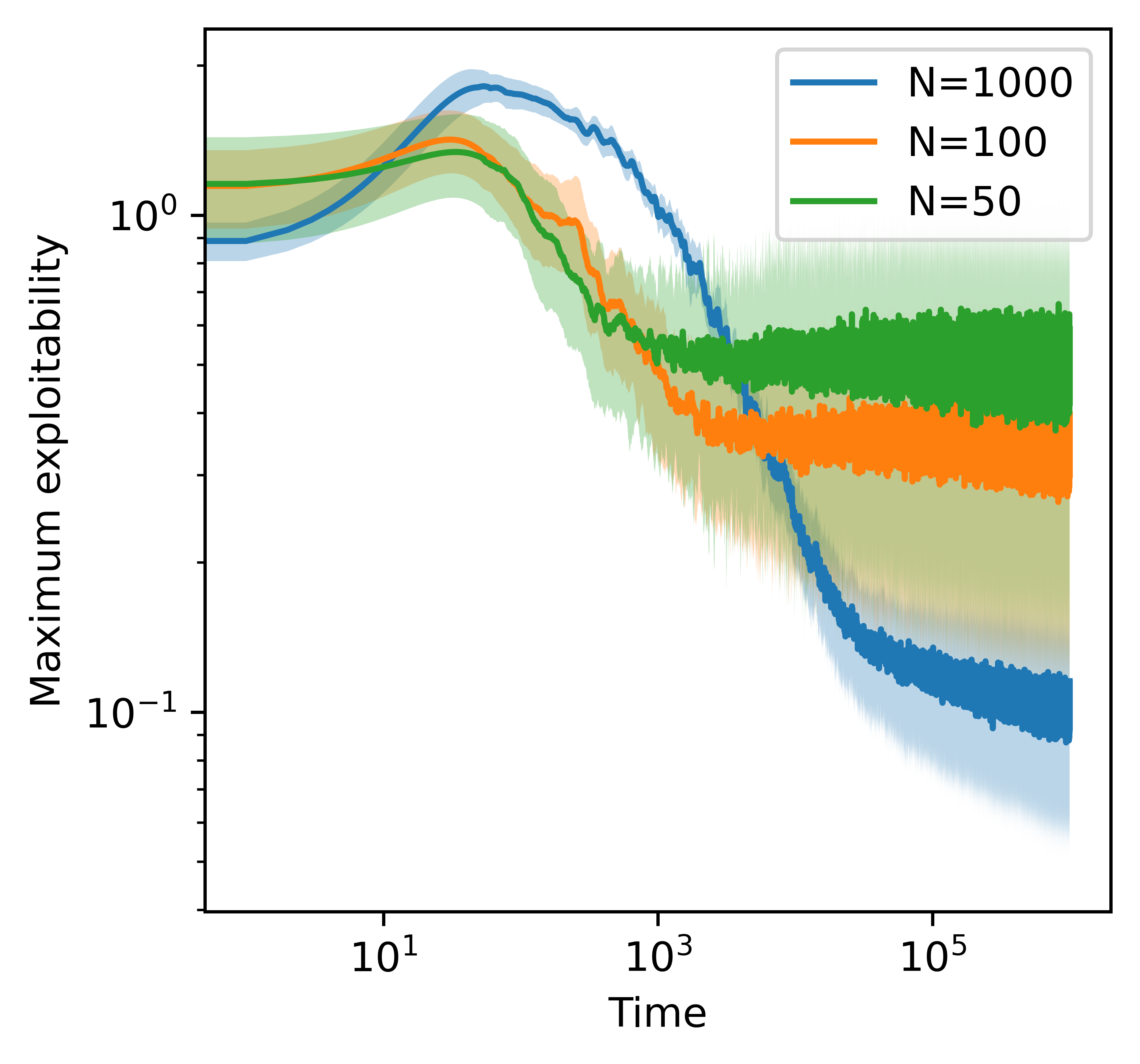} &   \includegraphics[width=0.35\linewidth]{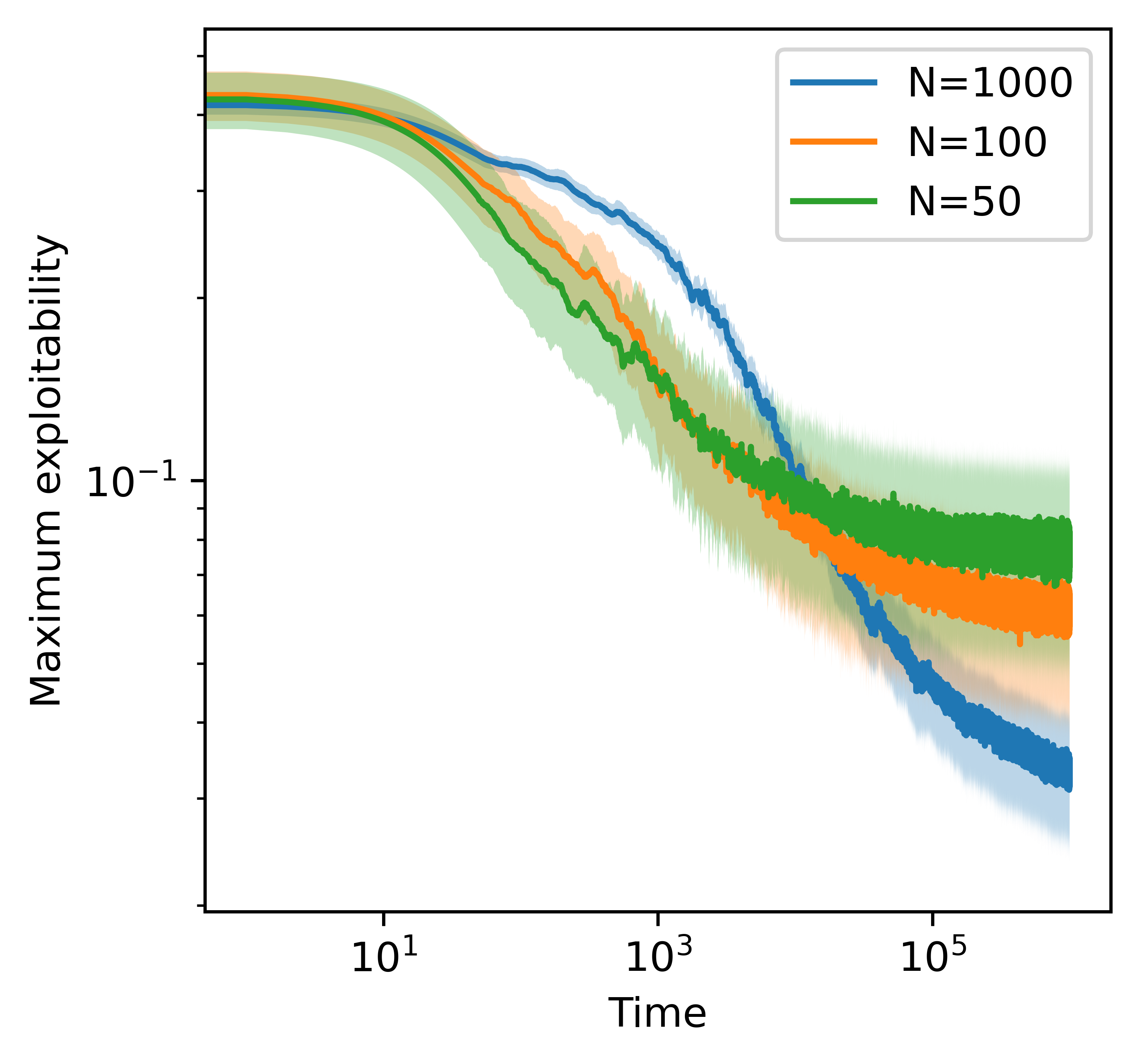} \\
(c) & (d)
\end{tabular}
\caption{
The (smoothed) maximum exploitability $\max_{i\in\setN} \phi^i(\{\vecpi^j\}_{j=1}^N)$ among $N$ agents throughout learning with bandit feedback for three different $N$, on the problems (a) linear payoffs, (b) exponentially decreasing payoffs, (c) payoffs with KL potential and (d) the beach bar payoffs.
}
\label{figure:bandit_exp_curves}
\end{figure}

As in the case of full feedback, the curves converge to smaller values as $N$ increases.
Furthermore, one straightforward observation is that the variance at early stages of learning is much higher than in the full feedback case.
This can be due to the added variance of the importance sampling estimator constructed through exploration epochs.
As exploration occurs in shorter duration at early epochs, the variance between agent policies will be high as well, explaining the initial increase in exploitability in certain toy experiments in Figure~\ref{figure:bandit_exp_curves}.

\begin{figure}[h]
\centering
\begin{tabular}{cc}
  \includegraphics[width=0.35\linewidth]{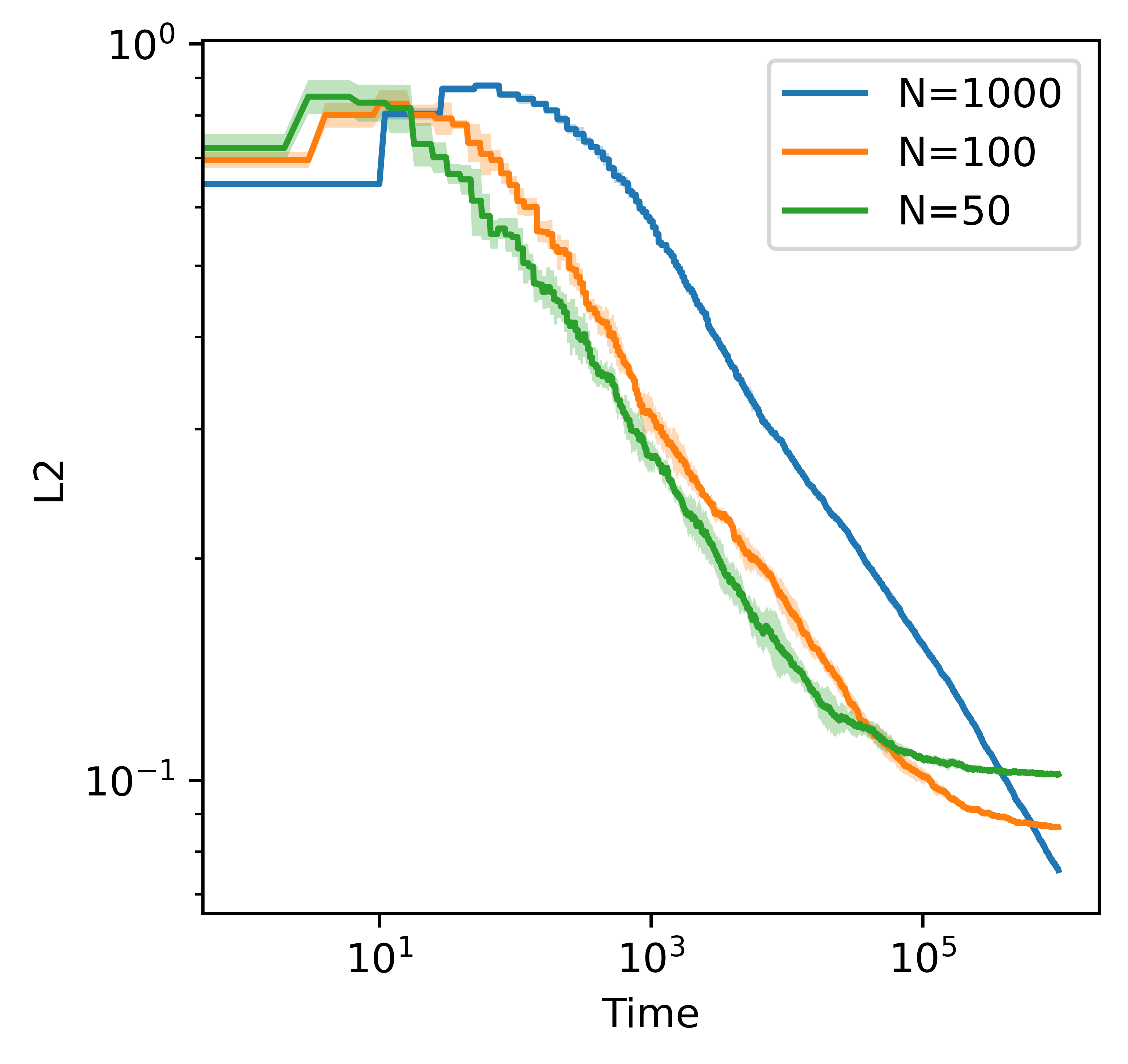} &   \includegraphics[width=0.35\linewidth]{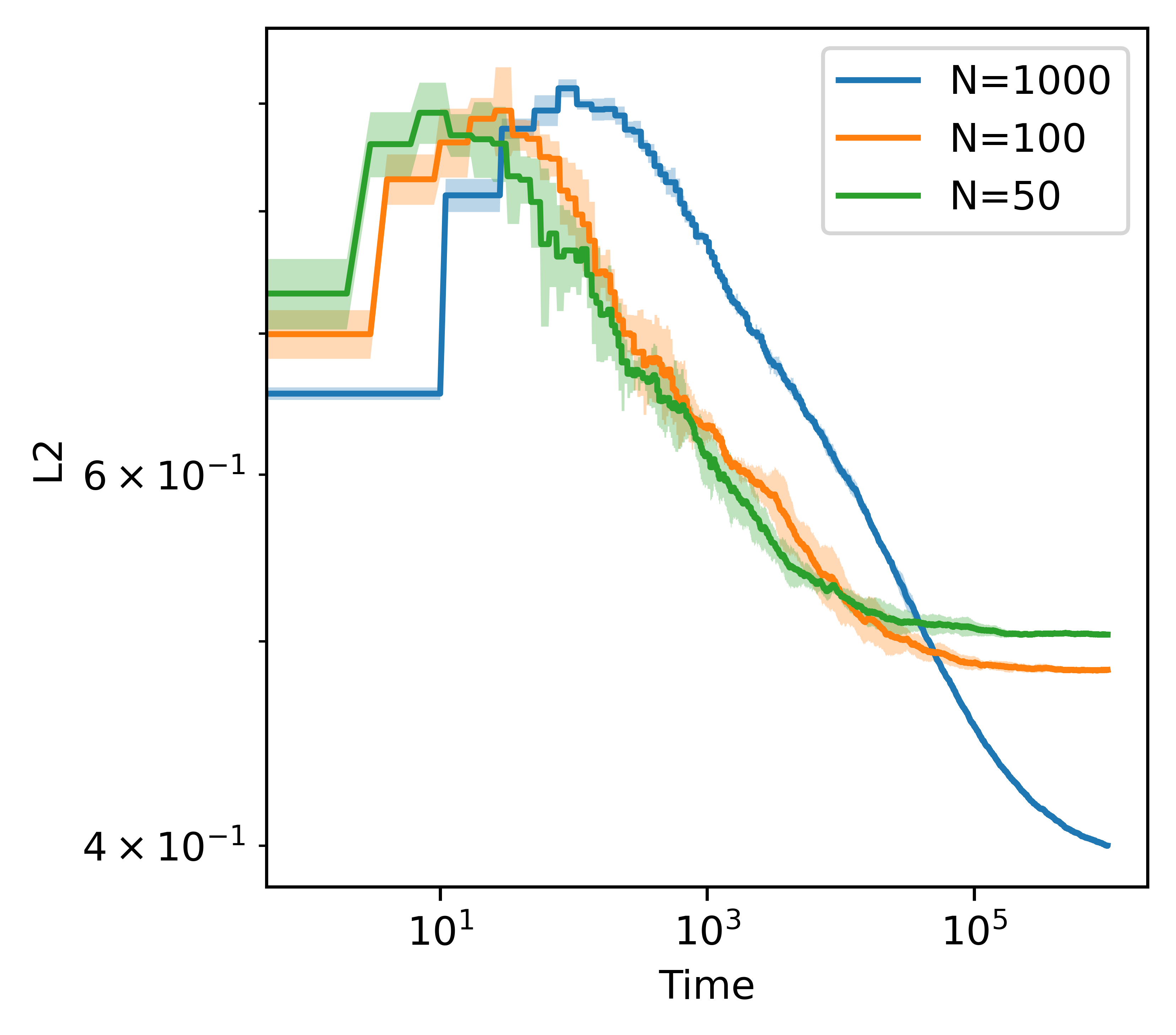} \\
(a) & (b) \\
\includegraphics[width=0.35\linewidth]{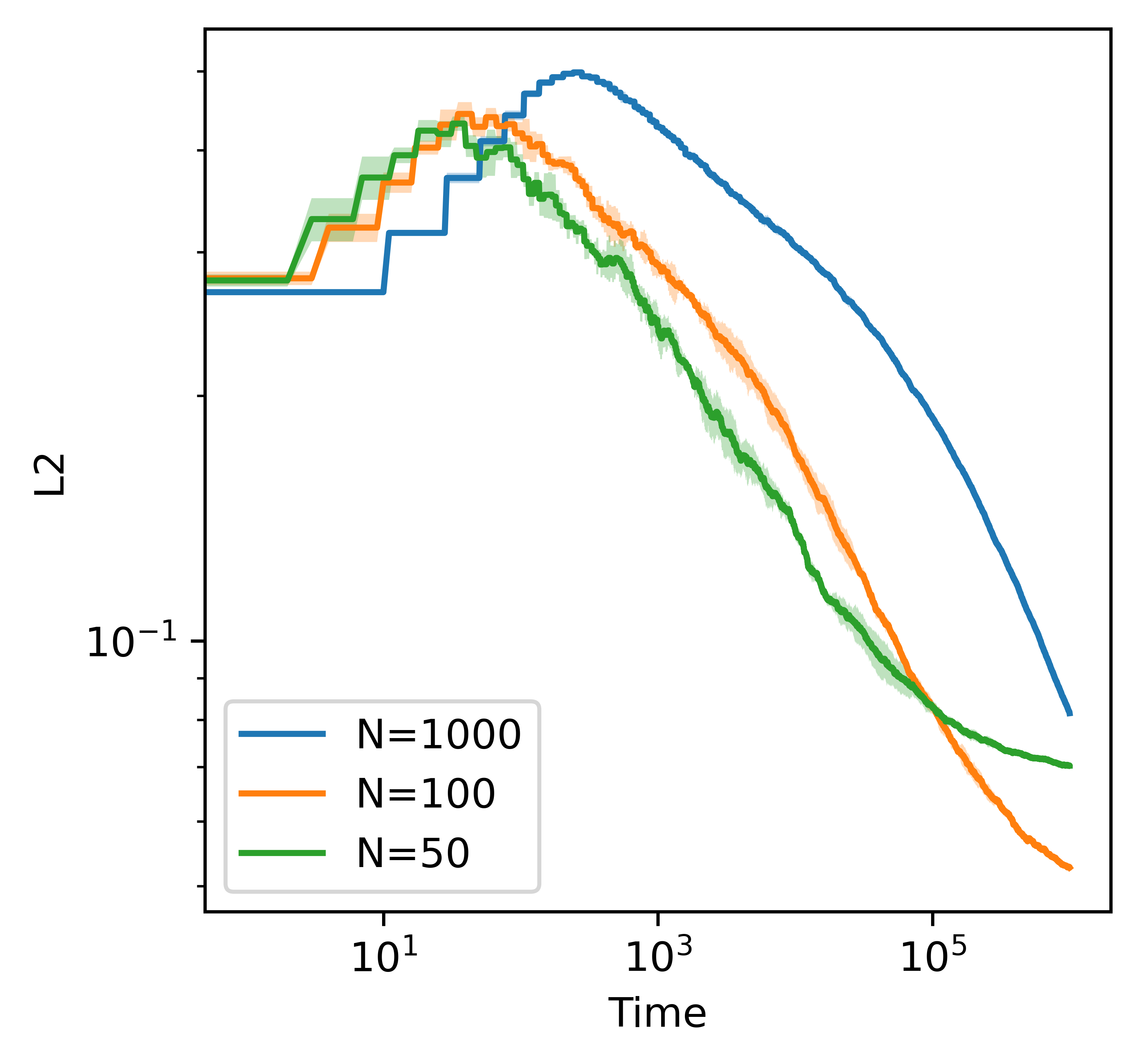} &   \includegraphics[width=0.35\linewidth]{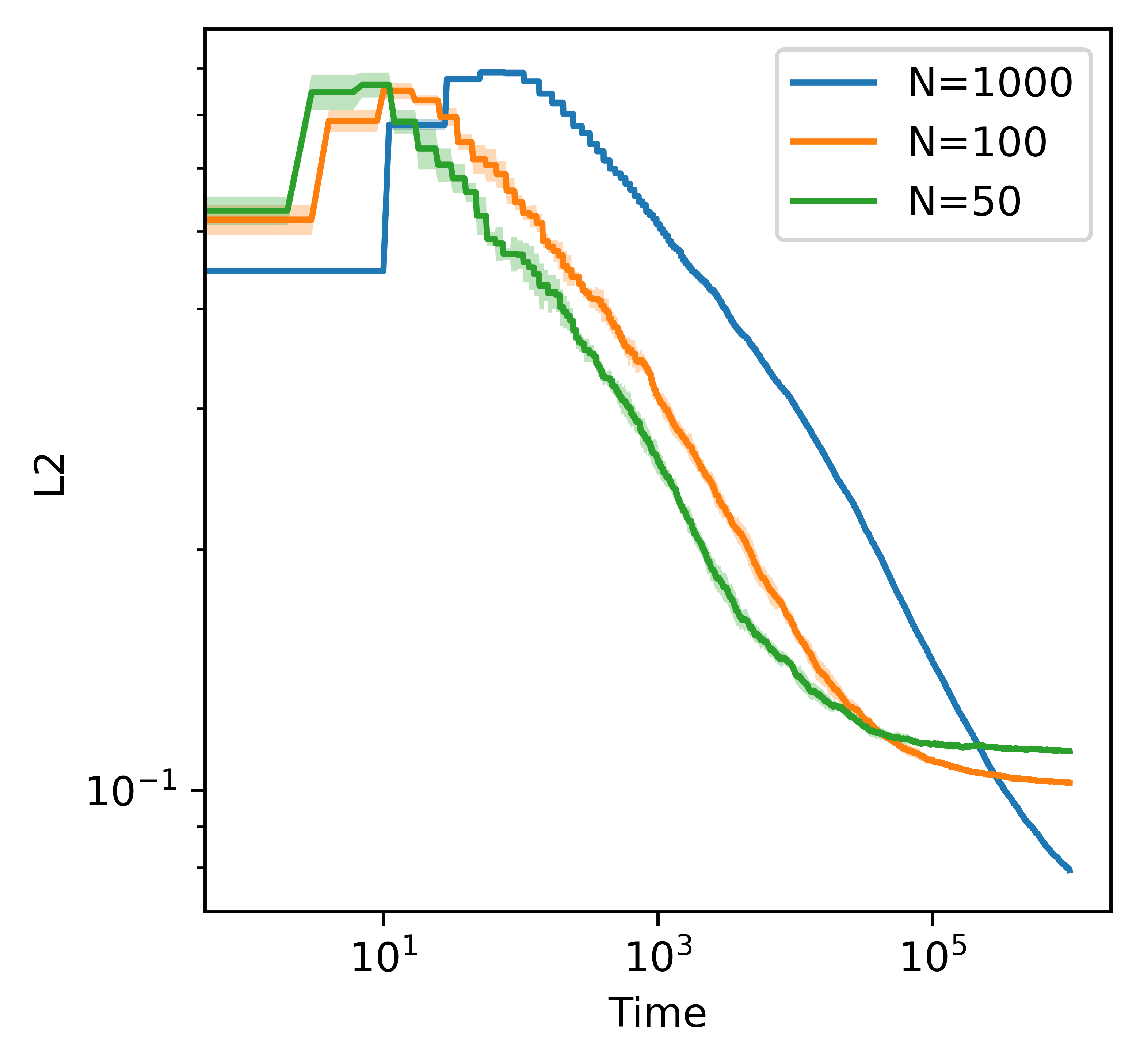} \\
(c) & (d)
\end{tabular}
\caption{
The mean $\ell_2$ distance to MF-NE given by $\frac{1}{N}\sum_{i\in\setN} \| \vecpi^i - \vecpi^*\|_2$ with $N$ agents throughout learning with bandit feedback for three different $N$, on the problems (a) linear payoffs, (b) exponentially decreasing payoffs, (c) payoffs with KL potential and (d) the beach bar payoffs.
}
\label{figure:bandit_l2_curves}
\end{figure}

Furthermore, comparing the observations for bandit feedback (Figure~\ref{figure:bandit_exp_curves}) and full feedback cases (Figure~\ref{figure:expert_exp_curves}), we empirically confirm that learning take more iterations in the bandit case.
This is likely due to the fact that exploration occurs probabilistically, inducing additional variance in the policy updates that increases with $N$ and incorporates an additional logarithmic term in the theoretical bounds.
However, the number of exploration epochs in the bandit case is comparable to the number of time steps in the full feedback case.

Finally, we also emphasize the fact that in earlier stages of training with bandit feedback, the cases with $N=1000$ had much higher exploitability and $\ell_2$ distance to MF-NE at earlier time steps.
This is due to the fact that policy updates occur with larger intervals in between when $N$ is large, as can be seen in Algorithm~\ref{alg:bandit}.
This can be explicitly observed in Figure~\ref{figure:bandit_l2_curves}, as the policies of agents are constant in between policy updates.
However, at later stages as the time-dependent term in the bound on epxloitability in Corollary~\ref{corollary:bandit} disappears, we observe that the experiments for larger $N$ converge to a better policy (i.e., one with lower exploitability) than the cases with smaller $N$ as the theory suggests.

Finally, comparing Figures~\ref{figure:bandit_exp_curves},\ref{figure:bandit_l2_curves}, we see that in certain experiments for some $N$ despite having a lower exploitability we observe a greater $\ell_2$ distance to MF-NE.
This likely due to fact that the non-vanishing bias term in exploitability and $\ell_2$ distance have differing dependence on problem parameters such as $L, \lambda, K$.
Therefore, for instance for the KL potential payoffs, we observe a greater mean $\ell_2$ distance to MF-NE but a smaller exploitability for $N=1000$.

\subsection{Comparison with Multiplicative Weight Updates}\label{appendix:comparison_omd}

The SMFG problem of interactive learning without communications and bandit feedback has not been studied in the literature previously, and there is no alternative algorithm with theoretical guarantees to the best of our knowledge in this setting.
However, in Figure~\ref{figure:fig_mwu} we provide comparison with a heuristic extension of the OMD algorithm proposed by \citet{perolat2022scaling} which was analyzed in the case of an infinite agent game.
In general, independent learning with $N$ agents introduces unique challenges that cause the algorithm to cycle or diverge.

\begin{figure}[h]
\begin{tabular}{cc}
 \includegraphics[width=0.45\linewidth]{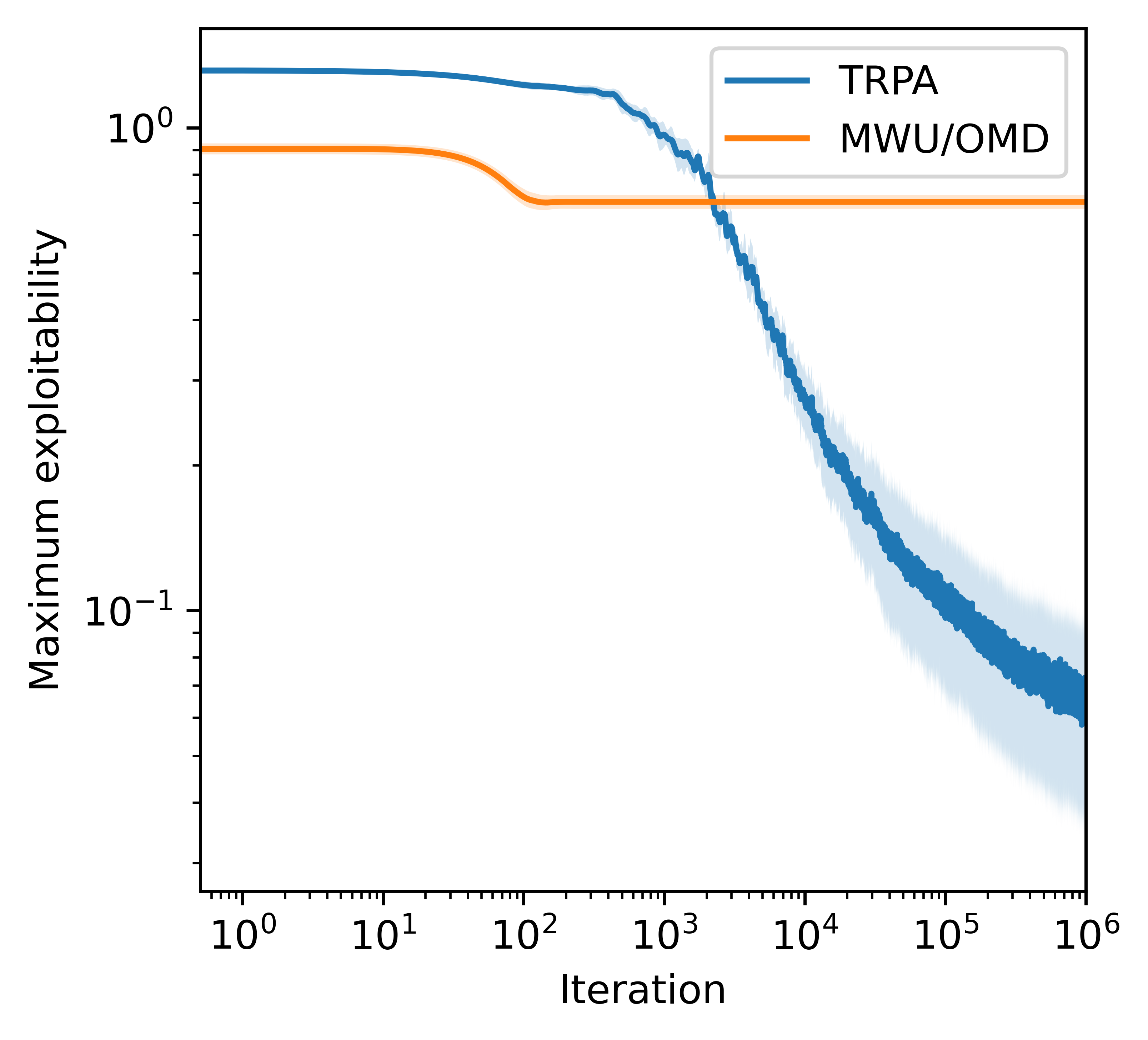} &   \includegraphics[width=0.45\linewidth]{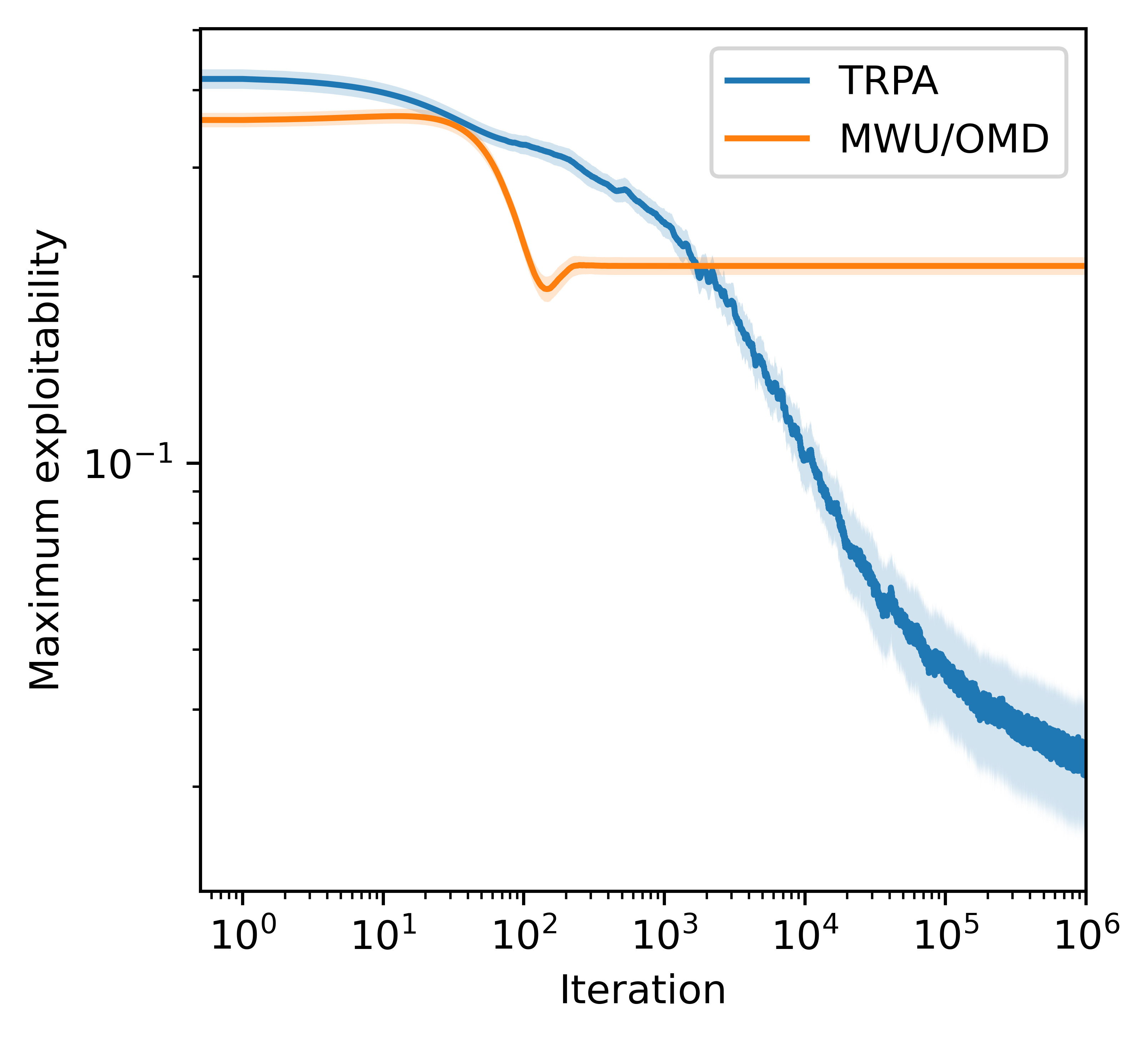} \\
 \includegraphics[width=0.45\linewidth]{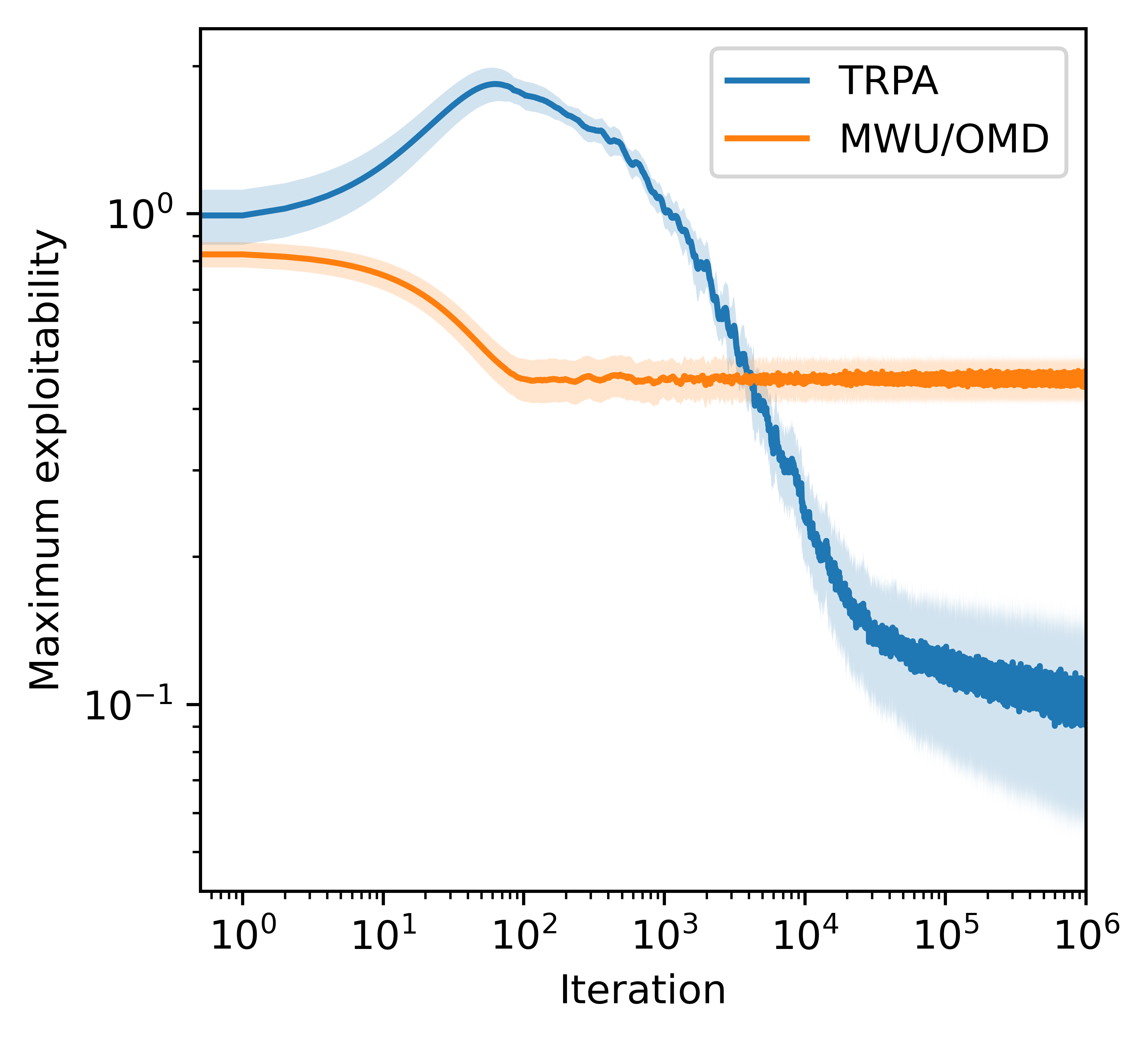} &   \includegraphics[width=0.45\linewidth]{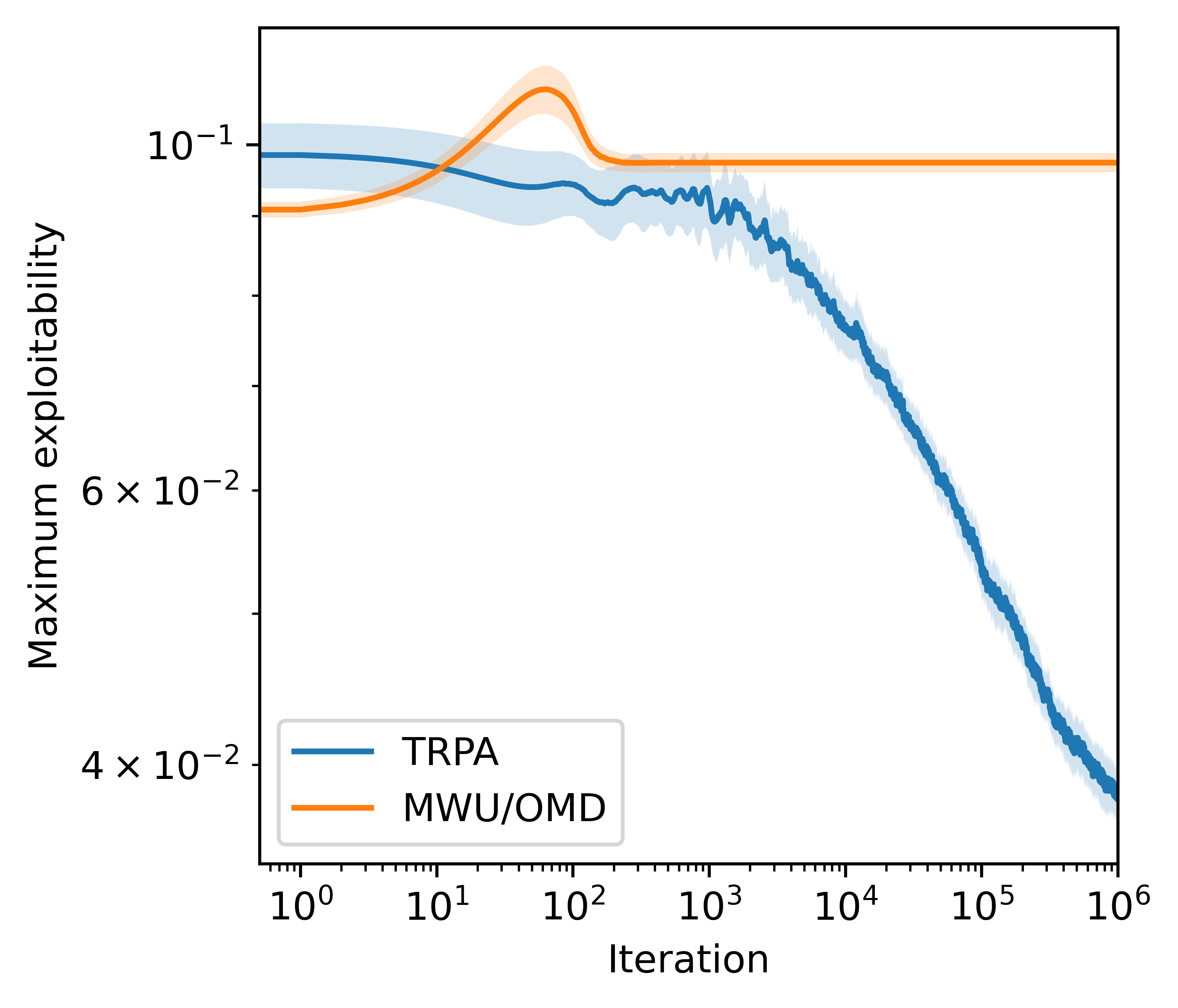}
\end{tabular}
\caption{
Comparison of maximum exploitability between our TRPA-Bandit and multiplicative weight updates (equivalently OMD of \citet{perolat2022scaling}) in benchmarks \textsc{linear}, \textsc{bb}, \textsc{kl}, \textsc{exp} (left to right). $N=1000$ agents, averaged over 10 experiments.
}
\label{figure:fig_mwu}
\end{figure}

\subsection{Experiments on Traffic Congestion}\label{appendix:exp:traffic}

\textbf{UTD19 and closed-loop sensors.}
The UTD19 dataset contains measurements by closed-loop sensors which report the fraction of the time a particular section of the route remains occupied (i.e., a car is located in between sensors placed on the sides).
The data consists of measurements every 5 minutes, from various sensors across 41 European cities.
The dataset contains 2 weeks of data collected by sensors placed around Zurich.

\textbf{Payoff models.}
We fit kernelized ridge regression models to model the flow as a function of occupancy using the UTD19 dataset.
We use an RBF kernel and a regularization of $\alpha=1.0$ for all models.
The dependence of traffic speed on route occupancy in two sample paths in the dataset can be seen in Figure~\ref{figure:utd_models_occupancy_speed}-(a, b).
The measurements motivate the monotonicity assumption as both the original data and the fitted regression model, increased occupancy leads to increased travel time, at least, when ignoring interactions.
We compute a proxy for the travel velocity using the flow and occupancy measurements on each route, and a scaling factor $c_{\text{dist}}$ due to varying lengths of each route, leading to the estimated travel time
\begin{align*}
    T_{travel} &= c_{\text{dist}} \frac{\text{flow}}{\text{occupancy}}.
\end{align*}
We use $-T_{travel}$ as the reward for each agent.

\begin{figure}[h]
\centering
\begin{tabular}{ccc} 
\includegraphics[width=0.31\linewidth]{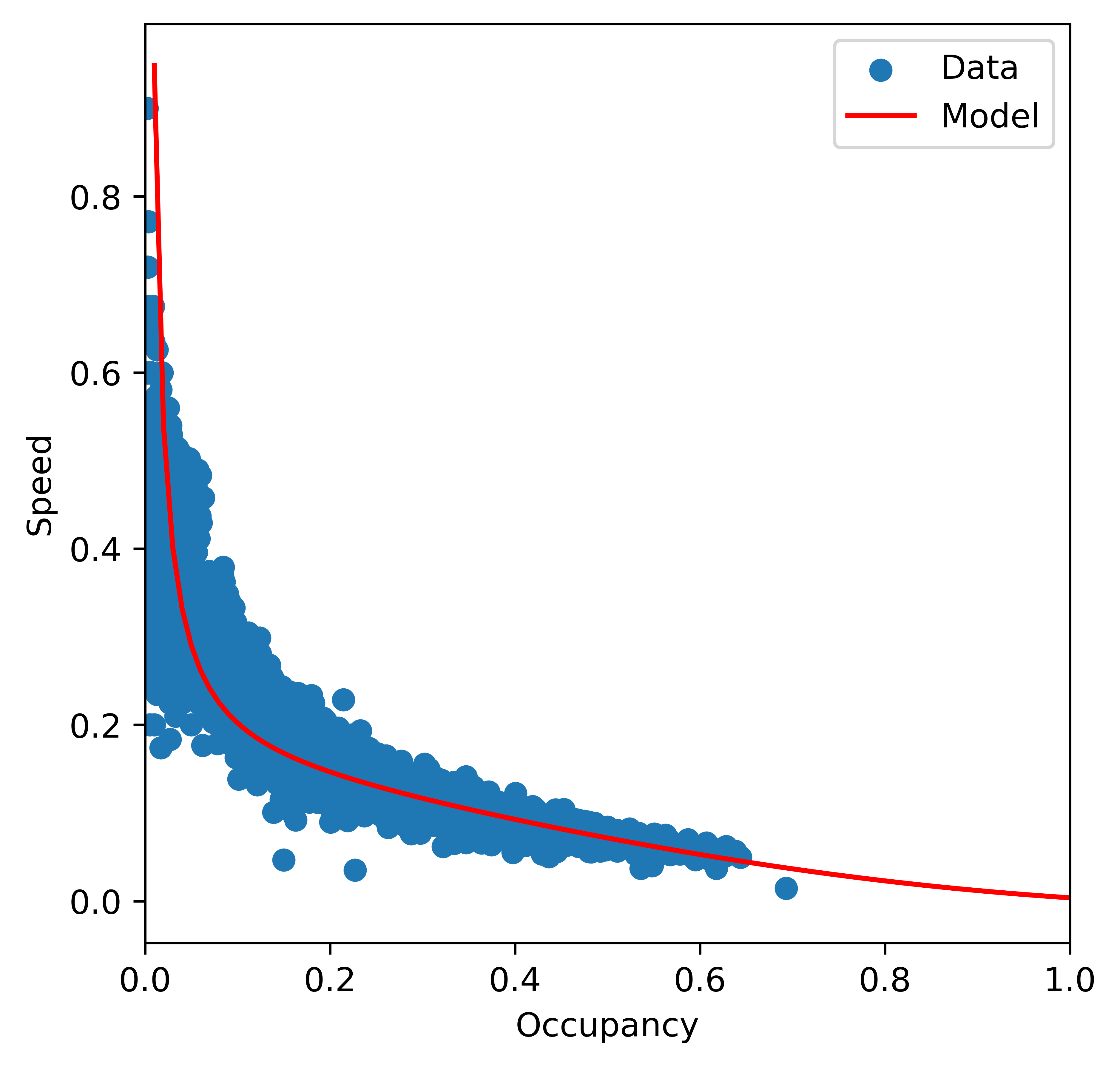} & \includegraphics[width=0.31\linewidth]{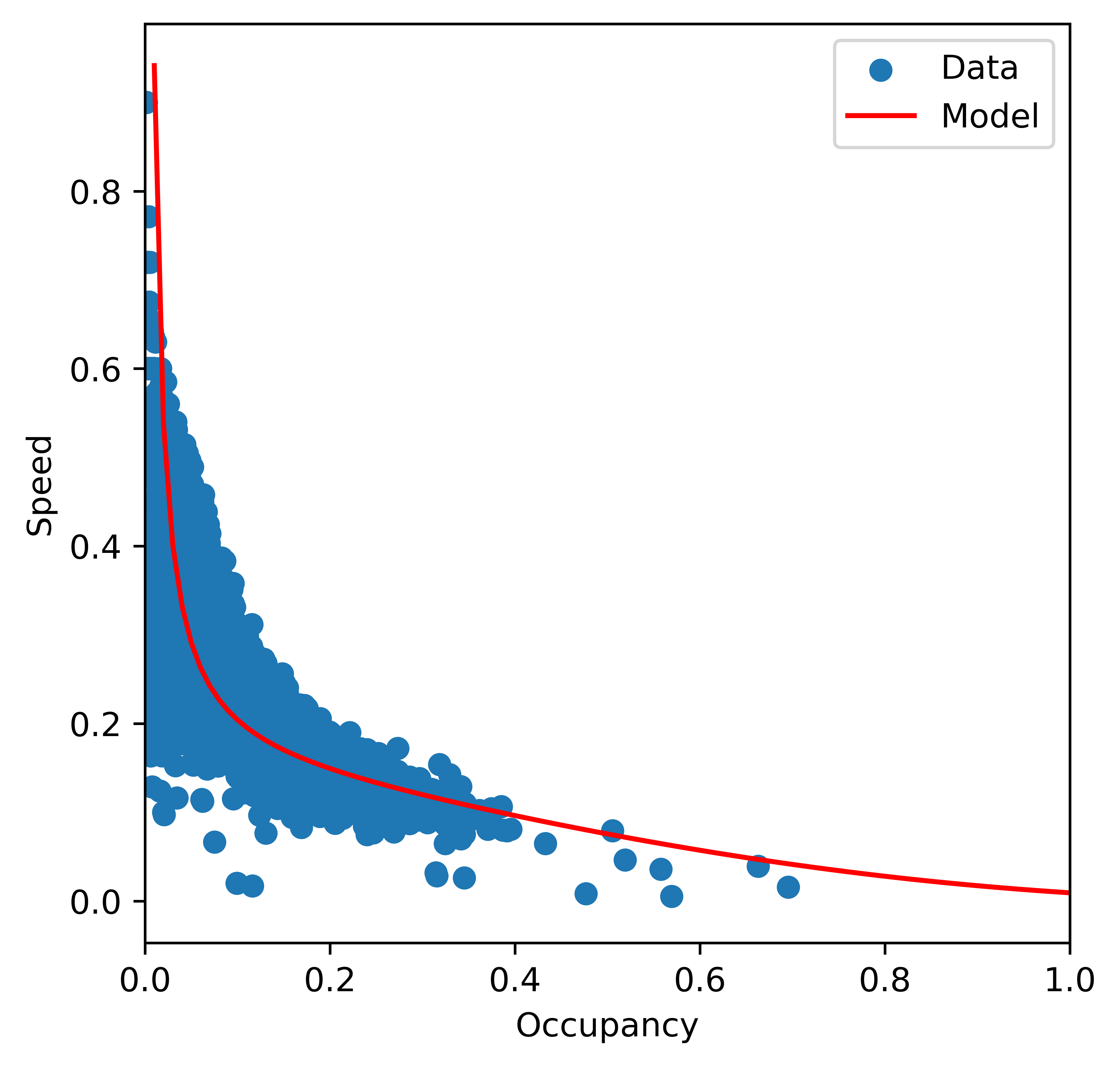} & \includegraphics[width=0.3\linewidth]{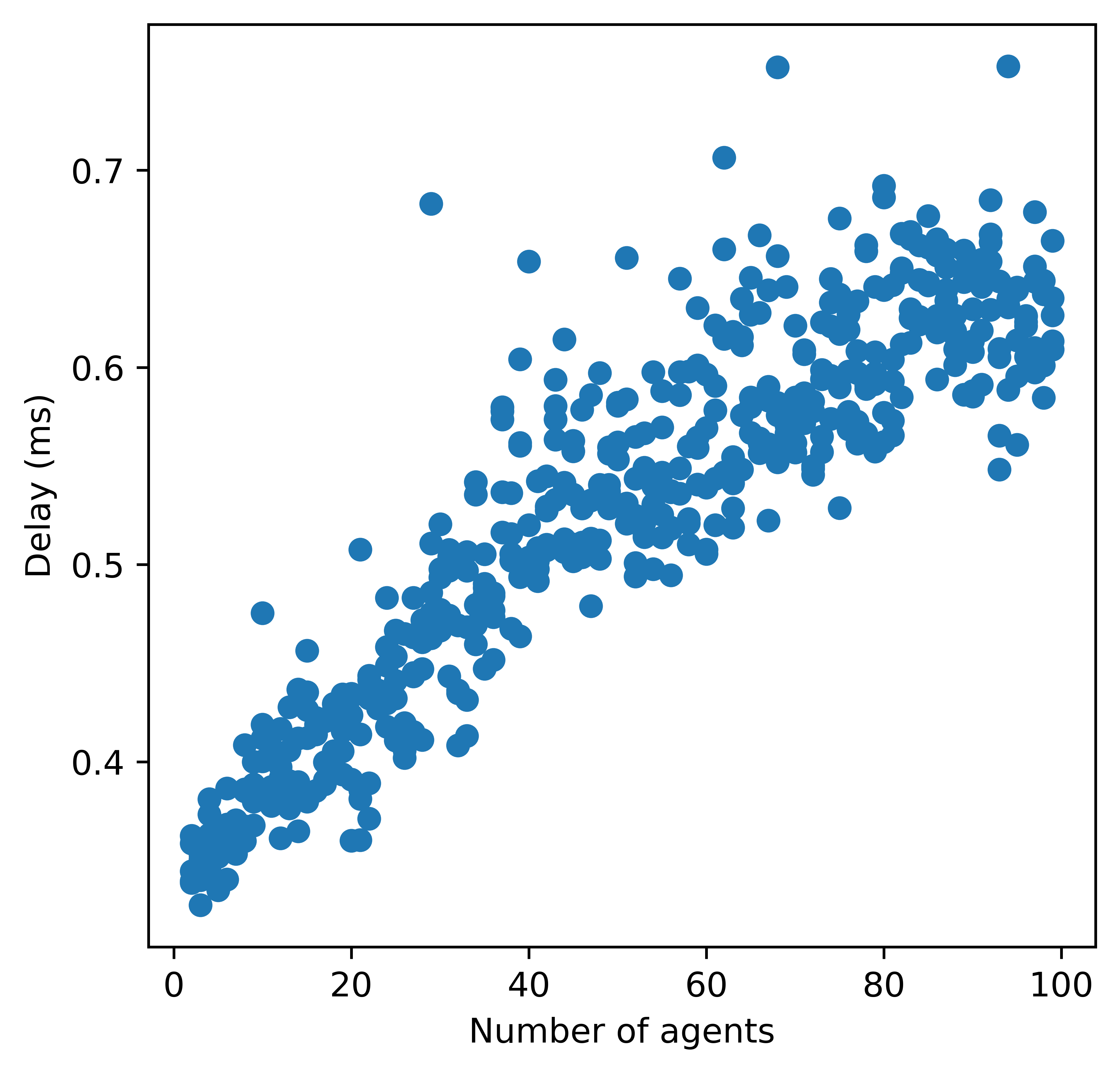} \\
(a) & (b) & (c)
\end{tabular}
\caption{
(a, b) Data and fitted models of occupancy vs. speed on two different urban roads (sensors K324D11 and K418D12) from the UTD19 dataset.
The red line indicates the fitted model predictions of the kernelized regression model.
(c) Measured ping times of a selected Tor entry node as a function of parallel requests (i.e., number of agents accessing the node). }
\label{figure:utd_models_occupancy_speed}
\end{figure}

\subsection{Experiments on Network Access}\label{appendix:exp:tor}

For the Tor network access experiment, we randomly chose 5 entry guard servers (the complete list available publicly \cite{torentryguards}) in various geographic locations, among the servers that have the longest recorded uptime.
To simulate access to each server, we ping each node 5 consecutive times and average the delays to compute the cost.
As expected, due to varying bandwidths/computational power, each server has different sensitivities to load in terms of delay.
We plot measured ping times vs the number of agents simultaneously accessing a particular node in Figure~\ref{figure:utd_models_occupancy_speed}-(c).
The ping times were measured to be stochastic but clearly sensitive to occupancy, motivating a monotone payoff operator.

We use parameters $\tau = 0.01, \varepsilon = 0.3$ for the experiments in this section.
The arbitrary choice is due to the fact that in the presence of external agents in the game that do not use TRPA-Bandit (in this case, thousands of other users accessing the Tor network), the theoretically optimal parameters implied by Corollary~\ref{corollary:bandit} can not be used.
While more realistic simulations that are also closer to the theory could be run by keeping $N$ larger and simulating a Tor access rather than simple pings, we refrain from this in order to minimize the footprint of our experiments on the Tor network.

\end{document}